\newtheorem{theorem}{Theorem}[section]
\newtheorem{definition}[theorem]{Definition}
\newtheorem{lemma}[theorem]{Lemma}
\newtheorem{corollary}[theorem]{Corollary}
\newtheorem{proposition}[theorem]{Proposition}
\newtheorem{remark}[theorem]{Remark}
\theoremstyle{definition}
\newtheorem{example}[theorem]{Example}
\begin{document}
\title[Rearrangement invariant optimal range for Hardy type operators]{Rearrangement invariant optimal range for Hardy type operators}
\author{Javier Soria}
\address{Department of  Applied Mathematics and Analysis, University of Barcelona, Gran Via 585, E-08007 Barcelona, Spain.}
\email{soria@ub.edu}

\author{Pedro Tradacete}
\address{Mathematics Department, University Carlos III de Madrid, E-28911 Legan\'es, Madrid, Spain.}
\email{ptradace@math.uc3m.es}

\thanks{Both authors have been partially supported by the Spanish Government Grant MTM2010-14946. The second author has also been partially supported by the Spanish Government Grant MTM2012-31286 and Grupo UCM 910346.}

\subjclass[2010]{26D10, 46E30}
\keywords{Rearrangement invariant spaces; restricted type; Hardy operator; optimal range; optimal domain}

\begin{abstract}
We characterize, in the context of rearrangement invariant spaces, the optimal range space for a class of monotone operators related to the Hardy operator. The connection  between optimal range and optimal domain for these operators is carefully analyzed.
\end{abstract}
\maketitle

\thispagestyle{empty}

\section{Introduction}

The main goal on this work is the study of optimal ranges for operators related to the Hardy operator
\begin{equation}\label{haop}
Sf(t)=\frac1t\int_0^tf(r)dr
\end{equation}
within the category of  rearrangement invariant spaces, r.i.\ for short  (see Definition~\ref{ridef}).

In recent years, the characterization of optimal domains, or range spaces, has been considered for many different kinds of operators and function spaces, with applications to Sobolev embeddings \cite{Edmunds-Kerman-Pick, CuRi}; classical results in harmonic analysis like the Hausdorff-Young inequality or Fourier multipliers \cite{Bennett,MoRi,MoRi2}; vector measures \cite{Okada-Ricker-Sanchez}, etc. We recall that if $T:X\rightarrow Y$ is bounded, the \textit{optimal domain}  is the largest space $Z$ (usually within a class of spaces with a priori conditions)  for which $T:Z\rightarrow Y$ is still bounded. Similarly, the \textit{optimal range} is the smallest space $Z$ such that $T:X\rightarrow Z$ is bounded. It is known that, in general, these spaces may not be well-defined. For example, for the particular case of the Hardy operator $S$, as in \eqref{haop}, it was proved in \cite{Delgado-Soria} that,  given any r.i. space, its optimal domain is never an r.i.\  space. In \cite{Nekvinda-Pick1} the optimal domain and range spaces for $L^p$ (among solid Banach spaces) were described. The same kind of results, but for optimal domains that are r.i., were considered in \cite{Delgado}.

Our aim in this work is to study the rearrangement invariant optimal range for some Hardy type operators, which will be defined in Section~\ref{sec claseH}. The main results will be proved in Section~\ref{sec2}; in fact, Theorems~\ref{optimal1} and \ref{optimal2} provide two different approaches in order to give explicit constructions of such spaces. After this more general discussion, in Section~\ref{sec3} our analysis will focus on a particular example: the Hardy operator $S$. In particular, special attention will be given to the r.i. optimal range for the Hardy operator defined on Lorentz and Marcinkiewicz spaces. 

Section~\ref{sec4} is devoted to the interesting relation between optimal range and optimal domain. Here we will study the behavior of iterating the constructions of optimal range and optimal domain. This will allow us to define two new functors on the category of r.i. spaces, namely $\mathcal{R}_X$ and $\mathcal{D}_X$ which correspond to the r.i. optimal range associated to the r.i. optimal domain for the Hardy operator on a space $X$, and vice versa. The basic properties of these functors will also be analyzed.

Our motivation  stems partly from several results on restricted type spaces given in  \cite{Soria-Tradacete}. In that paper, the r.i. optimal range for the Hardy operator on a Lorentz space $\Lambda_\varphi$ plays an important role in the characterization of the new class of spaces $R(X)$ (see in particular \cite[Corollary~2.4]{Soria-Tradacete}). In Section \ref{sec5}, as an application of the results given in previous sections, we will elaborate on the relation between the class of spaces $R(X)$ and the r.i. optimal range and domain. 
\medskip

\section{Terminology}\label{sec claseH}

In what follows, we will use the standard definitions and notations given in   \cite{Bennett-Sharpley}. Recall that given a measurable function $f$, $f^*$ denotes its decreasing rearrangement on $\mathbb{R}^+$.

\begin{definition}\label{ridef}
A rearrangement invariant Banach function space (r.i.) $X$ over a measure space $(\Omega,\Sigma,\mu)$ is the collection of all measurable functions $f:\Omega\rightarrow\mathbb{R}$ for which  $\|f\|_X<\infty$, where $\|\cdot\|_X$ satisfies the following properties:
\begin{itemize}
\item[(P1)] $\|\cdot\|_X$ is a norm;
\item[(P2)] $0\leq f^*\leq g^*\Rightarrow\|f\|_X\leq\|g\|_X$;
\item[(P3)] $0\leq f_n\uparrow f$ $\mu$-a.e. $\Rightarrow \|f_n\|_X\uparrow\|f\|_X$ (Fatou property);
\item[(P4)] $E\in\Sigma$, $\mu(E)<\infty\Rightarrow\|\chi_E\|_X<\infty$;
\item[(P5)] $\mu(E)<\infty\Rightarrow\int_E |f|d\mu\leq C_E\|f\|_X$, for some $C_E<\infty$ independent of $f$.
\end{itemize}

If a space $X$ satisfies all the above properties, except that Fatou property {\rm(P3)} is replaced by 
\begin{equation*}\label{P3*}\tag{P3$^*$}
f_n\in X,\  \sum_{n=1}^\infty\|f_n\|_X<\infty\Rightarrow\sum_{n=1}^\infty f_n=f\in X\hbox{ and } \Vert f\Vert_X\le\sum_{n=1}^\infty \Vert f_n\Vert_X<\infty,
\end{equation*}
then we say that $X$ is a rearrangement invariant Riesz-Fischer space.
\end{definition}

It is well known that \eqref{P3*}, which is weaker than (P3), holds for r.i.\ spaces \cite[Theorem I.1.6]{Bennett-Sharpley}, and that this condition implies completeness; i.e.,    they  are in fact Banach spaces. 

Given two measurable functions $f$ and $g$ we will write $f\prec g$ if for every $t>0$ it holds that
$$
\int_0^tf^{*}(s)\,ds\leq \int_0^tg^{*}(s)\,ds.
$$
This   is  known as the Hardy-Littlewood-Pólya relation.

If a rearrangement invariant Riesz-Fischer space also satisfies  that
\begin{equation*}\label{P2*}\tag{P2$^*$}
f\prec g\,\Rightarrow\|f\|_X \leq \|g\|_X,
\end{equation*}
then $X$ is called \emph{rearrangement invariant monotone Riesz-Fischer space}. Again, \eqref{P2*} always holds for r.i.\ spaces \cite[Theorem II.4.6]{Bennett-Sharpley}.

Given an r.i.\ space $X$, we denote by $X'$ its \textit{associate space,} defined as
$$
\Vert f\Vert_{X'}=\sup\bigg\{\int_{\Omega}|fg|\,d\mu:g\in X,\Vert g\Vert_X\le 1\bigg\}.
$$
Also, if $T:X\rightarrow Y$ is bounded, then we denote by $T':Y'\rightarrow X'$ the \textit{associate operator.}
The \textit{fundamental function} of $X$ is given by
$$
\varphi_X(t)=\Vert \chi_E\Vert_X,
$$
where  $\mu(E)=t$ and $ \chi_E$ denotes the characteristic function of the set $E$. The fundamental functions of $X$ and $X'$ satisfy that, for each $t>0$,
$$
\varphi_X(t)\varphi_{X'}(t)=t.
$$

As usual, we will use the notation $f^{**}(t)=Sf^*(t)=t^{-1}\int_0^t f^*(s)\,ds.$ For simplicity, we are going to work in the case $\Omega=\mathbb R^+$ and $d\mu$ will be the Lebesgue measure. The reader is referred to \cite{Bennett-Sharpley} for further notions concerning rearrangement invariant spaces.

\begin{definition}  $\varphi:\mathbb R^+\rightarrow\mathbb R^+$ is a quasiconcave function if $\varphi(t)$ is increasing and $\varphi(t)/t$ is decreasing.

Given a quasiconcave function $\varphi$, we define the Lorentz space
$\Lambda_\varphi$ as the class of those functions $f$ for which
$$
\Vert f\Vert_{\Lambda_\varphi}=\int_0^\infty f^*(t)\,d\varphi(t)<\infty.
$$

Similarly, the Marcinkiewicz space $M_\varphi$ is defined by the condition
$$
\Vert f\Vert_{M_\varphi}=\sup_{t>0} f^{**}(t) \varphi(t)<\infty.
$$
\end{definition}

It is well-known \cite[Theorem II.5.13]{Bennett-Sharpley} that if $X$ is an r.i.\ space  then, its fundamental function  $\varphi_X$ is quasiconcave, and
\begin{equation*}
\Lambda_{\varphi_X}\subset X\subset M_{\varphi_X}.
\end{equation*}

Motivated by   the Hardy operator \eqref{haop}, we are going  to specify some properties for operators between rearrangement invariant spaces, which will be needed to prove our main results.

\begin{definition}
Let $T:L^1\cap L^\infty\rightarrow L^1+L^\infty$ be an operator. 
\begin{enumerate}
\item[({\it i})] $T$ is decreasing if it maps positive decreasing functions to positive decreasing functions.
\item[({\it ii})] $T$ is HLP-monotone if $Tf \prec Tf^*$.
\item[({\it iii})] $T$ satisfies a restricted lower estimate, if $S\chi_{(0,t)}\leq ST\chi_{(0,t)}$, for any $t>0$.
\end{enumerate}
An operator $T$ is of class $\mathcal{H}$ whenever it satisfies (i), (ii) and (iii).

\end{definition}

Note that the Hardy operator $S$ is of class $\mathcal{H}$ (see \cite[Proposition II.3.2]{Bennett-Sharpley}). 

\begin{lemma}\label{associate operator}
Let $T$ be an operator such that $T\chi_{(0,t)}$ is positive decreasing for every $t>0$. Then, 
\begin{enumerate}
\item[({\it i})] $T'$ is HLP-monotone.
\item[({\it ii})] {If $T$ satisfies a restricted lower estimate, then for every decreasing function $g$ we have
$$
g\prec T'g.
$$
In particular, $T'$ also satisfies a restricted lower estimate.}
\end{enumerate}
\end{lemma}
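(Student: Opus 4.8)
The plan is to exploit duality systematically, translating each hypothesis on $T$ into a statement about integrals against $\chi_{(0,t)}$ and then unfolding the definition of $T'$.

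For part (i), I would start from the characterization of the HLP-monotonicity $T'g \prec (T'g)^*$. Since for any function $h$ one has $\int_0^t h^*(s)\,ds = \sup\{\int_E |h|\,d\mu : \mu(E)=t\}$, and since $\int_0^t h(s)\,ds \le \int_0^t h^*(s)\,ds$ always, it suffices to bound $\int_0^t (T'g)^*(s)\,ds$ from above by $\int_0^t (T'g^*)^*(s)\,ds$. First I would observe that $\int_0^t (T'g)(s)\,ds = \int_{\mathbb R^+} (T'g)\,\chi_{(0,t)}\,d\mu = \int_{\mathbb R^+} g\,(T\chi_{(0,t)})\,d\mu$ by the definition of the associate operator. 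Now the key input is that $T\chi_{(0,t)}$ is positive and decreasing, so by the Hardy–Littlewood inequality $\int g\,(T\chi_{(0,t)})\,d\mu \le \int_0^\infty g^*(s)\,(T\chi_{(0,t)})(s)\,ds = \int_{\mathbb R^+} g^*\,(T\chi_{(0,t)})\,d\mu = \int_0^t (T'g^*)(s)\,ds$. Taking the supremum over rearrangements (i.e. replacing $g$ by its rearrangement over an arbitrary measurable set of measure $t$, which only changes the left side and not the structure of the argument) upgrades this to the statement $\int_0^t(T'g)^*\le\int_0^t(T'g^*)^*$, which is exactly $T'g\prec T'g^*$.

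For part (ii), the restricted lower estimate $S\chi_{(0,t)} \le ST\chi_{(0,t)}$ says precisely that $\int_0^s \chi_{(0,t)}(r)\,dr \le \int_0^s (T\chi_{(0,t)})(r)\,dr$ for all $s>0$, i.e. $\chi_{(0,t)} \prec T\chi_{(0,t)}$. Given a decreasing $g$, I would compute $\int_0^t (T'g)(s)\,ds = \int_{\mathbb R^+} g\,(T\chi_{(0,t)})\,d\mu$ as above; since both $g$ and $T\chi_{(0,t)}$ are nonnegative and $g$ is decreasing, and since $\chi_{(0,t)}\prec T\chi_{(0,t)}$, the HLP-type inequality $\int g\,h_1\,d\mu \le \int g\,h_2\,d\mu$ whenever $h_1\prec h_2$ and $g$ is decreasing (a standard consequence, e.g. via Abel summation / integration by parts against $-dg$, or \cite[Theorem II.4.6]{Bennett-Sharpley}) gives $\int_{\mathbb R^+} g\,\chi_{(0,t)}\,d\mu \le \int_{\mathbb R^+} g\,(T\chi_{(0,t)})\,d\mu$, that is, $\int_0^t g(s)\,ds \le \int_0^t (T'g)(s)\,ds$. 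Since $g$ is decreasing, $g = g^*$ on $(0,\infty)$ up to null sets, so the left side equals $\int_0^t g^*$; and the right side is at most $\int_0^t (T'g)^*$, giving $g \prec T'g$. Finally, applying this with $g = \chi_{(0,t)}$ yields $S\chi_{(0,t)} = \int_0^{\cdot}\chi_{(0,t)} \prec \int_0^{\cdot} T'\chi_{(0,t)}$; but one must still pass from $\chi_{(0,t)}\prec T'\chi_{(0,t)}$ to the pointwise inequality $S\chi_{(0,t)}\le ST'\chi_{(0,t)}$ — this is immediate since $\chi_{(0,t)}$ is already decreasing, so $\int_0^s\chi_{(0,t)} = \int_0^s (\chi_{(0,t)})^* \le \int_0^s (T'\chi_{(0,t)})^* $ is not quite what is needed; instead I use that $\chi_{(0,t)}\prec T'\chi_{(0,t)}$ directly means $\int_0^s\chi_{(0,t)}(r)dr\le\int_0^s(T'\chi_{(0,t)})^*(r)dr$, and since we want $\le\int_0^s T'\chi_{(0,t)}(r)dr$, we need $T'\chi_{(0,t)}$ decreasing.

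The main obstacle I anticipate is exactly this last point: the cleanest form of the restricted lower estimate for $T'$ requires knowing that $T'\chi_{(0,t)}$ is itself decreasing (or replacing "restricted lower estimate" by the weaker HLP-relation formulation $\chi_{(0,t)}\prec T'\chi_{(0,t)}$, which is what part (ii) actually delivers and which is all that is used downstream). I would resolve this either by noting that the intended reading of "restricted lower estimate" in this corollary is the HLP-relation $S\chi_{(0,t)} \prec ST'\chi_{(0,t)}$ — equivalently $\chi_{(0,t)}\prec T'\chi_{(0,t)}$ — which follows verbatim from the displayed inequality $g\prec T'g$ applied to $g=\chi_{(0,t)}$; or, if genuine monotonicity of $T'\chi_{(0,t)}$ is required, by an additional hypothesis or observation (it holds for $T=S$ since $S' $ maps $\chi_{(0,t)}$ to $\min(1,t/s)$-type functions). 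The rest of the argument is routine manipulation of the Hardy–Littlewood inequality and the definition of $T'$.
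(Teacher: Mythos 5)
Your core computations are exactly the paper's: for (i), write $\int_0^t(T'g)(s)\,ds=\int_0^\infty g(s)\,(T\chi_{(0,t)})(s)\,ds$ and apply the Hardy--Littlewood inequality, using that $T\chi_{(0,t)}$ is positive decreasing and hence equal to its own rearrangement; for (ii), read the restricted lower estimate as $\chi_{(0,t)}\prec T\chi_{(0,t)}$ and apply Hardy's lemma \cite[Proposition II.3.6]{Bennett-Sharpley} against the decreasing function $g$. Two of the extras you added, however, deserve comment.

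First, in (i) the ``upgrade by taking the supremum over rearrangements'' does not work as stated. One has $\int_0^t(T'g)^*(s)\,ds=\sup_{|E|=t}\int_E T'g=\sup_{|E|=t}\int g\,T\chi_E$, and the hypothesis only makes $T\chi_{(0,t)}$ decreasing, not $T\chi_E$ for an arbitrary measurable $E$ of measure $t$; rearranging $g$ instead is useless, since that only affects $g^*$, which is unchanged. The paper does not attempt this upgrade: it proves $\int_0^t T'g\,ds\le\int_0^t T'g^*\,ds$ for all $t$ and records that as $T'g\prec T'g^*$ (under the paper's literal definition of $\prec$ via decreasing rearrangements this identification is itself a slight abuse, but it is the form in which HLP-monotonicity is used later, where $Tg^*$ is moreover decreasing). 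So either drop the upgrade and state the unstarred inequality, as the paper does, or supply a genuinely new argument.

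Second, the obstacle you anticipate at the end of (ii) is not there. What the argument actually yields is the unstarred inequality $\int_0^t g(s)\,ds\le\int_0^t(T'g)(s)\,ds$ for every $t>0$ and every positive decreasing $g$. Specializing to $g=\chi_{(0,u)}$ gives $\int_0^t\chi_{(0,u)}(s)\,ds\le\int_0^t T'\chi_{(0,u)}(s)\,ds$ for all $t$, which upon dividing by $t$ is literally $S\chi_{(0,u)}\le ST'\chi_{(0,u)}$, i.e., the restricted lower estimate for $T'$ exactly as defined. No monotonicity of $T'\chi_{(0,u)}$ and no passage through decreasing rearrangements is required; the difficulty only appears because you first converted the conclusion into the relation $g\prec T'g$ and then tried to convert it back.
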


\begin{proof}
\textit{(i)} For any $t>0$, and any positive function $f$, since $T\chi_{(0,t)}$ is decreasing, we have
\begin{align*}
\int_0^t(T'f)(s) \,ds&=\int_0^\infty f(s)(T\chi_{(0,t)})(s)\,ds\leq\int_0^\infty f^*(s)(T\chi_{(0,t)})^*(s)\,ds\\
&=\int_0^\infty f^*(s) (T\chi_{(0,t)})(s)\,ds=\int_0^t (T'f^*)(s) \,ds.
\end{align*}
Thus, $T'f \prec T'f^*$.

\textit{(ii)}  Suppose that for every $t>0$ we have
$$
\chi_{(0,t)} \prec T\chi_{(0,t)}.
$$
By \cite[Proposition II.3.6]{Bennett-Sharpley}, for every positive decreasing function $g$ on $\mathbb R^+$ we obtain
$$
\int_0^tg(s)\,ds=\int_0^\infty g(s)\chi_{(0,t)}(s)\,ds\leq\int_0^\infty g(s)(T\chi_{(0,t)})(s)\,ds=\int_0^t(T'g)(s)\,ds.
$$
Since this holds for every $t>0$, we get $g \prec T'g$.
\end{proof}

Let us see now some examples of operators within the class $\mathcal{H}$, apart from the Hardy operator.

\begin{example}
{\bf Hardy adjoint operator $S'$.} Let us recall the adjoint of the Hardy operator in $\mathbb{R}^+$:
\begin{equation*}
S'f(t)=\int_t^\infty\frac{f(s)}{s}ds.
\end{equation*}
Observe that for $s,t>0$
$$
S'\chi_{(0,t)}(s) = \chi_{(0,t)}(s)\log\Big(\frac{t}{s}\Big)=\log^+\Big(\frac{t}{s}\Big).
$$

Note that $S'$ is decreasing since $S'f$ is positive decreasing whenever $f$ is positive. By Lemma~\ref{associate operator} and the fact that $S$ is of class $\mathcal{H}$, we have that $S'$ is also HLP-monotone and satisfies a restricted lower estimate. Hence, $S'$ is of class $\mathcal{H}$. Observe that, in general, $S'f\not\le S'f^*$, for $f\ge0$ (take for instance $f=\chi_{(1,2)}$.)
\end{example}

\begin{example}
{\bf Iterations.} The operators $S^n(S')^m$ for every $n,m\in\mathbb{N}$ belong to the class $\mathcal{H}$. Indeed, since both $S$ and $S'$ are decreasing, so is the composition $S^n(S')^m$. Now, in order to see that $S^n(S')^m$ is HLP-monotone, when $m\geq1$, given $f$ we have
$$
(S^nS'^mf)^{**}=S(S^nS'^mf)^*=S^{n+1}S'^mf=S^nS'^mSf\leq S^nS'^mSf^*=(S^nS'^mf^*)^{**}.
$$
And analogously, the same estimate holds for $m=0$. 

Finally, by iterating the following facts: 
$$
SS'\chi_{(0,t)}(s)=S\chi_{(0,t)}(s)+S'\chi_{(0,t)}(s)\geq S\chi_{(0,t)}(s)
$$ 
$m$ times, and
$$
SS\chi_{(0,t)}(s)\geq S\chi_{(0,t)}(s)
$$
$n$ times, we get that $S^n(S')^m$ satisfies a restricted lower estimate. Hence $S^n(S')^m$ is of class $\mathcal{H}$.
\end{example}

\begin{example}\label{raoo}
{\bf Rank one operators.} If $w\in L^1_{\rm loc}(\mathbb R^+)$ is a positive decreasing weight, such that $w(r)\ge r^{-1/2}/2$, then it is easy to show that the operator
$$
T_wf(t)=w(t)\int_0^\infty f(s)w(s)\,ds,
$$
satisfies that $T_w:\Lambda_W\rightarrow L^1+L^\infty$, with $W(t)=\int_0^tw(r)dr$, and $T_w$ is of class $\mathcal{H}$.
\end{example}

\begin{lemma}
Let $T$ and $U$ be operators of class $\mathcal H$. For any positive scalars $\alpha$, $\beta$ such that $\alpha+\beta\geq1$, we have that $\alpha T+\beta U$ is of class $\mathcal{H}$.
\end{lemma}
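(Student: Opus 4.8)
The plan is to verify separately the three defining conditions (i), (ii), (iii) of the class $\mathcal{H}$ for the operator $V:=\alpha T+\beta U$; the fact that $V$ maps $L^1\cap L^\infty$ into $L^1+L^\infty$ is automatic, since this is inherited by linear combinations of such operators. Condition (i) is immediate: if $f$ is positive and decreasing, then $Tf$ and $Uf$ are positive and decreasing because $T,U\in\mathcal{H}$, and a nonnegative linear combination of positive decreasing functions is again positive and decreasing, so $Vf=\alpha Tf+\beta Uf$ is positive and decreasing.

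For (iii) I would use the linearity of $S$ together with the restricted lower estimates for $T$ and $U$: for every $t>0$,
\[
SV\chi_{(0,t)}=\alpha\,ST\chi_{(0,t)}+\beta\,SU\chi_{(0,t)}\ge(\alpha+\beta)\,S\chi_{(0,t)}\ge S\chi_{(0,t)},
\]
where the middle inequality follows from $S\chi_{(0,t)}\le ST\chi_{(0,t)}$, $S\chi_{(0,t)}\le SU\chi_{(0,t)}$ together with $\alpha,\beta\ge0$, and the last one holds because $S\chi_{(0,t)}\ge0$ and $\alpha+\beta\ge1$. This is the only place where the hypothesis $\alpha+\beta\ge1$ is used, and it explains why the restriction is needed.

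The only step requiring a little care is the HLP-monotonicity (ii). Here I would invoke the sublinearity of the operator $h\mapsto\int_0^t h^*(s)\,ds$ (equivalently, $(g+h)^{**}\le g^{**}+h^{**}$; see \cite{Bennett-Sharpley}), which yields $\int_0^t(Vf)^*\le\alpha\int_0^t(Tf)^*+\beta\int_0^t(Uf)^*$ for each $t>0$. Since $T$ and $U$ are decreasing, $Vf^*=\alpha Tf^*+\beta Uf^*$ is positive and decreasing, so $(Vf^*)^*=Vf^*$; and from $Tf\prec Tf^*$, $Uf\prec Uf^*$ we get $\int_0^t(Tf)^*\le\int_0^t Tf^*$ and $\int_0^t(Uf)^*\le\int_0^t Uf^*$. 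Combining these,
\[
\int_0^t(Vf)^*(s)\,ds\le\alpha\int_0^t Tf^*(s)\,ds+\beta\int_0^t Uf^*(s)\,ds=\int_0^t Vf^*(s)\,ds
\]
for every $t>0$, that is, $Vf\prec Vf^*$. Hence $V=\alpha T+\beta U$ satisfies (i)--(iii) and belongs to $\mathcal{H}$. I do not anticipate a genuine obstacle here: the subadditivity of $\int_0^t h^*$ is the only non-formal ingredient, and it is classical.
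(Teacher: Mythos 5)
Your proof is correct and follows essentially the same route as the paper's: nonnegative combinations preserve the decreasing property, the subadditivity of $h\mapsto h^{**}$ together with the fact that $\alpha Tf^*+\beta Uf^*$ is decreasing gives HLP-monotonicity, and linearity of $S$ plus $\alpha+\beta\ge 1$ gives the restricted lower estimate. No gaps.
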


\begin{proof}
Clearly, if $T$ and $U$ are decreasing, then so is $\alpha T+\beta U$. Now, given a function $f$, for $\alpha,\,\beta\geq0$, it follows that
\begin{align*}
\big((\alpha T+\beta U)f\big)^{**}&=\big(\alpha Tf+\beta Uf\big)^{**}\leq(\alpha Tf)^{**}+(\beta Uf)^{**}\\
&\leq(\alpha Tf^*)^{**}+(\beta Uf^*)^{**}=\big((\alpha T+\beta U)f^*\big)^{**},
\end{align*}
where the last identity follows from the fact that $\alpha Tf^*$ and $\beta Uf^*$ are positive decreasing functions. Thus, $\alpha T+\beta U$ is HLP-monotone. 

Finally, since $T$ and $U$ satisfy a restricted lower estimate, for any $t>0$ and   $\alpha+\beta\geq1$ we have
$$
S\chi_{(0,t)}\leq\alpha S\chi_{(0,t)}+\beta S\chi_{(0,t)}\leq \alpha ST\chi_{(0,t)}+\beta SU\chi_{(0,t)}=S(\alpha T+\beta U)\chi_{(0,t)}.
$$
Therefore, $\alpha T+\beta U$ also satisfies a restricted lower estimate, and is of class $\mathcal H$.
\end{proof}

\section{Rearrangement invariant optimal range for operators of class $\mathcal H$}\label{sec2}

In this Section we will describe the optimal range spaces, within the classes of r.i.\ Banach function spaces and also r.i.\  monotone Riesz-Fischer spaces. These, a priori, different classes have been previously considered in \cite{Bennett}, and appear as the optimal range for the Hausdorff-Young inequality (there are examples showing that, in fact, they may not be equivalent.)

\begin{definition}
Given an r.i.\  space $X$ and an operator $T:X\rightarrow L^1+L^\infty$, let us define the spaces
$$
\mathfrak{R}[T,X]_0=\Big\{f\in L^1+L^\infty:f^{**}\leq (Tg^*)^{**},\textrm{ for some  }g\in X\Big\},
$$
endowed with the norm
$$
\|f\|_{\mathfrak{R}[T,X]_0} = \inf\Big\{\|g\|_X: f^{**}\leq (Tg^*)^{**}\Big\},
$$
and
$$
\mathfrak{R}[T,X]=\bigg\{f\in L^1+L^\infty:\exists\lambda>0,\textrm{ such that }\forall g \downarrow \, \int_0^\infty f^*(t)g(t)\,dt\leq\lambda\|T' g\|_{X'}\bigg\},
$$
with the norm
$$
\|f\|_{\mathfrak{R}[T,X]}=\inf\bigg\{\lambda>0:\forall g\downarrow \,\int_0^\infty f^*(t)g(t)\,dt\leq\lambda\|T' g\|_{X'}\bigg\}=\sup_{g\downarrow}\frac{\int_0^\infty f^*(t)g(t)\,dt}{\|T' g\|_{X'}}.
$$
\end{definition}

\begin{theorem}\label{optimal1}
Let $X$ be an r.i.\  space and $T:X\rightarrow L^1+L^\infty$ be an operator of class $\mathcal{H}$. Then, the space $\mathfrak{R}[T,X]$ is the  r.i.\ optimal   range for the operator $T$ defined on $X$.
\end{theorem}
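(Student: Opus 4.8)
The plan is to verify two things: first, that $T:X\to\mathfrak{R}[T,X]$ is bounded with $\mathfrak{R}[T,X]$ an r.i.\ space; and second, that it is the \emph{smallest} such r.i.\ space, i.e.\ if $T:X\to Y$ is bounded and $Y$ is r.i., then $\mathfrak{R}[T,X]\subset Y$ continuously. For the first part, I would start by checking that $\|\cdot\|_{\mathfrak{R}[T,X]}$ is a genuine r.i.\ norm. The functional is manifestly a supremum of quantities depending only on $f^*$, so (P2) is immediate, as is the triangle inequality (using $( f+g)^*\prec f^*+g^*$, i.e.\ $\int_0^t(f+g)^*\le\int_0^t f^*+\int_0^t g^*$, together with the fact that each competitor $g\downarrow$ integrates against the rearrangement via the Hardy--Littlewood--P\'olya relation — here I would rewrite $\int_0^\infty f^*g=\sup\{\int_0^\infty f^* h: h\prec g, h\downarrow\}$ if needed, but more directly $\int_0^\infty (f+g)^* h\le \int_0^\infty f^* h+\int_0^\infty g^* h$ for $h$ decreasing by a standard level-set argument). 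Positive homogeneity is clear, and the Fatou-type property (P3) follows from monotone convergence inside the supremum. Properties (P4) and (P5) require that $\chi_{(0,1)}\in\mathfrak{R}[T,X]$ and that $\mathfrak{R}[T,X]\hookrightarrow L^1+L^\infty$; for the latter I would bound $\int_0^1 f^*\le \|f\|_{\mathfrak{R}[T,X]}\|T'\chi_{(0,1)}\|_{X'}$, using that $\chi_{(0,1)}$ is a legitimate decreasing test function and that $\|T'\chi_{(0,1)}\|_{X'}<\infty$ because $T'$ maps into $X'$ (dual to the hypothesis that $T$ maps $X$ into $L^1+L^\infty$, with $L^1\cap L^\infty$ dense enough — this is where I would lean on $T$ being defined on $L^1\cap L^\infty\to L^1+L^\infty$ and the associate operator being well defined).

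Next, the boundedness $T:X\to\mathfrak{R}[T,X]$. Here the class $\mathcal{H}$ enters decisively. Given $f\in X$, I must show $\|Tf\|_{\mathfrak{R}[T,X]}\le C\|f\|_X$, i.e.\ for every $g\downarrow$,
$$
\int_0^\infty (Tf)^*(t)g(t)\,dt\le C\|f\|_X\|T'g\|_{X'}.
$$
Since $T$ is HLP-monotone, $Tf\prec Tf^*$, and since $T$ is decreasing, $Tf^*$ is itself decreasing, so $(Tf)^{**}\le (Tf^*)^{**}=S(Tf^*)$ and in particular $(Tf)^*\prec Tf^*$. Thus $\int_0^\infty (Tf)^*g\le \int_0^\infty (Tf^*)g$ for $g\downarrow$ (again Hardy--Littlewood--P\'olya). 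Now $\int_0^\infty (Tf^*)(t)g(t)\,dt=\int_0^\infty f^*(t)(T'g)(t)\,dt$ by the definition of the associate operator (valid because $f^*\downarrow$ and $T'g$ is the genuine adjoint action here — this is the identity used in Lemma~\ref{associate operator}), and by H\"older's inequality for associate spaces this is $\le\|f^*\|_X\|T'g\|_{X'}=\|f\|_X\|T'g\|_{X'}$. So in fact $C=1$.

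For optimality, suppose $T:X\to Y$ is bounded with $Y$ r.i. I must show $\|f\|_Y\le C\|f\|_{\mathfrak{R}[T,X]}$ for $f\in\mathfrak{R}[T,X]$. The idea is to use duality in $Y$: $\|f\|_Y=\|f^*\|_Y=\sup\{\int_0^\infty f^*(t)h(t)\,dt: \|h\|_{Y'}\le 1\}$, and by the Hardy--Littlewood--P\'olya property (P2*) of $Y'$ it suffices to take the supremum over \emph{decreasing} $h$ with $\|h\|_{Y'}\le 1$ (replacing $h$ by a decreasing rearrangement only increases the integral against $f^*$ while not increasing $\|h\|_{Y'}$). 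For such $h\downarrow$, the definition of $\mathfrak{R}[T,X]$ gives $\int_0^\infty f^*h\le\|f\|_{\mathfrak{R}[T,X]}\|T'h\|_{X'}$, so it remains to show $\|T'h\|_{X'}\le \|T\|_{X\to Y}\|h\|_{Y'}$. This is precisely the boundedness of the associate operator $T':Y'\to X'$, with $\|T'\|=\|T\|$, which holds because $Y'$ and $X'$ are the K\"othe duals and $T:X\to Y$ is bounded. Combining, $\|f\|_Y\le \|T\|_{X\to Y}\|f\|_{\mathfrak{R}[T,X]}$, which is the required embedding.

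The main obstacle I anticipate is the careful justification of the associate-operator identity $\int_0^\infty (Tf^*)g=\int_0^\infty f^*(T'g)$ and the duality $\|T'h\|_{X'}\le\|T\|\,\|h\|_{Y'}$ in the generality at hand — $T$ is only assumed defined on $L^1\cap L^\infty\to L^1+L^\infty$, so one must check that $T'$ is well defined on the relevant decreasing functions and that the pairing makes sense (finiteness of the integrals), presumably by first restricting to simple/truncated functions and passing to the limit via the Fatou property; the hypotheses that $T$ is decreasing and of class $\mathcal{H}$, already exploited in Lemma~\ref{associate operator}, are exactly what make this bookkeeping work. A secondary point to handle with care is verifying (P4)--(P5) so that $\mathfrak{R}[T,X]$ is a bona fide r.i.\ space and not merely a normed lattice.
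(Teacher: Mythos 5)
Your overall route coincides with the paper's: verify the r.i.\ axioms for $\|\cdot\|_{\mathfrak{R}[T,X]}$, prove $\|Tf\|_{\mathfrak{R}[T,X]}\le\|f\|_X$ by combining HLP-monotonicity ($Tf\prec Tf^*$), the fact that $Tf^*$ is decreasing, and the adjoint identity $\int_0^\infty Tf^*(s)\,g(s)\,ds=\int_0^\infty f^*(s)\,T'g(s)\,ds\le\|f\|_X\|T'g\|_{X'}$, and obtain minimality from the boundedness of $T':Y'\rightarrow X'$ together with the duality formula $\|f\|_Y=\sup_{g\downarrow}\int_0^\infty f^*g/\|g\|_{Y'}$. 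These two halves of your argument are correct and essentially identical to the paper's proof.

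The one genuine gap is (P4). You correctly reduce it to showing $\chi_{(0,t)}\in\mathfrak{R}[T,X]$, but you never supply an argument, and --- more tellingly --- you never invoke the restricted lower estimate $S\chi_{(0,t)}\le ST\chi_{(0,t)}$, which is the third defining property of class $\mathcal{H}$ and is in the hypotheses precisely to make (P4) work. The two properties you do use (decreasingness and HLP-monotonicity) give only \emph{upper} control on $T$; to bound $\sup_{g\downarrow}\int_0^{|E|}g/\|T'g\|_{X'}$ from above one needs a \emph{lower} bound on $T$, since otherwise $\|T'g\|_{X'}$ could be far smaller than the numerator and the supremum infinite. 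The paper's argument is short: for $g$ decreasing, the restricted lower estimate together with Hardy's lemma gives $\int_0^{|E|}g(s)\,ds\le\int_0^\infty T\chi_{(0,|E|)}(s)\,g(s)\,ds=\int_0^\infty\chi_{(0,|E|)}(s)\,T'g(s)\,ds\le\|\chi_{(0,|E|)}\|_X\,\|T'g\|_{X'}$, whence $\|\chi_E\|_{\mathfrak{R}[T,X]}\le\varphi_X(|E|)<\infty$. You should add this step. The remaining bookkeeping you flag (well-definedness of $T'$ on decreasing functions and finiteness of $\|T'\chi_{(0,t)}\|_{X'}$) is indeed handled exactly as you suggest, via the boundedness of $T':L^1\cap L^\infty\rightarrow X'$.
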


\begin{proof}

Let us check properties (P1)-(P5) for
$$
\|f\|_{\mathfrak{R}[T,X]}=\sup_{g\downarrow}\frac{\int_0^\infty f^*(s)g(s)\,ds}{\|T' g\|_{X'}}.
$$

(P1): Clearly, if $f=0$ then $\|f\|_{\mathfrak{R}[T,X]}=0$ and, for every scalar $\alpha$, we have $\|\alpha f\|_{\mathfrak{R}[T,X]}=|\alpha|\|f\|_{\mathfrak{R}[T,X]}$. Now, if $\|f\|_{\mathfrak{R}[T,X]}=0$, then for every $t>0$ we have that
$$
\int_0^tf^*(s)\,ds=\int_0^\infty f^*(s)\chi_{(0,t)}(s)\,ds\leq \|T'\chi_{(0,t)}\|_{X'}\|f\|_{\mathfrak{R}[T,X]}=0.
$$
This implies that $f=0$.

For the triangle inequality, let $f_1,f_2\in \mathfrak{R}[T,X]$. Observe that starting with the inequality $(f_1+f_2)^{**}\leq f_1^{**}+f_2^{**}$, by Hardy's Lemma \cite[Proposition II.3.6]{Bennett-Sharpley}, it follows that for any decreasing function $g$
\begin{align*}
\int_0^\infty(f_1+f_2)^*(s)g(s)\,ds&\leq\int_0^\infty (f_1^*+f_2^*)(s)g(s)\,ds\\&\leq\|f_1\|_{\mathfrak{R}[T,X]}\|T'g\|_{X'}+\|f_2\|_{\mathfrak{R}[T,X]}\|T'g\|_{X'}.
\end{align*}
That is
$$
\frac{\int_0^\infty(f_1+f_2)^*(s)g(s)\,ds}{\|T'g\|_{X'}}\leq \|f_1\|_{\mathfrak{R}[T,X]}+\|f_2\|_{\mathfrak{R}[T,X]},
$$
and taking the supremum over all decreasing functions $g$ we get the result. Thus, $\|\cdot\|_{\mathfrak{R}[T,X]}$ defines a norm.

(P2) is immediate. In order to check that Fatou property (P3) also holds, let $0\leq f_n\uparrow f$ a.e., so in particular, we also have $f_n^*\uparrow f^*$. Hence, by the monotone converge theorem, for any decreasing function $g$, we have that
$$
\int_0^\infty f_n^*(s) g(s)\,ds\uparrow\int_0^\infty f^*(s)g(s)\,ds.
$$
Therefore,
$$
\|f\|_{\mathfrak{R}[T,X]}=\sup_{g\downarrow}\frac{\int_0^\infty f^*(s)g(s)\,ds}{\|T' g\|_{X'}}=\sup_{g\downarrow}\frac{\sup_n\int_0^\infty f_n^*(s)g(s)\,ds}{\|T' g\|_{X'}}=\sup_n\|f_n\|_{\mathfrak{R}[T,X]}.
$$

Now, for property (P4), let $E\in\Sigma$ with $|E|<\infty$. Using that $S\chi_{(0,t)}\leq ST\chi_{(0,t)}$ for every $t>0$, together with H\"older's inequality, we get
\begin{align*}
\|\chi_E\|_{\mathfrak{R}[T,X]}& = \sup_{g\downarrow}\frac{\int_0^\infty \chi_{(0,|E|)}(s)g(s)\,ds}{\|T' g\|_{X'}} \leq \sup_{g\downarrow}\frac{\int_0^\infty T\chi_{(0,|E|)}(s)g(s)\,ds}{\|T' g\|_{X'}}\\
& = \sup_{g\downarrow}\frac{\int_0^\infty \chi_{(0,|E|)}(s)T'g(s)\,ds}{\|T' g\|_{X'}} \leq \|\chi_E\|_X<\infty.
\end{align*}

For the last property (P5), let $E\in \Sigma$ with $|E|<\infty$ and take a measurable function $f$. We have
\begin{align*}
\int_E f (x)\,dx&\leq \int_0^{|E|}f^*(s)\,ds=\|T'\chi_{(0,|E|)}\|_{X'}\frac{\int_0^\infty f^*(s)\chi_{(0,|E|)}(s)\,ds}{\|T'\chi_{(0,|E|)}\|_{X'}}\\
& \leq \|T'\chi_{(0,|E|)}\|_{X'}\|f\|_{\mathfrak{R}[T,X]},
\end{align*}
and $\|T'\chi_{(0,|E|)}\|<\infty$.

Therefore, $\mathfrak{R}[T,X]$ is an r.i.\  space. Now, let $f\in X$ (without loss of generality we may assume that $f\ge 0$). Since $(Tf)^{**}\leq(Tf^*)^{**}$ and $Tf^*$ is decreasing, we have that
\begin{align*}
\|Tf\|_{\mathfrak{R}[T,X]} &= \sup_{g\downarrow}\frac{\int_0^\infty (Tf)^*(s)g(s)\,ds}{\|T' g\|_{X'}} \leq \sup_{g\downarrow}\frac{\int_0^\infty Tf^*(s)g(s)\,ds}{\|T' g\|_{X'}}\\
& = \sup_{g\downarrow}\frac{\int_0^\infty f^*(s)T'g(s)\,ds}{\|T' g\|_{X'}} = \|f\|_X,
\end{align*}
so $T:X\rightarrow \mathfrak{R}[T,X]$ is bounded.

Now, suppose that $Y$ is another r.i.\  space such that $T:X\rightarrow Y$ is bounded, and let us see that  $\mathfrak{R}[T,X]\subset Y$. Since $T':Y'\rightarrow X'$ is bounded, then for any $g\in Y'$ we have
$$
\|T'g\|_{X'}\leq\|T'\|_{Y'\rightarrow X'}\|g\|_{Y'}=\|T\|_{X\rightarrow Y}\|g\|_{Y'}.
$$
Hence, for $f\in \mathfrak{R}[T,X]$, we have that
$$
\|f\|_Y = \sup_{g\downarrow}\frac{\int_0^\infty f^*(s)g(s)\,ds}{\|g\|_{Y'}} \leq \|T\|_{X\rightarrow Y}\sup_{g\downarrow}\frac{\int_0^\infty f^*(s)g(s)\,ds}{\|T' g\|_{X'}} = \|T\|_{X\rightarrow Y}\|f\|_{\mathfrak{R}[T,X]}.
$$
Therefore, we have that $\mathfrak{R}[T,X]\subset Y$ as claimed (note that the norm of this embedding is smaller than $\|T\|_{X\rightarrow Y}$). Thus, the space $\mathfrak{R}[T,X]$ is the r.i.\  optimal range for $T$ on $X$.
\end{proof}

\begin{theorem}\label{optimal2}
Let $X$ be an r.i.\  space and $T:X\rightarrow L^1+L^\infty$ be an operator of class $\mathcal{H}$. Then, the space $\mathfrak{R}[T,X]_0$ is the optimal rearrangement invariant monotone Riesz-Fischer range   for the operator $T$ defined on $X$.
\end{theorem}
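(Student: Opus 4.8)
The plan is to establish three things: that $\mathfrak{R}[T,X]_0$ is a rearrangement invariant monotone Riesz--Fischer space; that $T\colon X\to\mathfrak{R}[T,X]_0$ is bounded; and that it is the smallest space with these two properties. Throughout I abbreviate $\|\cdot\|_0=\|\cdot\|_{\mathfrak{R}[T,X]_0}$, and I use repeatedly that, since $T$ is decreasing, $Tg^*$ is positive and decreasing for every $g$, so $(Tg^*)^*=Tg^*$ and hence $(Tg^*)^{**}=S(Tg^*)$; I also use that $f^{**}\le h^{**}$ is the same as $f\prec h$ when $h$ is decreasing, and that $h^{**}\le k^{**}$ whenever $h^*\le k^*$.

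The boundedness and optimality are quick. If $f\in X$, HLP-monotonicity gives $(Tf)^{**}\le(Tf^*)^{**}$, so the admissible choice $g=f^*$ yields $\|Tf\|_0\le\|f^*\|_X=\|f\|_X$; thus $T\colon X\to\mathfrak{R}[T,X]_0$ has norm at most $1$. Conversely, let $Y$ be a rearrangement invariant monotone Riesz--Fischer space with $T\colon X\to Y$ bounded. For $f\in\mathfrak{R}[T,X]_0$ choose $g\in X$ with $f^{**}\le(Tg^*)^{**}$; since $(Tg^*)^*=Tg^*$ this is exactly $f\prec Tg^*$, so property (P2$^*$) of $Y$ gives $\|f\|_Y\le\|Tg^*\|_Y\le\|T\|_{X\to Y}\|g\|_X$, and taking the infimum over admissible $g$ we obtain $\|f\|_Y\le\|T\|_{X\to Y}\|f\|_0<\infty$. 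Hence $\mathfrak{R}[T,X]_0\hookrightarrow Y$ continuously, which is the asserted optimality.

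Next I would check the r.i.\ axioms other than the Riesz--Fischer property. Property (P2) (hence (P2$^*$)) and positive homogeneity of $\|\cdot\|_0$ follow at once from the corresponding properties of $u\mapsto u^{**}$ and the linearity of $T$. For (P4), the restricted lower estimate gives
\[
\chi_E^{**}=S\chi_{(0,|E|)}\le ST\chi_{(0,|E|)}=(T\chi_{(0,|E|)})^{**},
\]
so, taking $g=\chi_{(0,|E|)}$, $\chi_E\in\mathfrak{R}[T,X]_0$ with $\|\chi_E\|_0\le\|\chi_{(0,|E|)}\|_X=\varphi_X(|E|)<\infty$. For (P5), if $f^{**}\le(Tg^*)^{**}$ then $\int_E|f|\le|E|\,f^{**}(|E|)\le\int_0^{|E|}Tg^*$, and by the associate-operator identity and H\"older's inequality $\int_0^{|E|}Tg^*=\int_0^\infty g^*\,T'\chi_{(0,|E|)}\le\|g\|_X\,\|T'\chi_{(0,|E|)}\|_{X'}$, with $\|T'\chi_{(0,|E|)}\|_{X'}<\infty$ because $T'$ maps $L^1\cap L^\infty=(L^1+L^\infty)'$ into $X'$; infimizing over $g$ gives (P5), which in particular shows that $\|\cdot\|_0$ separates points. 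For the triangle inequality, given $f_i^{**}\le(Tg_i^*)^{**}$ one may assume $g_i=g_i^*\ge0$; then $g:=g_1+g_2$ is positive and decreasing, $Tg=Tg_1+Tg_2$ by linearity, so $(Tg^*)^{**}=(Tg_1^*)^{**}+(Tg_2^*)^{**}\ge f_1^{**}+f_2^{**}\ge(f_1+f_2)^{**}$ while $\|g\|_X\le\|g_1\|_X+\|g_2\|_X$; infimizing gives subadditivity.

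The heart of the matter, and the step I expect to be the only real obstacle, is the Riesz--Fischer property (P3$^*$). Let $f_n\in\mathfrak{R}[T,X]_0$ with $\sum_n\|f_n\|_0<\infty$. By the (P5)-estimate, $\sum_n\int_E|f_n|<\infty$ for every set $E$ of finite measure, so $f:=\sum_nf_n$ converges absolutely a.e.\ on $\mathbb{R}^+$; and from $|f|\le\sum_n|f_n|$, the behaviour of rearrangements under increasing limits, and the subadditivity of $u\mapsto u^{**}$, one gets $f^{**}\le\sum_nf_n^{**}$. Fix $\varepsilon>0$ and pick $g_n=g_n^*\ge0$ with $f_n^{**}\le(Tg_n^*)^{**}=S(Tg_n)$ and $\|g_n\|_X\le\|f_n\|_0+\varepsilon2^{-n}$; since $X$ is Banach, $g:=\sum_ng_n$ converges in $X$ to a positive decreasing function with $\|g\|_X\le\sum_n\|f_n\|_0+\varepsilon$, and the partial sums $G_N=\sum_{n\le N}g_n$ increase pointwise to $g$ and converge to $g$ in $X$. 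The crucial point is that, because $T$ is \emph{linear and bounded} from $X$ into $L^1+L^\infty$, $TG_N=\sum_{n\le N}Tg_n\to Tg$ in $L^1+L^\infty$, hence $\int_0^tTG_N\to\int_0^tTg$ for every $t>0$; but $\int_0^tTG_N$ also increases to $\int_0^t\big(\sum_nTg_n\big)$ by monotone convergence (the $Tg_n$ being positive), so $\int_0^t\sum_nTg_n=\int_0^tTg$ for all $t$. Consequently
\[
f^{**}\le\sum_nf_n^{**}\le\sum_nS(Tg_n)=S\Big(\sum_nTg_n\Big)=S(Tg)=(Tg^*)^{**},
\]
the last equality since $g=g^*$. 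Hence $f\in\mathfrak{R}[T,X]_0$ with $\|f\|_0\le\|g\|_X\le\sum_n\|f_n\|_0+\varepsilon$, and letting $\varepsilon\to0$ proves (P3$^*$) (and with it completeness). I expect this passage from finitely many witnesses $g_1,\dots,g_N$ to a single witness $g$ for the whole series to be the only genuinely delicate ingredient, since it relies on the true continuity and linearity of $T$ on $X$ rather than merely on the formal ``class $\mathcal{H}$'' axioms; everything else is bookkeeping with rearrangements, Hardy's lemma, and the associate-operator identity.
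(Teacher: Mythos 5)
Your proposal is correct and follows essentially the same route as the paper: near-optimal witnesses $g_n$ are chosen, summed in $X$, and shown to witness the whole series, with the remaining axioms checked by the same rearrangement and associate-operator identities. The only (harmless) variation is in closing the Riesz--Fischer step: you identify $\sum_n Tg_n$ with $Tg$ via norm-continuity of $T$ on $X$, whereas the paper bounds the partial sums directly by $\big(T\big(\sum_{j\ge k}g_j^*\big)\big)^{**}$ using linearity and the positivity of $T$ on the decreasing tails; both arguments are valid and rest on the same standing assumptions.
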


\begin{proof}
Let us see that the space $\mathfrak{R}[T,X]_0$ is a rearrangement invariant monotone Riesz-Fischer space.

To this end, let us start by proving (P1); that is, $\|\cdot\|_{\mathfrak{R}[T,X]_0}$ actually defines a norm. We will prove the non-trivial part. Suppose that $\|f\|_{\mathfrak{R}[T,X]_0}=0$, then, there exists $g_n$ in $X$, with $f^{**}\leq~(Tg_n^*)^{**}$, such that $\|g_n\|_X\rightarrow0$. Since $T:X\rightarrow L^1+L^\infty$ is bounded, then $\|Tg_n^*\|_{L^1+L^\infty}\rightarrow~0$. From the condition $f^{**}\leq(Tg_n^*)^{**}$ for every $n$, we obtain     \cite[Theorem~II.4.6]{Bennett-Sharpley}
$$
\|f\|_{L^1+L^\infty}\leq \inf\|Tg_n^*\|_{L^1+L^\infty}=0,
$$
which shows that $f=0$.

For the triangle inequality, take  $f_1, f_2\in \mathfrak{R}[T,X]_0$. For each pair of   functions $g_1,g_2$ in $X$, such that $f_i^{**}\leq (Tg_i^*)^{**}$, for $i=1,2$, since $Tg_1^*$ and $Tg_2^*$ are decreasing functions, we have that

\begin{align*}
   (f_1+f_2)^{**}(t)&\leq  (Tg_1^*)^{**}(t)+(Tg_2^*)^{**}(t)=\frac1t\int_0^t\big((Tg_1^*)(s)+(Tg_2^*)(s)\big)\,ds \\
   &= \frac1t\int_0^t T(g_1^*+g_2^*)(s)\,ds=(T(g_1^*+g_2^*))^{**}(t).
\end{align*}

Therefore,
$$
\|f_1+f_2\|_{\mathfrak{R}[T,X]_0} \leq \|g_1^*+g_2^*\|_X \leq \|g_1\|_X+\|g_2\|_X,
$$
and since this holds for every $g_1,g_2\in X$ such that $f_i^{**}\leq (Tg_i^*)^{**}$, we get that $\|f_1+f_2\|_{\mathfrak{R}[T,X]_0} \leq \|f_1\|_{\mathfrak{R}[T,X]_0} + \|f_2\|_{\mathfrak{R}[T,X]_0}$.
\medskip

Property (P2$^*$) follows directly from the definition of the norm $\|\cdot\|_{\mathfrak{R}[T,X]_0}$. Let us see now that $\mathfrak{R}[T,X]_0$ satisfies the Riesz-Fischer property (P3$^*$).

To see this, let us suppose, without loss of generality, that $f_n$ is a sequence of positive functions in $X$ satisfying that  $\sum_{n=1}^\infty\|f_n\|_{\mathfrak{R}[T,X]_0}<\infty$. We want to prove that $\sum_{n=1}^\infty f_n$ converges in $\mathfrak{R}[T,X]_0$.

By hypothesis, for each $n$ let $g_n$ be a decreasing function in $X$ with
$$
f_n^{**}\leq (T g_n^*)^{**} \,\,\,\,\,\, \mathrm{ and }\,\,\,\,\,\,\|g_n\|_X\leq\|f_n\|_{\mathfrak{R}[T,X]_0}+2^{-n}.
$$
In particular,
$$
\sum_{n=1}^\infty \|g_n\|_X<\infty,
$$
and since $X$ is a Banach space, then the series $\sum_{n=1}^\infty g^*_n$ converges in $X$.

We claim that, for each $k>0$,
$$
\Big(\sum_{n=k}^\infty f_n\Big)^{**}\leq\Big(T\Big(\sum_{n=k}^\infty g^*_n\Big)\Big)^{**}.
$$
Indeed, since $T:X\rightarrow L^1+L^\infty$ is bounded, by  \cite[Theorem II.4.6]{Bennett-Sharpley},
$$
\sum_{n=1}^\infty \|f_n\|_{L^1+L^\infty}\leq\|T\|\sum_{n=1}^\infty\|g_n\|_X<\infty,
$$
and we get that $\sum_{n=1}^\infty f_n\in L^1+L^\infty$.

Now for fixed $k>0$, let
$$
h_{k,n}=\sum_{j=k}^{k+n}f_j.
$$
Clearly
$$
h_{k,n}\uparrow \sum_{j=k}^\infty f_j
$$
almost everywhere, so $h_{k,n}^{**}\uparrow (\sum_{j=k}^\infty f_j)^{**}$ pointwise. On the other hand, for each $n\in \mathbb{N}$, since $Tg^*_j\geq0$ and decreasing, we have
$$
h_{k,n}^{**}=\Big(\sum_{j=k}^{k+n} f_j\Big)^{**} \leq \sum_{j=k}^{k+n} (Tg^*_j)^{**} = \Big(T\Big(\sum_{j=k}^{k+n}g^*_j\Big)\Big)^{**} \leq \Big(T\Big(\sum_{j=k}^\infty g^*_j\Big)\Big)^{**}.
$$
Hence, taking the limit as $n\rightarrow\infty$ we have that
$$
\Big(\sum_{j=k}^\infty f_j\Big)^{**}\leq\Big(T\Big(\sum_{j=k}^\infty g^*_j\Big)\Big)^{**}
$$
as claimed.

Now, note that since
$$
\lim_{k\rightarrow\infty}\Big\|\sum_{n=k}^\infty g^*_n\Big\|_X=0 \hspace{1cm}\textrm{and}\hspace{1cm}\Big(\sum_{n=k}^\infty f_n\Big)^{**}\leq\Big(T\Big(\sum_{n=k}^\infty g_n^*\Big)\Big)^{**},
$$
then, we have that
$$
\lim_{k\rightarrow\infty}\Big\|\sum_{n=k}^\infty f_n\Big\|_{\mathfrak{R}[T,X]_0}=0,
$$
or equivalently, that $\sum_{n=1}^\infty f_n$ converges in $\mathfrak{R}[T,X]_0$. Therefore, (P3$^*$) holds.

In order to check property (P4), let $E\in\Sigma$ be such that $|E|<\infty$. Since $S\chi_{(0,|E|)} \leq ST\chi_{(0,|E|)}$, then $\chi^{**}_{(0,|E|)}\le(T\chi_{(0,|E|)})^{**}$, so we have that
$$
\|\chi_E\|_{\mathfrak{R}[T,X]_0} = \inf\{\|g\|_X:\chi_E^{**}\leq(Tg^*)^{**}\} \leq \|\chi_{(0,|E|)}\|_X<\infty.
$$

Finally for (P5), take $E\in\Sigma$, with $|E|<\infty$, and $f\in \mathfrak{R}[T,X]_0$. For every   $g\in X$ such that $f^{**}\leq(Tg^*)^{**}$ we have that
$$
\int_E f(x)\,dx\leq\int_0^{|E|}f^*(t)\,dt\leq\int_0^{|E|}Tg^*(t)\,dt\leq \|T'\chi_{(0,|E|)}\|_{X'}\|g\|_X.
$$
By hypothesis, since $T:X\rightarrow L^1+L^\infty$ is bounded, we have that $T':L^1\cap L^\infty\rightarrow X'$ is also bounded. In particular,  $T'\chi_{(0,|E|)}\in X'$ and since the above inequality holds for every $g$, we get
$$
\int_E f(x)\,dx \leq \|T'\chi_{(0,|E|)}\|_{X'}\|f\|_{\mathfrak{R}[T,X]_0}.
$$

So far, we have proved that $\mathfrak{R}[T,X]_0$ is a rearrangement invariant monotone Riesz-Fischer space. Moreover, for any $f\in X$,
$$
\|Tf\|_{\mathfrak{R}[T,X]_0} = \inf\big\{\|g\|_X:(Tf)^{**}\leq(Tg^*)^{**}\big\} \leq \|f^*\|_X.
$$
Thus, $T:X\rightarrow \mathfrak{R}[T,X]_0$ is bounded (with norm less than  or equal to one).

For the optimality, we consider any rearrangement invariant monotone Riesz-Fischer space $Y$ such that $T:X\rightarrow Y$ is bounded. If $f\in \mathfrak{R}[T,X]_0$ then,  for each  $g\in X$, with $f^{**}\leq (Tg^*)^{**}$, since $Y$ satisfies \eqref{P2*},  we have that
$$
\|f\|_Y \leq \|Tg^*\|_Y \leq \|T\| \|g\|_X.
$$
Now, taking the infimum over all such $g$, this yields that $\|f\|_Y \leq \|T\|\|f\|_{\mathfrak{R}[T,X]_0}$; i.e.,  $\mathfrak{R}[T,X]_0\subset Y$.
\end{proof}

\begin{remark}\label{comparison}{\rm Notice that we always have the embedding
$$
\mathfrak{R}[T,X]_0 \subseteq \mathfrak{R}[T,X],
$$
provided the two spaces are well defined. Indeed, let $f\in \mathfrak{R}[T,X]_0$ and pick any   function $g\in X$ such that $f^{**}\leq (Tg^*)^{**}$. Then, for every decreasing function $h$ we have that
$$
\int_0^\infty f^* (s)h(s)\,ds\leq \int_0^\infty (Tg^*)(s) h(s)\,ds\leq \|g\|_X \|T'h\|_{X'}.
$$
This implies that
$$
\|f\|_{\mathfrak{R}[T,X]}=\sup_{h\downarrow}\frac{\int_0^\infty f^*(s)h(s)\,ds}{\|T' h\|_{X'}} \leq \|g\|_X,
$$
and taking the infimum over all functions $g\in X$, with $f^{**}\leq (Tg^*)^{**}$, we get that $\|f\|_{\mathfrak{R}[T,X]}\leq \|f\|_{\mathfrak{R}[T,X]_0}$.}
\end{remark}

\begin{example}
Let us see what is the optimal range for the rank one operator $T_w$ of Example~\ref{raoo}. We recall that
$$
\Vert f\Vert_{(\Lambda_W)'}=\sup_{t>0}f^{**}(t)\frac{t}{W(t)}=\sup_{t>0}\frac{\int_0^tf^*(s)\,ds}{W(t)},
$$
where $W(t)=\int_0^tw(s)\,ds$ \cite[Theorem~2.4.7]{CRS}. Hence, using \cite[Theorem~3.2]{CaSo},
\begin{align*}
\Vert f\Vert_{\mathfrak{R}[T_w,\Lambda_W]}&=\sup_{h\downarrow}\frac{\int_0^\infty f^*(s)h(s)\,ds}{\Vert T'_wh\Vert_{(\Lambda_W)'}}=\sup_{h\downarrow}\frac{\int_0^\infty f^*(s)h(s)\,ds}{\Vert T_wh\Vert_{(\Lambda_W)'}}\\
&=\sup_{h\downarrow}\frac{\int_0^\infty f^*(s)h(s)\,ds}{\sup_{t>0}\frac{\int_0^tw(s)\int_0^\infty h(r)w(r)\,dr\,ds}{W(t)}}
=\sup_{h\downarrow}\frac{\int_0^\infty f^*(s)h(s)\,ds}{\int_0^\infty h(r)w(r)\,dr}\\
&=\sup_{a>0}\frac{\int_0^a f^*(s) \,ds}{\int_0^a  w(r)\,dr}=\Vert f\Vert_{M_{t/W(t)}}.
\end{align*}
Therefore $\mathfrak{R}[T_w,\Lambda_W]=M_{t/W(t)}$, with equality of norms.  Analogously,
\begin{align*}
\Vert f\Vert_{\mathfrak{R}[T_w,\Lambda_W]_0}&= \inf\bigg\{\Vert g\Vert_{\Lambda_W}:\int_0^tf^*(s)\,ds\le W(t)\int_0^\infty g^*(s)w(s)\,ds\bigg\}\\
&=\Vert f\Vert_{M_{t/W(t)}}.
\end{align*}

In particular, if we take $w(t)=1/\sqrt{t}$, then
$$
\mathfrak{R}[T_w,L^{2,1}]=\mathfrak{R}[T_w,L^{2,1}]_0=L^{2,\infty}.
$$
\end{example}

We now give a simple condition that characterizes when $\mathfrak{R}[T,X]_0=\mathfrak{R}[T,X]$. In Theorem~\ref{rangmar} we will prove that this is the case for the Hardy operator when $X$ is a Marcinkiewicz space (see also \cite{Bennett} for some related results).

\begin{corollary}
Let $X$ be an r.i.\  space and $T:X\rightarrow L^1+L^\infty$ be an operator of class $\mathcal{H}$. Then, the space $\mathfrak{R}[T,X]_0$ satisfies the Fatou property if and only if
$$
\mathfrak{R}[T,X]_0=\mathfrak{R}[T,X].
$$
\end{corollary}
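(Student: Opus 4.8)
The plan is to deduce the statement directly from the two optimality results and Remark~\ref{comparison}, which between them already determine the structure of both spaces. By Theorem~\ref{optimal1}, $\mathfrak{R}[T,X]$ is always an r.i.\ space, so in particular it has the Fatou property (P3); by Theorem~\ref{optimal2}, $\mathfrak{R}[T,X]_0$ is always a rearrangement invariant monotone Riesz-Fischer space, hence it satisfies (P1), (P2$^*$), (P4) and (P5), the only r.i.\ axiom possibly failing being (P3); and Remark~\ref{comparison} gives the universal inclusion $\mathfrak{R}[T,X]_0\subseteq\mathfrak{R}[T,X]$ with $\|f\|_{\mathfrak{R}[T,X]}\le\|f\|_{\mathfrak{R}[T,X]_0}$. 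Granting this, one implication is immediate: if $\mathfrak{R}[T,X]_0=\mathfrak{R}[T,X]$ as normed spaces, then $\mathfrak{R}[T,X]_0$ is a copy of an r.i.\ space and hence enjoys the Fatou property.

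For the converse, suppose $\mathfrak{R}[T,X]_0$ satisfies (P3). I would first note that (P2$^*$) implies (P2) --- since $f^*\le g^*$ pointwise forces $f\prec g$ --- so that $\mathfrak{R}[T,X]_0$ now satisfies all of (P1)--(P5); that is, it is a genuine r.i.\ Banach function space. The proof of Theorem~\ref{optimal2} shows that $T\colon X\to\mathfrak{R}[T,X]_0$ is bounded with norm at most one. Applying the optimality of $\mathfrak{R}[T,X]$ among r.i.\ ranges for $T$ on $X$ (Theorem~\ref{optimal1}), together with the quantitative form of that optimality (the embedding into any admissible r.i.\ range $Y$ has norm at most $\|T\|_{X\to Y}$), we obtain $\mathfrak{R}[T,X]\subseteq\mathfrak{R}[T,X]_0$ with $\|f\|_{\mathfrak{R}[T,X]_0}\le\|f\|_{\mathfrak{R}[T,X]}$ for every $f$. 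Combining this with Remark~\ref{comparison} gives $\mathfrak{R}[T,X]_0=\mathfrak{R}[T,X]$, with equality of norms.

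This is essentially a bookkeeping argument, so I do not anticipate a real obstacle; the one point requiring attention is verifying that the Fatou property genuinely promotes $\mathfrak{R}[T,X]_0$ from a monotone Riesz-Fischer space to an r.i.\ Banach function space --- so that the optimality statement of Theorem~\ref{optimal1} can legitimately be applied with $Y=\mathfrak{R}[T,X]_0$ --- and tracking the norm inequalities carefully so as to reach isometric, rather than merely isomorphic, equality.
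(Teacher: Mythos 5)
Your argument is correct and follows essentially the same route as the paper: one direction is immediate from $\mathfrak{R}[T,X]$ being r.i., and for the converse you use Remark~\ref{comparison} for one inclusion and, after promoting $\mathfrak{R}[T,X]_0$ to an r.i.\ space via the Fatou property, the optimality in Theorem~\ref{optimal1} applied to $Y=\mathfrak{R}[T,X]_0$ for the other. The only difference is that you additionally track the norm constants to get isometric equality, which the paper does not state but which follows just as you describe.
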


\begin{proof}

Clearly, if $\mathfrak{R}[T,X]_0=\mathfrak{R}[T,X]$, then $\mathfrak{R}[T,X]_0$ has Fatou property. For the converse, notice that as was mentioned in Remark~\ref{comparison}, we always have
$$
\mathfrak{R}[T,X]_0\subseteq\mathfrak{R}[T,X].
$$
Since $\mathfrak{R}[T,X]_0$ is a rearrangement invariant Riesz-Fischer space, if it has also the Fatou property (P3), then it is an r.i.\  space. Since $T:X\rightarrow \mathfrak{R}[T,X]_0$ is bounded, by Theorem~\ref{optimal1} we also have
$$
\mathfrak{R}[T,X]\subseteq\mathfrak{R}[T,X]_0.
$$
\end{proof}

The existence of the r.i. optimal range of an operator $T$ of class $\mathcal H$ defined on an r.i. space $X$ is granted by the boundedness of $T:X\rightarrow L^1+L^\infty$. In fact, we have the following characterization:

\begin{proposition}\label{existence of range}
Let $T$ be an operator of class $\mathcal H$, and $X$ be an r.i.\  space. Then, the following are equivalent:
\begin{enumerate}[(i)]
\item There exists an  r.i.\ optimal  range for the operator $T$ on $X$.
\item $T:X\rightarrow L^1+L^\infty$ is bounded.
\item $T'\chi_{(0,t)}\in X'$, for every $t>0$.
\end{enumerate}
Moreover, if any of these conditions holds, then $\mathfrak{R}[T,X]$ is the  r.i.\ optimal  range for $T$ on $X$, and the norm of $T:X\rightarrow \mathfrak{R}[T,X]$ is always   equal to one.
\end{proposition}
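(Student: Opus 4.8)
The plan is to prove $(i)\Rightarrow(ii)$, $(ii)\Rightarrow(iii)$, $(iii)\Rightarrow(ii)$ and $(ii)\Rightarrow(i)$ — so that the three conditions are equivalent — and then read off the two remaining assertions from Theorem~\ref{optimal1}. The implication $(i)\Rightarrow(ii)$ is immediate: if an r.i.\ optimal range $Y$ exists then $T:X\to Y$ is bounded by definition, and since $Y$ is an r.i.\ space on $\mathbb R^+$ we have the continuous inclusions $Y\hookrightarrow M_{\varphi_Y}\hookrightarrow L^1+L^\infty$, whence $T:X\to L^1+L^\infty$ is bounded. The reverse implication $(ii)\Rightarrow(i)$ requires nothing new: it is exactly Theorem~\ref{optimal1}, which under hypothesis $(ii)$ produces $\mathfrak R[T,X]$ as the r.i.\ optimal range.

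For $(ii)\Rightarrow(iii)$ I would use that the associate operator of a bounded operator between r.i.\ spaces is bounded on the associate spaces, so $T':(L^1+L^\infty)'=L^1\cap L^\infty\to X'$ is bounded; since $\chi_{(0,t)}\in L^1\cap L^\infty$ for every $t>0$, this gives $T'\chi_{(0,t)}\in X'$. Conversely, for $(iii)\Rightarrow(ii)$, take $f\in X$ with $f\ge 0$ and recall $\|h\|_{L^1+L^\infty}=h^{**}(1)=\int_0^1 h^*(s)\,ds$. Since $T$ is HLP-monotone and $Tf^*$ is decreasing (because $T$ is decreasing), one gets $\int_0^1(Tf)^*(s)\,ds\le\int_0^1 Tf^*(s)\,ds$, and then, exactly as in the proof of Theorem~\ref{optimal1} together with H\"older's inequality,
$$
\|Tf\|_{L^1+L^\infty}\le\int_0^1 Tf^*(s)\,ds=\int_0^\infty f^*(s)(T'\chi_{(0,1)})(s)\,ds\le\|T'\chi_{(0,1)}\|_{X'}\|f\|_X,
$$
so $T:X\to L^1+L^\infty$ is bounded (note that only the case $t=1$ of $(iii)$ is used here).

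For the final two assertions, Theorem~\ref{optimal1} already gives that $\mathfrak R[T,X]$ is the r.i.\ optimal range and that $\|Tf\|_{\mathfrak R[T,X]}\le\|f\|_X$ for $f\in X$, so $\|T\|_{X\to\mathfrak R[T,X]}\le 1$. For the reverse inequality I would apply the optimality part of Theorem~\ref{optimal1} with $Y=\mathfrak R[T,X]$ itself: it yields a continuous inclusion $\mathfrak R[T,X]\hookrightarrow\mathfrak R[T,X]$ of norm at most $\|T\|_{X\to\mathfrak R[T,X]}$; but this inclusion is the identity map of a nonzero normed space, of norm exactly $1$, hence $\|T\|_{X\to\mathfrak R[T,X]}=1$.

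The step I expect to be the main obstacle is $(iii)\Rightarrow(ii)$: one has to squeeze a uniform operator bound into $L^1+L^\infty$ — a condition essentially ``at scale one'' — out of the single membership $T'\chi_{(0,1)}\in X'$, and it is precisely here that the $\mathcal H$-properties (decreasing and HLP-monotone) are needed, in order to dominate $(Tf)^{**}(1)$ by $\int_0^1 Tf^*$. As in Theorem~\ref{optimal1}, one should also make sure the associate identity $\int (Tf^*)\chi_{(0,1)}=\int f^*(T'\chi_{(0,1)})$ is legitimately applied for $f^*\in X$.
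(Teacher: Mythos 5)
Your proof of the three equivalences follows the paper's argument essentially verbatim: $(i)\Rightarrow(ii)$ via the embedding of any r.i.\ space into $L^1+L^\infty$, $(ii)\Rightarrow(iii)$ via boundedness of the associate operator $T':L^1\cap L^\infty\to X'$, $(iii)\Rightarrow(ii)$ via $\|Tf\|_{L^1+L^\infty}=\int_0^1(Tf)^*\le\int_0^1 Tf^*=\int_0^\infty f^*\,T'\chi_{(0,1)}\le\|T'\chi_{(0,1)}\|_{X'}\|f\|_X$, and $(ii)\Rightarrow(i)$ by invoking Theorem~\ref{optimal1}. The one place you go beyond the paper is the claim that $\|T\|_{X\to\mathfrak R[T,X]}$ equals one: the paper's proof stops at $(ii)\Rightarrow(i)$ and never justifies the lower bound $\|T\|\ge 1$ (it is simply used later, e.g.\ in Proposition~\ref{fundamental function range}), whereas you obtain it by applying the optimality estimate of Theorem~\ref{optimal1} with $Y=\mathfrak R[T,X]$, so that the identity map on the nonzero space $\mathfrak R[T,X]$ has norm at most $\|T\|_{X\to\mathfrak R[T,X]}$. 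That argument is valid and neatly closes a gap the paper leaves implicit; an alternative route would be to use the restricted lower estimate $\chi_{(0,t)}\prec T\chi_{(0,t)}$ directly, but your version is cleaner.
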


\begin{proof}

$(i)\Rightarrow(ii)$ Let $Y$ be the r.i.\ optimal   space such that $T:X\rightarrow Y$ is bounded. Since the embedding $Y\hookrightarrow L^1+L^\infty$ is bounded, we have that
$$
T:X\rightarrow Y\hookrightarrow L^1+L^\infty
$$
is bounded.
\medskip

$(ii)\Rightarrow (iii)$ Since $T:X\rightarrow L^1+L^\infty$ is bounded, we also have that the associate operator $T':L^1\cap L^\infty\rightarrow X'$ is bounded. Now, since $\chi_{(0,t)}\in L^1\cap L^\infty$, for every $t>0$, it follows that $T'\chi_{(0,t)}\in X'$.
\medskip

$(iii)\Rightarrow(ii)$ Let $f\in X$.
\begin{align*}
\Vert Tf\Vert_{L^1+L^\infty}&=\int_0^1(Tf)^*(t)\,dt\le\int_0^1Tf^*(t)\,dt\\
&=\int_0^\infty f^*(t)T'\chi_{(0,1)}(t)\,dt\le\Vert f\Vert_X\Vert T'\chi_{(0,1)}\Vert_{X'}.
\end{align*}
Hence, since $T'\chi_{(0,1)}\in X'$, we have that $T:X\rightarrow L^1+L^\infty$ is bounded.

$(ii)\Rightarrow (i)$
As $T$ satisfies the hypothesis of Theorem~\ref{optimal1}, we get that $\mathfrak R[T,X]$ is the r.i. optimal range for $T$ on $X$.
\end{proof}

\begin{remark}{\rm
A direct application of  Proposition~\ref{existence of range} is the non existence of $\mathfrak{R}[S,L^1]$, since $S'\chi_{(0,t)}(s)=\log^+(t/s)\notin L^\infty$.
}
\end{remark}
Let us now have a look  at the fundamental function of the spaces $\mathfrak R[T,X]$. For this purpose, we introduce the following function:
\begin{equation}\label{psif}
\Psi_{T,X}(t)=\frac{t}{\|T'\chi_{(0,t)}\|_{X'}}.
\end{equation}

\begin{proposition}\label{fundamental function range}
Given an r.i. space $X$ and an operator $T:X\rightarrow L^1+L^\infty$ of class $\mathcal H$, the fundamental function of the space $\mathfrak R[T,X]$ satisfies that
$$
\varphi_{\mathfrak{R}[T,X]}(t)=\Psi_{T,X}(t).
$$
\end{proposition}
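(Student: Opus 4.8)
The plan is to read the fundamental function off the norm of $\mathfrak R[T,X]$ directly. By Theorem~\ref{optimal1} the space $\mathfrak R[T,X]$ is rearrangement invariant, so $\varphi_{\mathfrak R[T,X]}(t)=\|\chi_{(0,t)}\|_{\mathfrak R[T,X]}$; since $\chi_{(0,t)}$ is itself decreasing, this equals
$$
\varphi_{\mathfrak R[T,X]}(t)=\sup_{g\downarrow}\frac{\int_0^t g(s)\,ds}{\|T'g\|_{X'}}.
$$
Observe first that $\Psi_{T,X}(t)=t/\|T'\chi_{(0,t)}\|_{X'}$ is a well-defined positive finite quantity: $\|T'\chi_{(0,t)}\|_{X'}<\infty$ by Proposition~\ref{existence of range}, and $\|T'\chi_{(0,t)}\|_{X'}\ge\varphi_{X'}(t)>0$ because $\chi_{(0,t)}\prec T'\chi_{(0,t)}$ by Lemma~\ref{associate operator}(ii) and $X'$ satisfies \eqref{P2*}. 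Now taking $g=\chi_{(0,t)}$ as a competitor in the supremum gives at once $\varphi_{\mathfrak R[T,X]}(t)\ge\Psi_{T,X}(t)$.

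For the reverse inequality it suffices to show $\frac{\int_0^t g}{\|T'g\|_{X'}}\le\frac{t}{\|T'\chi_{(0,t)}\|_{X'}}$ for every nonzero decreasing $g$ (if $\|T'g\|_{X'}=\infty$ this is trivial, and $\|T'g\|_{X'}=0$ is impossible by Lemma~\ref{associate operator}(ii)). Writing $g^{**}(t)=\frac1t\int_0^t g$ and $\bar g=g^{**}(t)\chi_{(0,t)}$, positive homogeneity of $T'$ gives $\|T'\bar g\|_{X'}=g^{**}(t)\|T'\chi_{(0,t)}\|_{X'}$, so the inequality to be proved is exactly $\|T'\bar g\|_{X'}\le\|T'g\|_{X'}$. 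Two facts yield this. First, $\bar g\prec g$: as $g^{**}$ is decreasing, $\int_0^s\bar g=s\,g^{**}(t)\le s\,g^{**}(s)=\int_0^s g$ for $0<s\le t$, while $\int_0^s\bar g=\int_0^t g\le\int_0^s g$ for $s\ge t$. Second, and this is the crux, the associate operator $T'$ preserves the Hardy--Littlewood--P\'olya order on nonnegative decreasing functions: if $g_1,g_2$ are nonnegative decreasing with $g_1\prec g_2$, then $T'g_1\prec T'g_2$. Granting this, $T'\bar g\prec T'g$, and since $X'$ is an r.i.\ space (hence satisfies \eqref{P2*}) we conclude $\|T'\bar g\|_{X'}\le\|T'g\|_{X'}$. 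Taking the supremum over $g\downarrow$ then yields $\varphi_{\mathfrak R[T,X]}(t)\le\Psi_{T,X}(t)$, and combined with the lower bound this finishes the proof.

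The substantial point, where the axioms of the class $\mathcal H$ are used and which I expect to be the main obstacle, is the order-preservation property of $T'$. The plan for it is to start from the extremal description
$$
\int_0^s(T'g_i)^*(r)\,dr=\sup\Big\{\int_0^\infty g_i(r)(Th)(r)\,dr:\,0\le h\le1,\ \int_0^\infty h\le s\Big\},
$$
obtained by combining the layer-cake formula for $\int_0^s(\cdot)^*$ with the defining identity $\int(T'g_i)h=\int g_i(Th)$ of the associate operator. For a fixed admissible $h$, one has the chain
$$
\int g_1(Th)\le\int g_1(Th)^*\le\int g_1(Th^*)\le\int g_2(Th^*)=\int(T'g_2)h^*\le\int_0^s(T'g_2)^*(r)\,dr,
$$
where the first inequality is the Hardy--Littlewood inequality and $g_1\downarrow$; the second is Hardy's Lemma together with the HLP-monotonicity $Th\prec Th^*$ of $T$; the third is Hardy's Lemma together with $g_1\prec g_2$ and the fact that $Th^*$ is nonnegative decreasing (because $T$ is decreasing and $h^*\downarrow$); and the last holds because $0\le h^*\le1$ and $\int h^*\le s$. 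Taking the supremum over $h$ gives $\int_0^s(T'g_1)^*\le\int_0^s(T'g_2)^*$ for every $s>0$, i.e.\ $T'g_1\prec T'g_2$. The only remaining issues are routine measure-theoretic justifications — the identity $\int(T'g)h=\int g(Th)$ is Tonelli's theorem for the nonnegative functions in play, and the layer-cake formula is standard — handled exactly as in the proofs of Theorems~\ref{optimal1} and~\ref{optimal2}.
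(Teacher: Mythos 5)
Your proof is correct in substance, and for the harder inequality $\varphi_{\mathfrak R[T,X]}(t)\le\Psi_{T,X}(t)$ it takes a genuinely different route from the paper (the lower bound, testing with $g=\chi_{(0,t)}$, is identical in both). The paper never compares a general decreasing $g$ with $\chi_{(0,t)}$: it rewrites $\|T'\chi_{(0,t)}\|_{X'}$ by duality as $\sup\{\int_0^t Tf:\ f\downarrow,\ \|f\|_X\le1\}$ and, for each such $f$, compares the flat function $h_t=\bigl(\frac1t\int_0^t Tf\bigr)\chi_{(0,t)}$ with $Tf$ inside $\mathfrak R[T,X]$, using $\|h_t\|_{\mathfrak R[T,X]}\le\|Tf\|_{\mathfrak R[T,X]}\le\|f\|_X\le 1$ --- i.e.\ it leans entirely on the fact, already established in Theorem~\ref{optimal1}, that $T:X\rightarrow\mathfrak R[T,X]$ has norm one. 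That makes the paper's upper bound a three-line corollary of Theorem~\ref{optimal1}. You instead collapse the supremum over all decreasing $g$ to the single competitor $\chi_{(0,t)}$ via $\bar g=g^{**}(t)\chi_{(0,t)}\prec g$ together with an order-preservation lemma for $T'$ (decreasing $g_1\prec g_2$ implies $T'g_1\prec T'g_2$), which you derive directly from the class-$\mathcal H$ axioms. That lemma is a genuine strengthening of Lemma~\ref{associate operator} and could be of independent use; the cost is a longer, more hands-on argument. One repair is needed: in your extremal description of $\int_0^s(T'g_1)^*$ the test functions must range over signed $h$ with $|h|\le1$ and $\int|h|\le s$, not $0\le h\le 1$; with nonnegative $h$ only, the supremum computes $\int_0^s\bigl((T'g_1)^+\bigr)^*$, and pointwise nonnegativity of $T'g_1$ is not guaranteed by the class-$\mathcal H$ axioms (Lemma~\ref{associate operator} only gives $g_1\prec T'g_1$). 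Your chain of inequalities goes through verbatim for signed $h$ after replacing $h^*$ by $|h|^*$, so this is a one-word fix rather than a flaw in the method.
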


\begin{proof}
We observe that, for every $t>0$,
$$
\varphi_{\mathfrak{R}[T,X]}(t)=\sup_{g\downarrow}\frac{\int_0^tg(r)\,dr}{\|T'g\|_{X'}} \geq \frac{\int_0^t\chi_{(0,t)}(r)\,dr}{\|T'\chi_{(0,t)}\|_{X'}}=\Psi_{T,X}(t).
$$

For the converse inequality, given $t>0$, since $T$ is of class $\mathcal H$, note that
\begin{align}\label{normaT'chi}
\|T'\chi_{[0,t]}\|_{X'}&=\sup\bigg\{\int_0^\infty (T'\chi_{[0,t]})(r)\,f(r)\,dr: \|f\|_X\leq1\bigg\}\\
\nonumber &=\sup\bigg\{\int_0^tTf(r)\,dr:f \, \mathrm{decreasing,} \, \|f\|_X\leq1\bigg\}.
\end{align}
Now, for any such $f$, let us consider
$$
h_t(s)=\Big(\frac1t\int_0^tTf(r)\,dr\Big)\,\chi_{[0,t]}(s).
$$
By \cite[Theorem II.4.8]{Bennett-Sharpley}, we have that
$$
\Big(\frac1t\int_0^tTf (r)\,dr\Big)\,\varphi_{\mathfrak R[T,X]}(t)=\|h_t\|_{\mathfrak R[T,X]}\leq\|Tf\|_{\mathfrak R[T,X]}\leq\|f\|_X\leq 1,
$$
since the operator $T:X\rightarrow \mathfrak R[T,X]$ has norm 1. Hence, taking the supremum over $f$ and using \eqref{normaT'chi} we get
$$
\varphi_{\mathfrak R[T,X]}(t)\leq \frac{t}{\|T'\chi_{[0,t]}\|_{X'}}=\Psi_{T,X}(t).
$$

\end{proof}

\section{Optimal range for the Hardy operator}\label{sec3}

We are now going to study conditions on $X$ that guarantee the existence of the optimal range for the Hardy operator $S$. We will then consider particular cases in terms of the Lorentz  and Marcinkiewicz norms (see \cite{Soria-Tradacete} for some related results). Recall that for the Hardy adjoint operator we have
$$
S'\chi_{(0,t)}(s)=\chi_{(0,t)}\log\Big(\frac{t}{s}\Big).
$$

Now, let us summarize the results of the previous section for the particular case of the Hardy operator:

\begin{proposition}\label{prop41}
Let $X$ be an r.i.\  space. Then, the following are equivalent:
\begin{enumerate}[(i)]
\item There exists an  r.i.\ optimal  range for the Hardy operator $S$ on $X$.
\item $S:X\rightarrow L^1+L^\infty$ is bounded.
\item $\chi_{(0,1)}(t)\log\big({1}/{t}\big)=\log^+(1/t)\in X'$.
\item $S'\chi_{(0,t)}\in X'$, for every $t>0$.
\end{enumerate}
Moreover, if any of these conditions holds, then the  r.i.\ optimal range for Hardy operator on $X$ is given by
$$
\mathfrak{R}[S,X]=\bigg\{f\in L^1+L^\infty: \|f\|_{\mathfrak{R}[S,X]}=\sup_{g\downarrow}\frac{\int_0^\infty f(r) g(r)\,dr}{\|S'g\|_{X'}}<\infty\bigg\}.
$$ 
\end{proposition}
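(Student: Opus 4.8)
The plan is to reduce everything to the general machinery of Section~\ref{sec2}. Since the Hardy operator $S$ is of class $\mathcal H$ (recalled just after the definition of class $\mathcal H$, cf.\ \cite[Proposition II.3.2]{Bennett-Sharpley}) and its associate operator is $S'$, with $S'\chi_{(0,t)}(s)=\chi_{(0,t)}(s)\log(t/s)$, Proposition~\ref{existence of range} applied to $T=S$ yields at once the equivalences $(i)\Leftrightarrow(ii)\Leftrightarrow(iv)$ of the present statement, together with the ``moreover'' assertions: that $\mathfrak{R}[S,X]$ is the r.i.\ optimal range and that $S:X\to\mathfrak{R}[S,X]$ has norm one, the displayed formula for the norm being nothing but the definition of $\mathfrak{R}[T,X]$ with $T=S$. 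Thus the only genuinely new content is to splice condition $(iii)$ into this chain, for which it suffices to establish $(iii)\Leftrightarrow(iv)$.

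The implication $(iv)\Rightarrow(iii)$ is immediate: taking $t=1$ in $(iv)$ gives $S'\chi_{(0,1)}=\log^+(1/\cdot)\in X'$. For $(iii)\Rightarrow(iv)$ I would fix $t>0$, recall $S'\chi_{(0,t)}(s)=\log^+(t/s)$, and combine a pointwise estimate with the fact that the associate space $X'$ is again an r.i.\ space (so (P2) and (P4) apply to it). If $t\le1$, then $0\le\log^+(t/s)\le\log^+(1/s)$ for every $s>0$ (for $s<t$ since $\log t\le0$, and otherwise the left-hand side vanishes), hence $(S'\chi_{(0,t)})^*\le(\log^+(1/\cdot))^*$ and $S'\chi_{(0,t)}\in X'$ by (P2) for $X'$ and hypothesis $(iii)$. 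If $t>1$, I would instead use
$$
S'\chi_{(0,t)}(s)=\log^+(t/s)\le(\log t)\,\chi_{(0,t)}(s)+\log^+(1/s),\qquad s>0,
$$
which one checks by splitting into $s<1$ (equality), $1\le s<t$ (there $\log(t/s)\le\log t$), and $s\ge t$ (both sides zero). The first summand on the right lies in $X'$ by (P4) for $X'$, being a scalar multiple of a characteristic function of a set of finite measure, and the second lies in $X'$ by $(iii)$; since $X'$ is a Banach function lattice the sum lies in $X'$, and a final application of (P2) for $X'$ gives $S'\chi_{(0,t)}\in X'$.

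The only step requiring genuine (though modest) care is the case $t>1$ of $(iii)\Rightarrow(iv)$, where the extra $\log t$ growth has to be absorbed; this is precisely the point at which the explicit form of $S'\chi_{(0,t)}$ is used rather than the abstract class $\mathcal H$ formalism, and the splitting displayed above together with (P4) for $X'$ handles it. Everything else in the statement is a direct reading of Proposition~\ref{existence of range} specialized to $T=S$.
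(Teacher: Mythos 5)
Your proposal is correct and follows the route the paper itself intends: the paper offers no separate proof of this proposition, presenting it as a direct specialization of Proposition~\ref{existence of range} to $T=S$ (which is exactly your reduction), and your pointwise splitting correctly supplies the only link not already covered there, namely $(iii)\Leftrightarrow(iv)$. For the record, $(iii)\Rightarrow(iv)$ also follows in one line from the boundedness of the dilation operators on the r.i.\ space $X'$, since $S'\chi_{(0,t)}=E_{1/t}\,S'\chi_{(0,1)}$, but your elementary estimate with the extra $(\log t)\,\chi_{(0,t)}$ term absorbed via (P4) is equally valid.
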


Proposition~\ref{fundamental function range}, yields that for any r.i. space $X$ such that $\log^+(1/t)\in X'$, the fundamental function of the space $\mathfrak R[S,X]$ is given by
$$
\varphi_{\mathfrak{R}[S,X]}(t)=\Psi_{S,X}(t)=\frac{t}{\|S'\chi_{(0,t)}\|_{X'}}.
$$

\begin{remark}{\rm
It is easy to see that 
$$
X\subset \mathfrak{R}[S,X]_0=\Big\{f\in L^1+L^\infty:f^{**}\leq (Sg^*)^{**},\textrm{ for some  }g\in X\Big\}.
$$
In fact, if $f\in X$, then taking $g=f^*\in X$, we get that $f^{**}=S(g)\le (Sg)^{**}$. In particular, if the upper Boyd index of $X$ (see \cite[Definition III.5.12]{Bennett-Sharpley}) satisfies that $\overline{\alpha}_X<1$ (which is equivalent  to the boundedness of  the operator $S:X\rightarrow X$), then
$$
X=\mathfrak{R}[S,X]_0=\mathfrak{R}[S,X].
$$
}
\end{remark}

We now study the optimal range for concrete examples of rearrangement invariant spaces.

\subsection{Lorentz spaces}

Note that Proposition~\ref{prop41}, yields that $S:\Lambda_\varphi\rightarrow L^1+L^\infty$ is bounded if and only if the function $\varphi$ satisfies that:
\begin{equation}\label{logc}
\varphi(t)\geq Ct\log(1+1/t).
\end{equation}
This fact was already proved in \cite{Soria-Tradacete}.

For an r.i.\ space $X$, it is clear that if $S:X\rightarrow L^1+L^\infty$ is bounded, then so is $S:\Lambda_{\varphi_X}\rightarrow L^1+L^\infty$ and, in particular, $\varphi_X(t)\geq Ct\log(1+1/t)$. However, the converse is not true for an arbitrary r.i.\  space. In fact, for the Marcinkiewicz space $X=M_{t\log(1+1/t)}$ it holds that the boundedness of $S:X\rightarrow L^1+L^\infty$ is given by the inequality:
$$
\int_0^1(Sf)^*(t)\,dt\le C\sup_{t>0}f^{**}(t)t\log(1+1/t),
$$
which, by  \cite[Theorem 6.4]{CPSS},   is equivalent to
$$
\int_0^1\frac{1}{t\log(1+1/t)}\,dt<\infty,
$$
and this is trivially false, since
$$
\int_0^1\frac{1}{t\log(1+1/t)}\,dt\approx\int_0^\infty\frac{du}{u}=\infty.
$$
Thus, $S:M_{t\log(1+1/t)}\rightarrow L^1+L^\infty$ is not bounded.
\medskip

We are now interested in estimating the fundamental function of the range space $\mathfrak{R}[S,\Lambda_\varphi]$. In \cite[Proposition 4.6]{Soria-Tradacete}, this was studied for   $\mathfrak{R}[S,\Lambda_\varphi]_0$, and  it was shown that the fundamental function satisfies that  
$$
\varphi_{\mathfrak{R}[S,\Lambda_\varphi]_0}(t)=\widetilde{\varphi}(t):=\inf_{r>0}\frac{t\varphi(r)}{r\log(1+\frac{t}{r})}.
$$
It follows that, essentially,  the same holds for $\mathfrak{R}[S,\Lambda_\varphi]$. 

\begin{proposition}\label{fundamental range lorentz}
Let  $\varphi$ be a quasiconcave function satisfying \eqref{logc}. Then
$$
\widetilde{\varphi}(t)=\varphi_{\mathfrak{R}[S,\Lambda_\varphi]_0}(t)\geq\varphi_{\mathfrak{R}[S,\Lambda_\varphi]}(t)\geq\Psi_{S,\Lambda_\varphi}(t)\geq\frac13\widetilde{\varphi}(t).
$$
\end{proposition}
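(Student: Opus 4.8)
The strategy is to establish the chain of inequalities from left to right, noting that the leftmost equality $\widetilde{\varphi}(t)=\varphi_{\mathfrak{R}[S,\Lambda_\varphi]_0}(t)$ is already known from \cite[Proposition 4.6]{Soria-Tradacete}, and the third inequality $\varphi_{\mathfrak{R}[S,\Lambda_\varphi]}(t)=\Psi_{S,\Lambda_\varphi}(t)\geq\frac13\widetilde{\varphi}(t)$ will reduce, via Proposition~\ref{fundamental function range}, to an explicit estimate on $\Psi_{S,\Lambda_\varphi}$. So the two genuine tasks are: (a) the middle inequality $\varphi_{\mathfrak{R}[S,\Lambda_\varphi]_0}(t)\geq\varphi_{\mathfrak{R}[S,\Lambda_\varphi]}(t)$, and (b) the lower bound $\Psi_{S,\Lambda_\varphi}(t)\geq\frac13\widetilde{\varphi}(t)$.

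For (a): by Remark~\ref{comparison} we have the embedding $\mathfrak{R}[S,\Lambda_\varphi]_0\subseteq\mathfrak{R}[S,\Lambda_\varphi]$ with $\|f\|_{\mathfrak{R}[S,\Lambda_\varphi]}\leq\|f\|_{\mathfrak{R}[S,\Lambda_\varphi]_0}$. Evaluating both norms on a characteristic function $\chi_E$ with $|E|=t$ immediately gives $\varphi_{\mathfrak{R}[S,\Lambda_\varphi]}(t)\leq\varphi_{\mathfrak{R}[S,\Lambda_\varphi]_0}(t)$, which is exactly the first displayed inequality of the Proposition. The remaining inequality $\varphi_{\mathfrak{R}[S,\Lambda_\varphi]}(t)\geq\Psi_{S,\Lambda_\varphi}(t)$ is the trivial half of Proposition~\ref{fundamental function range} (test the supremum in the definition of $\varphi_{\mathfrak{R}[S,\Lambda_\varphi]}$ with $g=\chi_{(0,t)}$), and in fact Proposition~\ref{fundamental function range} gives equality there.

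For (b), which I expect to be the main obstacle: I need to compute, or at least bound below, $\Psi_{S,\Lambda_\varphi}(t)=t/\|S'\chi_{(0,t)}\|_{(\Lambda_\varphi)'}$. The first step is to use the known formula for the associate norm of a Lorentz space, $\|h\|_{(\Lambda_\varphi)'}=\sup_{r>0}\varphi(r)^{-1}\int_0^r h^*(s)\,ds$ (see \cite[Theorem 2.4.7]{CRS}), applied to $h(s)=S'\chi_{(0,t)}(s)=\log^+(t/s)$, which is already decreasing, so $h^*=h$. Thus $\|S'\chi_{(0,t)}\|_{(\Lambda_\varphi)'}=\sup_{r>0}\varphi(r)^{-1}\int_0^{\min(r,t)}\log(t/s)\,ds$. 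Computing $\int_0^\rho\log(t/s)\,ds=\rho\log(t/\rho)+\rho=\rho\log(et/\rho)$ for $\rho\leq t$, I get an explicit supremum over $r$, and the point is to compare $\log(et/\rho)$ with $\log(1+t/\rho)$ up to the constant $3$: one checks that $\log(1+t/\rho)\leq\log(et/\rho)\leq 3\log(1+t/\rho)$ for $0<\rho\leq t$ (the right-hand bound being the one that costs the factor $3$, verified by splitting into the ranges $\rho\leq t/2$ and $t/2<\rho\leq t$, or by a direct monotonicity argument on $u=t/\rho\geq1$). Feeding this back yields $\|S'\chi_{(0,t)}\|_{(\Lambda_\varphi)'}\leq 3\sup_{r>0}\varphi(r)^{-1}r\log(1+t/r)$ (after absorbing the case $r>t$ into the supremum using quasiconcavity of $\varphi$, since for $r>t$ the integral is only over $(0,t)$ and the bound $t\log(1+t/t)\leq t\log(1+t/r)\cdot(\text{const})$ needs the monotonicity of $r\mapsto r\log(1+t/r)$ or a comparison with the value at $r=t$). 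Therefore $\Psi_{S,\Lambda_\varphi}(t)\geq\frac13\,t\,\big(\sup_{r>0}\varphi(r)^{-1}r\log(1+t/r)\big)^{-1}=\frac13\inf_{r>0}\frac{t\varphi(r)}{r\log(1+t/r)}=\frac13\widetilde{\varphi}(t)$, as desired. The care needed in handling the range $r>t$ in the supremum, and pinning down exactly where the constant $3$ is spent, is the delicate part; everything else is bookkeeping with the definitions and the already-cited associate-space formula.
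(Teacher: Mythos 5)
Your proposal is correct and follows essentially the same route as the paper: the first two inequalities come from the cited result and the norm-one embedding $\mathfrak{R}[S,\Lambda_\varphi]_0\subseteq\mathfrak{R}[S,\Lambda_\varphi]$ together with Proposition~\ref{fundamental function range}, and the final bound rests on the same computation (your $\int_0^\rho\log(t/s)\,ds=\rho\log(et/\rho)$ is exactly the paper's $\rho\,SS'\chi_{(0,t)}(\rho)=\rho(1+\log(t/\rho))$ for $\rho\le t$), the same comparison $\log(et/r)\le 3\log(1+t/r)$, and the same absorption of the range $r>t$ by comparing with the value at $r=t$.
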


\begin{proof}
As was mentioned before, the first identity was proved in \cite{Soria-Tradacete}. The embedding  $\mathfrak R[S,\Lambda_\varphi]_0\subset \mathfrak R[S,\Lambda_\varphi]$, implies the inequality $\varphi_{\mathfrak{R}[S,\Lambda_\varphi]_0}(t)\geq\varphi_{\mathfrak{R}[S,\Lambda_\varphi]}(t)$. We also have that
$$
\varphi_{\mathfrak{R}[S,\Lambda_\varphi]}(t)=\sup_{g\downarrow}\frac{\int_0^tg(r)\,dr}{\|S'g\|_{M_{\varphi'}}} \geq \frac{t}{\|S'\chi_{(0,t)}\|_{M_{\varphi'}}}=\Psi_{S,\Lambda_\varphi}(t).
$$

Finally, in order to show that $\Psi_{S,\Lambda_\varphi}(t)\geq\frac13\widetilde{\varphi}(t)$, note first that for $t\geq r$ we have
$$
1+\log\Big(\frac{t}{r}\Big)=\log\Big(\frac{et}{r}\Big)\leq\log\Big(\frac{3t}{r}\Big)\leq\log\Big(\Big(1+\frac{t}{r}\Big)^3\Big)=3\log\Big(1+\frac{t}{r}\Big).
$$

Now, for $t>0$ it holds that
\begin{align*}
\Psi_{S,\Lambda_\varphi}(t) & =  \frac{t}{\|S'\chi_{(0,t)}\|_{M'_\varphi}} =\inf_{r>0}\frac{\varphi(r)t}{rSS'\chi_{(0,t)}(r)}\\
 & = \min\Big\{\inf_{r\leq t}\frac{\varphi(r)t}{r(1+\log(\frac{t}{r}))},\inf_{r>t}\frac{\varphi(r)t}{r\frac{t}{r}}\Big\} =  \inf_{r\leq t}\frac{\varphi(r)t}{r(1+\log(\frac{t}{r}))}\geq\frac13\widetilde{\varphi}(t).
\end{align*}

\end{proof}

The following result characterizes those Lorentz spaces whose r.i. optimal range for the Hardy operator is again a Lorentz space.

\begin{proposition}\label{optimal range lorentz lorentz}
Let $\varphi$ be a quasi-concave function satisfying \eqref{logc}. The r.i. optimal range space is given by 
$$
\mathfrak{R}[S,\Lambda_\varphi]=\Lambda_{\widetilde\varphi},
$$
if and only if
$$
\int_t^\infty \frac{\widetilde{\varphi}(s)}{s^2}\,ds\lesssim \frac{\varphi(t)}{t}.
$$
\end{proposition}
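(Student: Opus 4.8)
The plan is to strip the statement down, via two duality reductions, to an elementary weighted inequality, and then check the latter by hand. Throughout, write $\widetilde\varphi(t)=\inf_{r>0}\frac{t\varphi(r)}{r\log(1+t/r)}$ as in Proposition~\ref{fundamental range lorentz}; note first that $\widetilde\varphi$ is quasiconcave (from the formula, $t\mapsto\widetilde\varphi(t)$ is increasing since each $t\mapsto t/\log(1+t/r)$ is, and $t\mapsto\widetilde\varphi(t)/t=\inf_r\varphi(r)/(r\log(1+t/r))$ is decreasing), so $\Lambda_{\widetilde\varphi}$ is a genuine r.i.\ space. By Propositions~\ref{fundamental function range} and \ref{fundamental range lorentz}, $\varphi_{\mathfrak R[S,\Lambda_\varphi]}=\Psi_{S,\Lambda_\varphi}$ is equivalent to $\widetilde\varphi$, and since $\Lambda_{\varphi_X}\subseteq X$ for every r.i.\ space $X$, this gives the inclusion $\Lambda_{\widetilde\varphi}\subseteq\mathfrak R[S,\Lambda_\varphi]$ always. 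Hence $\mathfrak R[S,\Lambda_\varphi]=\Lambda_{\widetilde\varphi}$ if and only if $\mathfrak R[S,\Lambda_\varphi]\subseteq\Lambda_{\widetilde\varphi}$, and by Theorem~\ref{optimal1} (the operator $S\colon\Lambda_\varphi\to\mathfrak R[S,\Lambda_\varphi]$ is bounded and $\mathfrak R[S,\Lambda_\varphi]$ is the \emph{smallest} r.i.\ range of $S$ on $\Lambda_\varphi$) this is in turn equivalent to the boundedness of $S\colon\Lambda_\varphi\to\Lambda_{\widetilde\varphi}$.

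Next I would pass to the associate operator. Since $(S')'=S$ and r.i.\ spaces satisfy $X''=X$, boundedness of $S\colon\Lambda_\varphi\to\Lambda_{\widetilde\varphi}$ is equivalent to boundedness of $S'\colon(\Lambda_{\widetilde\varphi})'\to(\Lambda_\varphi)'$; recalling $(\Lambda_\psi)'=M_{t/\psi}$ with $\|h\|_{(\Lambda_\psi)'}=\sup_{t>0}\frac1{\psi(t)}\int_0^t h^*(s)\,ds$, one must decide when $S'\colon M_{t/\widetilde\varphi}\to M_{t/\varphi}$ is bounded. Because $S'$ is HLP-monotone (it is of class $\mathcal H$) and $S'g^*$ is decreasing, $\|S'g\|_{(\Lambda_\varphi)'}\le\|S'g^*\|_{(\Lambda_\varphi)'}$ while $\|g\|_{(\Lambda_{\widetilde\varphi})'}=\|g^*\|_{(\Lambda_{\widetilde\varphi})'}$, so it suffices to test on decreasing functions $h$.

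For $h$ decreasing, $S'h$ is decreasing, and by Tonelli followed by an integration by parts,
\[
\int_0^t(S'h)(s)\,ds=\int_0^t h(s)\,ds+t\int_t^\infty\frac{h(r)}{r}\,dr=t\int_t^\infty\frac1{r^2}\Big(\int_0^r h(s)\,ds\Big)\,dr ,
\]
the last equality holding whenever $S'h$ is finite a.e.\ (then $\frac1R\int_0^R h\to0$). Writing $H(r)=\int_0^r h$, this gives $\|S'h\|_{(\Lambda_\varphi)'}=\sup_{t>0}\frac{t}{\varphi(t)}\int_t^\infty\frac{H(r)}{r^2}\,dr$ and $\|h\|_{(\Lambda_{\widetilde\varphi})'}=\sup_{t>0}\frac{H(t)}{\widetilde\varphi(t)}$. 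If the integral condition holds, every $h$ with $\|h\|_{(\Lambda_{\widetilde\varphi})'}\le1$ satisfies $H\le\widetilde\varphi$, whence $\|S'h\|_{(\Lambda_\varphi)'}\le\sup_t\frac{t}{\varphi(t)}\int_t^\infty\frac{\widetilde\varphi(r)}{r^2}\,dr\lesssim1$, so $S'$ is bounded. Conversely, I would test on $h_0=(\widehat{\widetilde\varphi}\,)'$, the derivative of the least concave majorant $\widehat{\widetilde\varphi}$ of $\widetilde\varphi$: it is decreasing, $H_0=\widehat{\widetilde\varphi}$, and $\|h_0\|_{(\Lambda_{\widetilde\varphi})'}=\sup_t\widehat{\widetilde\varphi}(t)/\widetilde\varphi(t)\le2$ by quasiconcavity; boundedness of $S'$ then forces $\sup_t\frac{t}{\varphi(t)}\int_t^\infty\frac{\widetilde\varphi(r)}{r^2}\,dr\le\sup_t\frac{t}{\varphi(t)}\int_t^\infty\frac{\widehat{\widetilde\varphi}(r)}{r^2}\,dr=\|S'h_0\|_{(\Lambda_\varphi)'}<\infty$, i.e.\ the integral condition.

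The only delicate points are the justification of the displayed identity for $\int_0^t S'h$—specifically that the boundary term $\frac1R\int_0^R h$ vanishes, which for decreasing $h$ follows from $\int^\infty h(r)/r\,dr<\infty$ (otherwise $S'h\equiv\infty$ and there is nothing to prove)—together with the two duality passages: using the \emph{optimality} of $\mathfrak R[S,\Lambda_\varphi]$ in the first step, and $(S')'=S$, $X''=X$ in the second. Everything else (the always-valid inclusion $\Lambda_{\widetilde\varphi}\subseteq\mathfrak R[S,\Lambda_\varphi]$ coming from $\Lambda_{\varphi_X}\subseteq X$ and $\Psi_{S,\Lambda_\varphi}\approx\widetilde\varphi$, and the Marcinkiewicz-norm computations) is routine.
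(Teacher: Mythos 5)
Your argument is correct, and it coincides with the paper's proof up to and including the key reduction: both establish $\Lambda_{\widetilde{\varphi}}\subseteq\mathfrak{R}[S,\Lambda_\varphi]$ from the equivalence $\varphi_{\mathfrak{R}[S,\Lambda_\varphi]}\approx\widetilde{\varphi}$, and both observe that, by optimality of the range, equality holds if and only if $S\colon\Lambda_\varphi\to\Lambda_{\widetilde{\varphi}}$ is bounded. Where you diverge is in the last step: the paper simply translates this boundedness into the inequality $\int_0^\infty f^{**}(s)\,d\widetilde{\varphi}(s)\lesssim\int_0^\infty f^*(s)\,d\varphi(s)$ and quotes the characterization of such $\Lambda$--$\Gamma$ embeddings from \cite[Theorem 4.1]{CPSS}, whereas you dualize to $S'\colon M_{t/\widetilde{\varphi}}\to M_{t/\varphi}$, reduce to decreasing test functions via HLP-monotonicity, and verify the resulting weighted inequality by hand through the identity $\int_0^t S'h(s)\,ds=t\int_t^\infty H(r)r^{-2}\,dr$. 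In effect you reprove the special case of the CPSS theorem that is needed here (the weight being $d\widetilde{\varphi}$ with $\widetilde{\varphi}$ quasiconcave); this costs about a page of computation but makes the proposition self-contained and makes transparent \emph{why} the answer is exactly $\int_t^\infty\widetilde{\varphi}(s)s^{-2}\,ds\lesssim\varphi(t)/t$, while the paper's route is shorter at the price of an external citation. One small point to patch in your necessity argument: if $\widetilde{\varphi}(0^+)>0$ (which can happen, e.g.\ when $\varphi(0^+)>0$), the least concave majorant $\widehat{\widetilde{\varphi}}$ has a jump at the origin and the primitive of its derivative is $H_0=\widehat{\widetilde{\varphi}}-\widehat{\widetilde{\varphi}}(0^+)$ rather than $\widehat{\widetilde{\varphi}}$ itself; the missing contribution to $\frac{t}{\varphi(t)}\int_t^\infty\widetilde{\varphi}(r)r^{-2}\,dr$ is then $\widehat{\widetilde{\varphi}}(0^+)/\varphi(t)\le 2\varphi(0^+)/\varphi(t)\le 2$, so the conclusion survives, but the step as written is not literally correct.
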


\begin{proof}
By Proposition~\ref{fundamental range lorentz}, we have that $\varphi_{\mathfrak{R}[S,\Lambda_\varphi]}\approx \widetilde\varphi$. In particular, this implies that
$$
\Lambda_{\widetilde\varphi}\subset \mathfrak{R}[S,\Lambda_\varphi].
$$

Now, the converse embedding $\mathfrak{R}[S,\Lambda_\varphi]\subset\Lambda_{\widetilde\varphi}$ holds, if and only if the Hardy operator $S:\Lambda_\varphi\rightarrow \Lambda_{\widetilde\varphi}$ is bounded. This is in turn equivalent to the integral inequality
$$
\int_0^\infty f^{**}(s) d\widetilde\varphi(s) \lesssim \int_0^\infty f^*(s) d\varphi(s),
$$
and using \cite[Theorem 4.1]{CPSS} this holds if and only if
$$
\int_t^\infty \frac{\widetilde{\varphi}(s)}{s^2}\,ds\lesssim \frac{\varphi(t)}{t}.
$$
\end{proof}

\begin{example}\label{rango fi alfa}
For $\alpha\geq1$, let us consider the functions $\varphi_\alpha(t)=t\log^\alpha(1+t^{-1/\alpha})$. It holds that
$$
\mathfrak{R}[S,\Lambda_{\varphi_{\alpha+1}}]=\Lambda_{\varphi_\alpha}.
$$

Indeed, note first that if we denote by $\phi=\varphi_1$, then we have 
$$
\varphi_\alpha(t)=\phi(t^{1/\alpha})^\alpha.
$$
Using this identity, it can be easily seen that $\varphi_\alpha$ is a quasi-concave function satisfying $\varphi_\alpha(t)\geq\frac1\alpha t\log(1+1/t)$. Now, using that the function $\psi(t)=(1+t)\log(1+1/t)$ is decreasing for $t<1$, it can be shown that the fundamental function   satisfies
$$
\varphi_{\mathfrak{R}[S,\Lambda_{\varphi_{\alpha+1}}]}=\widetilde{\varphi_{\alpha+1}}=\varphi_{\alpha}.
$$

By Proposition~\ref{optimal range lorentz lorentz}, the statement reduces to proving the following estimate
$$
\int_t^\infty \frac{\varphi_\alpha(s)}{s^2}\,ds\lesssim \frac{\varphi_{\alpha+1}(t)}{t}.
$$

For $t<1$ we have
\begin{align*}
\int_t^\infty \frac{\varphi_\alpha(s)}{s^2}\,ds&=\int_t^\infty\frac1s\log^\alpha\Big(1+\frac1{s^{1/\alpha}}\Big)\,ds\approx \int_t^1\frac1s\log^\alpha\Big(1+\frac1{s^{1/\alpha}}\Big)\,ds + 1\\
&\leq\log^\alpha\Big(1+\frac1{t^{1/\alpha}}\Big)\log\Big(\frac1t\Big)+1\lesssim\log^{\alpha+1}\Big(1+\frac1{t^{1/\alpha}}\Big)\\
&=\frac{a+1}{a}\log^{\alpha+1}\Big(\Big(1+\frac1{t^{1/\alpha}}\Big)^{\frac\alpha{\alpha+1}}\Big)\lesssim \frac{\varphi_{\alpha+1}(t)}{t}.
\end{align*}
While for $t>1$ we have that 
\begin{align*}
\int_t^\infty \frac{\varphi_\alpha(s)}{s^2}\,ds&=\int_t^\infty\frac1s\log^\alpha\Big(1+\frac1{s^{1/\alpha}}\Big)\,ds\approx\int_t^\infty\frac{ds}{s^2}\\
&=\frac1t \approx \log^{\alpha+1}\Big(1+\frac1{t^{\alpha+1}}\Big)=\frac{\varphi_{\alpha+1}(t)}{t}.
\end{align*}
\end{example}

\subsection{Marcinkiewicz spaces} We study now the  r.i.\  optimal range for the Hardy operator acting on a Marcinkiewicz space $M_\varphi$. Let us start by computing the function $\Psi_{S,M_\varphi}$ defined  in \eqref{psif}:

\begin{lemma}\label{Psi Marcinkiewicz}
Let $\varphi$ be a quasiconcave function such that $ 1/\varphi$ is locally integrable at zero. Then, for $t>0$:
\begin{equation}\label{epmf}
\Psi_{S,M_\varphi}(t)=t\Big(\int_0^t\frac {ds}{\varphi(s)}\Big)^{-1}.
\end{equation}
\end{lemma}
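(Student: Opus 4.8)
The plan is to reduce everything to a norm computation in the associate space: by definition $\Psi_{S,M_\varphi}(t)=t/\|S'\chi_{(0,t)}\|_{(M_\varphi)'}$, so it suffices to prove that $\|S'\chi_{(0,t)}\|_{(M_\varphi)'}=\int_0^t ds/\varphi(s)$. I would begin by recalling that $S'\chi_{(0,t)}(s)=\log^+(t/s)$ is already a positive decreasing function; hence, using the definition of the associate norm together with the Hardy--Littlewood inequality (so that only decreasing competitors $h$ matter),
$$
\|S'\chi_{(0,t)}\|_{(M_\varphi)'}=\sup\Big\{\int_0^t\log(t/s)\,h(s)\,ds:\ h\downarrow\ge0,\ \|h\|_{M_\varphi}\le1\Big\}.
$$
For such an $h$, writing $H(s)=\int_0^s h$, the condition $\|h\|_{M_\varphi}\le1$ is exactly $H(s)\le s/\varphi(s)$ for every $s>0$ (since $h^{**}(s)=H(s)/s$), and passing the operator to the other factor gives $\int_0^t\log(t/s)h(s)\,ds=\int_0^tSh(s)\,ds=\int_0^t\frac{H(s)}{s}\,ds$.

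From here the estimate $\Psi_{S,M_\varphi}(t)\ge t\big(\int_0^t ds/\varphi(s)\big)^{-1}$ is immediate: $\int_0^t\frac{H(s)}{s}\,ds\le\int_0^t\frac{1}{\varphi(s)}\,ds$, so $\|S'\chi_{(0,t)}\|_{(M_\varphi)'}\le\int_0^t ds/\varphi(s)$. The one technical point here is the vanishing of the boundary term in the integration by parts, i.e. $\log(t/s)H(s)\to0$ as $s\to0^+$: this is exactly where local integrability of $1/\varphi$ at zero is used, since $H(s)\le s/\varphi(s)\le\int_0^s dr/\varphi(r)\to0$ and, along a dyadic sequence, $s/\varphi(s)$ is nonincreasing with a summable sum, so the product with $\log(t/s)$ still tends to $0$.

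For the reverse inequality I would exhibit an extremal $h$, namely the function with primitive $H(s)=\min\{s/\varphi(s),\,t/\varphi(t)\}$ (i.e. $h(s)=(s/\varphi(s))'$ on $(0,t)$ and $h=0$ afterwards). One checks directly that $\|h\|_{M_\varphi}=\sup_s\frac{\varphi(s)}{s}H(s)=1$ — the branch $H(s)=s/\varphi(s)$ on $(0,t)$ yields the constant value $1$, and on $(t,\infty)$ one gets $\frac{t}{\varphi(t)}\cdot\frac{\varphi(s)}{s}\le1$ because $\varphi(s)/s$ is decreasing — and that $\int_0^t\log(t/s)h(s)\,ds=\int_0^t\frac{H(s)}{s}\,ds=\int_0^t\frac{ds}{\varphi(s)}$. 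Combining the two bounds gives $\|S'\chi_{(0,t)}\|_{(M_\varphi)'}=\int_0^t ds/\varphi(s)$ and hence \eqref{epmf}. The step I expect to be the main obstacle is the admissibility of this extremal $h$ as a \emph{decreasing} function (equivalently, the concavity of $s/\varphi(s)$, which is the structural fact behind the identification $(M_\varphi)'=\Lambda_{\varphi_*}$ with $\varphi_*(s)=s/\varphi(s)$ and the formula $\|S'\chi_{(0,t)}\|_{(M_\varphi)'}=\int_0^t\log(t/s)\,d\varphi_*(s)$); once this is in place, the remaining computation is the routine duality-plus-Fubini argument sketched above.
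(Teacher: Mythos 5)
Your overall strategy --- compute $\|S'\chi_{(0,t)}\|_{(M_\varphi)'}$ by duality against decreasing competitors $h$ with $\|h\|_{M_\varphi}\le 1$ --- is genuinely different from the paper's. The paper does not run the duality at all: it invokes the identification $(M_\varphi)'=\Lambda_{t/\varphi(t)}$ and then computes the Lorentz norm of the explicit function $S'\chi_{(0,t)}(s)=\log^+(t/s)$ directly from its distribution function $\lambda(r)=te^{-r}$, so that $\|S'\chi_{(0,t)}\|_{(M_\varphi)'}=\int_0^\infty \frac{te^{-r}}{\varphi(te^{-r})}\,dr=\int_0^t\frac{ds}{\varphi(s)}$ after the substitution $s=te^{-r}$. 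Your upper bound is correct and clean: the reduction to decreasing $h$, the identity $\int_0^t\log(t/s)h(s)\,ds=\int_0^t H(s)/s\,ds$ (which is pure Tonelli, so the discussion of boundary terms in an integration by parts is not actually needed), and the constraint $H(s)\le s/\varphi(s)$ immediately give $\|S'\chi_{(0,t)}\|_{(M_\varphi)'}\le\int_0^t ds/\varphi(s)$.

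The gap is exactly where you suspect it: the lower bound. The proposed extremizer has $H(s)=s/\varphi(s)$ on $(0,t)$, and for $h=H'$ to be an admissible competitor it must be nonnegative and \emph{decreasing}, i.e.\ $s/\varphi(s)$ must be concave on $(0,t)$. Quasiconcavity of $\varphi$ only gives that $s/\varphi(s)$ is increasing with $\varphi(s)/s$ decreasing; it does not give concavity, and without it your computation $\|h\|_{M_\varphi}=\sup_s\varphi(s)H(s)/s$ also breaks down (it uses $h^*=h$). One cannot simply approximate either: for each fixed $s_0$ there is an admissible $h$ with $H(s_0)=s_0/\varphi(s_0)$, but a single decreasing $h$ need not realize $H(s)=s/\varphi(s)$ simultaneously on all of $(0,t)$, so the supremum of $\int_0^t H(s)/s\,ds$ over admissible $H$ is not obviously $\int_0^t ds/\varphi(s)$. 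The standard fact that closes this --- and which you correctly name --- is precisely $(M_\varphi)'=\Lambda_{t/\varphi(t)}$ with equality of norms on decreasing functions; but once you grant that, the whole duality detour is unnecessary and the paper's one-line distribution-function computation finishes the proof. So either supply a proof (or precise reference) for that identification, or switch to the paper's direct computation; as written, the reverse inequality is not established. (Minor slip: in your boundary-term discussion, $s/\varphi(s)$ is non\emph{de}creasing in $s$; the correct statement is that $a_k=2^{-k}t/\varphi(2^{-k}t)$ is a decreasing summable sequence, whence $k\,a_k\to0$.)
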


\begin{proof}
Indeed, for $s>0$ let us consider the function
$$
f_s(t)=S'\chi_{(0,s)}(t)=\chi_{(0,s)}(t)\log\Big(\frac{s}{t}\Big),
$$
whose distribution function is given by
$$
\lambda_{f_s}(r)=|\{t>0:f_s(t)>r\}|=se^{-r}.
$$

Hence, for $t>0$ we have
\begin{align*}
\Psi_{S,M_\varphi}(t) & = \frac{t}{\|S'\chi_{(0,t)}\|_{M'_\varphi}} = \frac{t}{\int_0^\infty \frac{te^{-r}}{\varphi(te^{-r})}dr}\\
 & =  \Big(\int_0^\infty\frac{dr}{e^r\varphi(te^{-r})}\Big)^{-1} =  t\Big(\int_0^t\frac {ds}{\varphi(s)}\Big)^{-1}.
\end{align*}
\end{proof}

Recall that if $ 1/\varphi$ is locally integrable, then the space $M_{t\left(\int_0^t\frac{ds}{\varphi(s)}\right)^{-1}}$ is the minimal r.i.\  space containing $\Lambda_\varphi^{1,\infty}$ \cite[Theorem 3.3]{Rodriguez-Soria}, where
$$
\Lambda_\varphi^{1,\infty}=\Big\{f:\sup_{t>0} f^*(t)\varphi(t)<\infty\Big\}.
$$
Note that in general the space $\Lambda_\varphi^{1,\infty}$ need not be normable, except when $\varphi$ belongs to the  Ari\~no and Muckenhoupt $B_1$ class \cite{ArMu,So3}. This allows us to prove the following:

\begin{theorem} \label{rangmar}
Let $\varphi$ be a quasiconcave function. Then,  $S:M_\varphi\rightarrow L^1+L^\infty$ is bounded if and only if $ 1/\varphi$ is locally integrable at zero. Moreover, in this case we have
$$
\mathfrak{R}[S,M_\varphi]=\mathfrak{R}[S,M_\varphi]_0=M_{\Psi_{S,M_\varphi}}.
$$
\end{theorem}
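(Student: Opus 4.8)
The plan is to prove the two assertions separately, reducing almost everything to results already established.

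\textbf{Boundedness.} For the equivalence, I would invoke Proposition~\ref{prop41}, by which $S:M_\varphi\to L^1+L^\infty$ is bounded exactly when $S'\chi_{(0,t)}\in(M_\varphi)'$ for every $t>0$. I then repeat the computation from the proof of Lemma~\ref{Psi Marcinkiewicz}, now \emph{without} assuming local integrability of $1/\varphi$: since $\lambda_{S'\chi_{(0,t)}}(r)=te^{-r}$, the same substitution $s=te^{-r}$ gives
$$
\|S'\chi_{(0,t)}\|_{(M_\varphi)'}=\int_0^t\frac{ds}{\varphi(s)},
$$
this identity holding whether or not the right-hand side is finite. Since $\varphi$ is increasing, $1/\varphi$ is bounded on every interval bounded away from $0$, so this quantity is finite for all $t>0$ if and only if $1/\varphi$ is locally integrable at zero; together with Proposition~\ref{prop41} this proves the first assertion.

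\textbf{The three spaces coincide.} Now assume $1/\varphi$ is locally integrable at zero and write $\Psi:=\Psi_{S,M_\varphi}$, so that by Lemma~\ref{Psi Marcinkiewicz}, $\Psi(t)=t\big(\int_0^t ds/\varphi(s)\big)^{-1}$ (one readily checks $\Psi$ is quasiconcave) and $\|f\|_{M_\Psi}=\sup_{t>0}\big(\int_0^t f^*(s)\,ds\big)\big(\int_0^t ds/\varphi(s)\big)^{-1}$. Two of the inclusions come for free: $\mathfrak{R}[S,M_\varphi]_0\subseteq\mathfrak{R}[S,M_\varphi]$ by Remark~\ref{comparison}, and $\mathfrak{R}[S,M_\varphi]\subseteq M_{\varphi_{\mathfrak{R}[S,M_\varphi]}}=M_\Psi$, using that every r.i.\ space is contained in the Marcinkiewicz space with the same fundamental function together with $\varphi_{\mathfrak{R}[S,M_\varphi]}=\Psi$ (Proposition~\ref{fundamental function range}). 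Hence the whole statement reduces to proving $M_\Psi\subseteq\mathfrak{R}[S,M_\varphi]_0$.

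\textbf{The key inclusion.} This is the only substantial point. Given $0\le f=f^*$ with $\|f\|_{M_\Psi}=1$, i.e.\ $\int_0^t f(s)\,ds\le\int_0^t ds/\varphi(s)$ for all $t>0$, I would let $G$ be the least concave majorant of the quasiconcave function $t\mapsto t/\varphi(t)$, so that $t/\varphi(t)\le G(t)\le 2t/\varphi(t)$, and set $g:=G'$, which is nonnegative, nonincreasing and satisfies $\int_0^t g=G(t)$. Then $g\in M_\varphi$ with $\|g\|_{M_\varphi}=\sup_{t>0}\frac{\varphi(t)}{t}G(t)\le 2$, while $Sg(t)=G(t)/t\ge 1/\varphi(t)$; hence $\int_0^t f\le\int_0^t ds/\varphi(s)\le\int_0^t Sg$, and since $g=g^*$ and $Sg$ is nonincreasing this is exactly $f^{**}\le(Sg^*)^{**}$. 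Therefore $\|f\|_{\mathfrak{R}[S,M_\varphi]_0}\le\|g\|_{M_\varphi}\le 2$, and by homogeneity $M_\Psi\subseteq\mathfrak{R}[S,M_\varphi]_0$ with embedding constant at most $2$. Together with the two free inclusions this closes the circle $M_\Psi\subseteq\mathfrak{R}[S,M_\varphi]_0\subseteq\mathfrak{R}[S,M_\varphi]\subseteq M_\Psi$, so all three spaces coincide (with equivalent norms). The main obstacle is precisely the choice of $g$: the naive candidate $g=1/\varphi$ need not belong to $M_\varphi$, and it is essential to dominate at the level of primitives, i.e.\ to replace $t/\varphi(t)$ by a comparable concave function and differentiate.
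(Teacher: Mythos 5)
Your proof is correct, and its core coincides with the paper's: the decisive step in both is the construction of a nonnegative decreasing function whose primitive is equivalent to the quasiconcave function $t/\varphi(t)$ (your least concave majorant $G$ with $g=G'$ is exactly the paper's function $h$ with $\int_0^t h(s)\,ds\approx t/\varphi(t)$), yielding $\|g\|_{M_\varphi}\lesssim 1$ and $f^{**}\le (Sg^*)^{**}$ for $f\in M_\Psi$, hence $M_\Psi\subseteq\mathfrak{R}[S,M_\varphi]_0$. Where you genuinely diverge is in the remaining inclusion and the overall organization: you close the single chain $M_\Psi\subseteq\mathfrak{R}[S,M_\varphi]_0\subseteq\mathfrak{R}[S,M_\varphi]\subseteq M_\Psi$, obtaining the last step from Proposition~\ref{fundamental function range} together with the general embedding $X\subseteq M_{\varphi_X}$, whereas the paper factors $S:M_\varphi\to\Lambda_\varphi^{1,\infty}\hookrightarrow M_{\Psi}$ and invokes the minimality of $M_\Psi$ over the (possibly non-normable) weak-type space $\Lambda_\varphi^{1,\infty}$ from \cite[Theorem~3.3]{Rodriguez-Soria}, proving $\mathfrak{R}[S,M_\varphi]=M_\Psi$ and $M_\Psi\subseteq\mathfrak{R}[S,M_\varphi]_0$ as separate steps. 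Your route is more economical and self-contained (no appeal to \cite{Rodriguez-Soria}, and the paper's direct proof of $M_\Psi\subseteq\mathfrak{R}[S,M_\varphi]$ becomes redundant); what the paper's detour buys is the extra structural information that $\mathfrak{R}[S,M_\varphi]$ is precisely the minimal r.i.\ space containing $\Lambda_\varphi^{1,\infty}$. One point you rightly flag and handle, which deserves the explicit remark: $G(0^+)=0$ (so that $G(t)=\int_0^t g$) does follow from the local integrability of $1/\varphi$, since $s/\varphi(s)\ge c>0$ near zero would force $\int_0^\epsilon ds/\varphi(s)=\infty$.
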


\begin{proof}
Proposition~\ref{prop41} gives us that $S:M_\varphi\rightarrow L^1+L^\infty$ is bounded, if and only if $\chi_{(0,1)}(t)\log\big({1}/{t}\big)\in (M_\varphi)'=\Lambda_{t/\varphi(t)}$. But,
$$
\int_0^1\log(1/t)\,d(t/\varphi(t))<\infty \iff \int_0^1\frac{dt}{\varphi(t)}<\infty.
$$
By Lemma~\ref{Psi Marcinkiewicz}, we have that
$$
\Psi_{S,M_\varphi}(t)=t\Big(\int_0^t\frac{ds}{\varphi(s)}\Big)^{-1}.
$$

Now, it is clear that a decreasing function $f$ belongs to $M_\varphi$ if and only if $Sf$ belongs to $\Lambda_{\varphi}^{1,\infty}$, since
$$
\|Sf\|_{\Lambda_\varphi^{1,\infty}} = \sup_{t>0} Sf(t)\varphi(t) = \|f\|_{M_\varphi}.
$$
In particular, we have that
$$
S:M_\varphi\rightarrow\Lambda_\varphi^{1,\infty}\hookrightarrow M_{\Psi_{S,M_\varphi}}
$$
is bounded. Since Theorem~\ref{optimal1} yields that $\mathfrak{R}[S,M_\varphi]$ is the r.i.\  optimal range, we must have
$$
\mathfrak{R}[S,M_\varphi]\subseteq M_{\Psi_{S,M_\varphi}}.
$$

Conversely, since $ {t}/{\varphi(t)}$ is the associate function to $\varphi$, in particular it is quasiconcave, and there exists a positive decreasing function $h$ such that
$$
\frac{t}{\varphi(t)}\approx\int_0^t h(s)\,ds.
$$
Observe that the function $h$ satisfies that
$$
\|h\|_{M_\varphi}=\sup_{t>0} Sh(t)\varphi(t)\approx\sup_{t>0}\frac{1}{\varphi(t)}\varphi(t)=1.
$$
Now, given $f\in \Lambda_\varphi^{1,\infty}$ we have that, for any decreasing function $g$
\begin{align*}
\int_0^\infty f^*(t)g(t)\,dt & \leq  \|f\|_{\Lambda_\varphi^{1,\infty}}\int_0^\infty\frac{g(t)}{\varphi(t)}\,dt \approx \|f\|_{\Lambda_\varphi^{1,\infty}}\int_0^\infty\frac{g(t)}{t}\int_0^t h(s)\,ds \,dt\\
 & =  \|f\|_{\Lambda_\varphi^{1,\infty}}\int_0^\infty S'g(t) h(t)\,dt\leq \|f\|_{\Lambda_\varphi^{1,\infty}}\|S'g\|_{M'_\varphi}\|h\|_{M_\varphi}.
\end{align*}
Therefore, we have that $f\in \mathfrak{R}[S,M_\varphi]$ with
$$
\|f\|_{\mathfrak{R}[S,M_\varphi]}=\sup_{g\downarrow}\frac{\int f^*(s)g(s)\,ds}{\|S'g\|_{M'_\varphi}}\lesssim\|f\|_{\Lambda_\varphi^{1,\infty}}.
$$
Now, since $\mathfrak{R}[S,M_\varphi]$ is an r.i.\  space satisfying $\Lambda_\varphi^{1,\infty}\subseteq\mathfrak{R}[S,M_\varphi]$, then \cite[Theorem~3.3]{Rodriguez-Soria}
$$
M_{\Psi_{S,M_\varphi}}\subseteq\mathfrak{R}[S,M_\varphi].
$$

So far we know that
$$
\mathfrak{R}[S,M_\varphi]_0\subseteq\mathfrak{R}[S,M_\varphi]=M_{\Psi_{S,M_\varphi}}.
$$
For the remaining  embedding, let $f\in M_{\Psi_{S,M_\varphi}}$. Then, for every $t>0$ we have
\begin{align*}
(Sf)^*(t) &\leq \|f\|_{M_{\Psi_{S,M_\varphi}}} \frac{1}{t}\int_0^t\frac{ds}{\varphi(s)} \approx \|f\|_{M_{\Psi_{S,M_\varphi}}} \frac{1}{t}\int_0^t\frac1s\int_0^s h(u)du\\
& =\|f\|_{M_{\Psi_{S,M_\varphi}}} SS(h)(t).
\end{align*}
Hence, $f\in \mathfrak{R}[S,M_\varphi]_0$, with
$$
\|f\|_{\mathfrak{R}[S,M_\varphi]_0}\leq\|f\|_{M_{\Psi_{S,M_\varphi}}}\|h\|_{M_\varphi}\lesssim \|f\|_{M_{\Psi_{S,M_\varphi}}}.
$$
\end{proof}

\begin{example}\label{exuii}{\rm Since $\varphi(t)=\max\{1,t\}$ satisfies the hypothesis of Theorem~\ref{rangmar} and, using \eqref{epmf}, we have that
$$
\Psi_{L^1\cap L^\infty}(t)\approx\frac{t}{\log(1+t)},
$$
then  $\mathfrak{R}[S,L^1\cap L^\infty]=\mathfrak{R}[S,L^1\cap L^\infty]_0=M_{\frac{t}{\log(1+t)}}$.
}
\end{example}

\begin{example}
As before, we have
$$
\mathfrak{R}[S,M_{\frac{t}{\log(1+t)}}]=M_{\frac{t}{\log^2(1+t^{1/2})}}.
$$
\end{example}

\section{Optimal domain vs. optimal range}\label{sec4}

In this section we focus on the relation between r.i.\ optimal range and optimal domain for the Hardy operator. Some of these results can also be extended to operators of class $\mathcal H$.

Let us start with the characterization of the r.i. optimal domain (and its existence) for the Hardy operator $S$.

\begin{theorem}
Given an r.i.\ space $X$, the following are equivalent:
\begin{enumerate}[{(i)}]
\item There exists the r.i.\ optimal domain for the Hardy operator into the space $X$.
\item $S:L^1\cap L^\infty\rightarrow X$ is bounded.
\item $\frac{1}{1+s}\in X$.
\end{enumerate} 
Moreover, under any of the above assumptions, the r.i.\  optimal domain for the Hardy operator $S$ into $X$ is given by
$$
\mathfrak{D}[S,X]=\Big\{f\in L^1+L^\infty: Sf^*\in X\Big\}
$$
endowed with the norm $\|f\|_{\mathfrak{D}[S,X]}=\|Sf^*\|_X$.
\end{theorem}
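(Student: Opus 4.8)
The plan is to decouple the statement into an \emph{analytic} equivalence $(ii)\Leftrightarrow(iii)$ and a \emph{structural} equivalence $(i)\Leftrightarrow(ii)$, the latter obtained by exhibiting $\mathfrak{D}[S,X]$ as a concrete r.i.\ space and checking it is the largest one mapped into $X$ by $S$. This mirrors the treatment of the optimal range in Proposition~\ref{existence of range} and Proposition~\ref{prop41}, with $L^1\cap L^\infty$ (the smallest r.i.\ space) playing the role that $L^1+L^\infty$ played there.

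For $(ii)\Rightarrow(iii)$ I would simply test the boundedness of $S:L^1\cap L^\infty\to X$ on $\chi_{(0,1)}$, for which $S\chi_{(0,1)}(t)=\min\{1,1/t\}$; since this function is decreasing and $\frac{1}{1+t}\le\min\{1,1/t\}$, property (P2) forces $\frac{1}{1+t}\in X$. For the converse $(iii)\Rightarrow(ii)$ the key point is the pointwise estimate $f^{**}(t)\le\min\{\|f\|_\infty,\|f\|_1/t\}\le\|f\|_{L^1\cap L^\infty}\min\{1,1/t\}\le 2\|f\|_{L^1\cap L^\infty}\,\frac{1}{1+t}$, valid for every $f\in L^1\cap L^\infty$; combined with (P2) this gives $\|Sf^*\|_X=\|f^{**}\|_X\le 2\|\tfrac{1}{1+t}\|_X\|f\|_{L^1\cap L^\infty}$, and then, since $S$ is of class $\mathcal{H}$ (so $Sf\prec Sf^*$) and $X$ satisfies (P2$^*$), we get $\|Sf\|_X\le\|Sf^*\|_X\le 2\|\tfrac{1}{1+t}\|_X\|f\|_{L^1\cap L^\infty}$. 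In passing this shows $L^1\cap L^\infty\subseteq\{f\in L^1+L^\infty:Sf^*\in X\}$, a fact I will reuse.

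For $(i)\Rightarrow(ii)$: the optimal domain, if it exists, is an r.i.\ space containing $L^1\cap L^\infty$ continuously, so $S$ is in particular bounded from $L^1\cap L^\infty$ into $X$. For $(ii)\Rightarrow(i)$, together with the explicit formula, I would set $\mathfrak{D}[S,X]=\{f\in L^1+L^\infty:Sf^*\in X\}$ with $\|f\|_{\mathfrak{D}[S,X]}=\|Sf^*\|_X=\|f^{**}\|_X$ and verify (P1)--(P5): the triangle inequality comes from the subadditivity $(f+g)^{**}\le f^{**}+g^{**}$ and the rest of (P1) from (P5) of $X$ (which makes $f^{**}\equiv 0$, hence $f=0$, when the norm vanishes); (P2) and the Fatou property (P3) are inherited from $X$ via $0\le f^*\le g^*\Rightarrow f^{**}\le g^{**}$ and via $f_n\uparrow f$ a.e.\ $\Rightarrow f_n^{**}\uparrow f^{**}$; (P4) holds because $\chi_E\in L^1\cap L^\infty\subseteq\mathfrak{D}[S,X]$ by the inclusion noted above (here is where $(ii)\Leftrightarrow(iii)$ enters); and (P5) follows from the monotonicity of $f^{**}$, namely $t\,f^{**}(t)\le\int_0^t f^{**}(s)\,ds\le C^X_{(0,t)}\|f^{**}\|_X$, so that $\int_E|f|\le|E|\,f^{**}(|E|)\le C^X_{(0,|E|)}\|f\|_{\mathfrak{D}[S,X]}$. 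Boundedness of $S:\mathfrak{D}[S,X]\to X$ is once more $\|Sf\|_X\le\|Sf^*\|_X=\|f\|_{\mathfrak{D}[S,X]}$ (class $\mathcal{H}$ plus (P2$^*$)), with norm exactly $1$ since equality holds on decreasing functions. Finally, optimality: if $W$ is any r.i.\ space with $S:W\to X$ bounded, then for $f\in W$ one has $f^*\in W$ with $\|f^*\|_W=\|f\|_W$, hence $Sf^*\in X$ and $\|f\|_{\mathfrak{D}[S,X]}=\|Sf^*\|_X\le\|S\|_{W\to X}\|f\|_W$, giving $W\hookrightarrow\mathfrak{D}[S,X]$; thus $\mathfrak{D}[S,X]$ is the r.i.\ optimal domain.

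I expect the main obstacle to be the verification that $\mathfrak{D}[S,X]$ is genuinely an r.i.\ space, specifically properties (P4) and (P5): these are precisely the places where one must invoke condition (iii) (equivalently (ii)) to know that the space is non-degenerate and that local integrals are dominated by the norm. The triangle inequality, though routine, also rests on the subadditivity of the map $f\mapsto f^{**}$, which one should cite rather than reprove.
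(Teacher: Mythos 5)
Your proposal is correct and follows essentially the same route as the paper: the implications $(i)\Rightarrow(ii)\Rightarrow(iii)$ are handled by the same trivial observations, and $(iii)\Rightarrow(i)$ is proved by verifying (P1)--(P5) for $\mathfrak{D}[S,X]$ and checking optimality via $\|f\|_{\mathfrak{D}[S,X]}=\|Sf^*\|_X\le\|S\|_{W\to X}\|f\|_W$, exactly as in the paper. The only cosmetic differences are that you verify (P4) through the inclusion $L^1\cap L^\infty\subseteq\mathfrak{D}[S,X]$ rather than via boundedness of dilations, and you correctly record $S\chi_{(0,1)}(s)=\min\{1,1/s\}$ (comparable to $\tfrac{1}{1+s}$ within a factor of $2$) where the paper writes an equality.
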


\begin{proof}
The implication \textit{(i)} $\Rightarrow$ \textit{(ii)} is clear since the space $L^1\cap L^\infty$ is contained in any r.i.\ space. Also, we have \textit{(ii)} $\Rightarrow$ \textit{(iii)} trivially since  $\frac{1}{1+s}=S\chi_{[0,1]}(s)$. In order to prove the implication \textit{(iii)} $\Rightarrow$ \textit{(i)} we will actually show that the space $\mathfrak{D}[S,X]$ provides the r.i. optimal domain.

Let us see first that $\mathfrak{D}[S,X]$ is an r.i.\  space. It is clear that $\|\alpha f\|_{\mathfrak{D}[S,X]}=|\alpha|\|f\|_{\mathfrak{D}[S,X]}$ and if $f=0$, then $\|f\|_{\mathfrak{D}[S,X]}=0$. Also, if $\|f\|_{\mathfrak{D}[S,X]}=0$, then $Sf^*=0$ which means that $f=0$. Now, if $f_1,f_2\in \mathfrak{D}[S,X]$, we have that $S(f_1+f_2)^*\leq Sf_1^*+Sf_2^*$, so $\|\cdot\|_{\mathfrak{D}[S,X]}$ defines a norm and (P1) is proved.

(P2) is immediate, and (P3) follows from the fact that $f_n\uparrow f$ implies $Sf_n^*\uparrow Sf^*$, and since $X$ satisfies (P3)
$$
\|Sf_n^*\|_X\uparrow\|Sf^*\|_X.
$$

In order to check (P4), notice that by hypothesis $S\chi_{(0,1)}(s)=\frac{1}{1+s}\in X$, and since the dilation operators are bounded on $X$ for every $t>0$ we have
$$
S\chi_{(0,t)}=E_{\frac1t}S\chi_{(0,1)}\in X.
$$

Finally, (P5) holds since for every $E$ with $|E|<\infty$ we have
$$
\int_E f(s)\, ds\leq \int Sf^* \chi_{(0,|E|)}(s)\, ds\leq\|f\|_{\mathfrak{D}[S,X]}\|\chi_{(0,|E|)}\|_{X'}.
$$

Therefore, $\mathfrak{D}[S,X]$ is a well-defined r.i.\  space, and clearly for $f\in \mathfrak{D}[S,X]$ we have
$$
\|Sf\|_X\leq\|Sf^*\|_X=\|f\|_{\mathfrak{D}[S,X]}.
$$
Thus, $S:\mathfrak{D}[S,X]\rightarrow X$ is bounded. Now, suppose that $Y$ is an r.i.\  space such that $S:Y\rightarrow X$ is bounded. Then, for every $f\in Y$ we have
$$
\|f\|_{\mathfrak{D}[S,X]} = \|Sf^*\|_X \leq \|S\| \|f\|_Y,
$$
and hence, $Y\subseteq \mathfrak{D}[S,X]$.
\end{proof}

Note that the construction of $\mathfrak{R}[S,X]$ and $\mathfrak D[S,X]$ is closely related to  the optimal rearrangement invariant domains for kernel operators studied in \cite{Delgado}. In that paper, given a kernel operator of the form
$$
Tf(x)=\int_0^1f(y)K(x,y)dy,\,\,\,x\in[0,1],
$$
satisfying that for every fixed $x\in[0,1]$,   $K(x,\cdot)$ is decreasing, the author considers the space $[T,X]_{\rm r.i.}=\{f :Tf^*\in X\}$, endowed with the norm
$$
\|f\|_{[T,X]_{\rm r.i.}}=\|Tf^*\|_{X}.
$$
It turns out that under these assumptions, this space is the optimal rearrangement invariant domain for the operator $T$.

In the case of Hardy's conjugate operator $S'$, this is a kernel operator which does not satisfy the monotonicity condition required above. In fact, it is not true that $S'(f+g)^*\leq S'f^*+S'g^*$ in general (take, for example, $f=\chi_{(0,\frac12)}$ and $g=\chi_{(\frac12,1)}$). However, for any functions $f$ and $g$, we have
$$
SS'(f+g)^*=S'S(f+g)^*\leq S'(Sf^*+Sg^*)=S(S'f^*+S'g^*).
$$
Thus, using \cite[Theorem II.4.6]{Bennett-Sharpley}, for any r.i.\  space $X$ it follows that
$$\|S'(f+g)^*\|_X\leq\|S'f^*\|_X+\|S'g^*\|_X,
$$
and hence  the expression $\|\cdot\|_{[S',X]_{\rm r.i.}}$ also defines a norm in this case. Moreover,  we have the following identification:
$$
[S',X]_{\rm r.i.}=(\mathfrak{R}[S,X'])'.
$$

For example, if $X=L^1+L^\infty$, using Example~\ref{exuii} we obtain that
$$
[S',L^1+L^\infty]_{\rm r.i.}=(\mathfrak{R}[S,L^1\cap L^\infty])'=(M_{\frac{t}{\log(1+t)}})'=\Lambda_{\log(1+t)}.
$$

\begin{proposition}
Let $X$ be an r.i. space such that $S^2:L^1\cap L^\infty\rightarrow X$ is bounded. We have that the r.i. optimal domain for $S^2$ into $X$ satisfies
$$
\mathfrak D[S^2,X]=\mathfrak D[S,\mathfrak D[S,X]].
$$
\end{proposition}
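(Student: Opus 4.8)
The plan is to reduce the statement to the explicit formula for the optimal domain of the Hardy operator given by the preceding theorem, namely $\mathfrak D[S,Y]=\{f\in L^1+L^\infty:Sf^*\in Y\}$ with $\|f\|_{\mathfrak D[S,Y]}=\|Sf^*\|_Y$, together with its analogue for $S^2$. First I would dispose of well-definedness. The hypothesis that $S^2:L^1\cap L^\infty\to X$ is bounded is equivalent to $S^2\chi_{(0,1)}\in X$, and since $S\chi_{(0,1)}$ is decreasing one has $\frac{1}{1+s}\le S\chi_{(0,1)}(s)\le S^2\chi_{(0,1)}(s)$, the middle function being pointwise below its average on $(0,t)$. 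Hence $\frac{1}{1+s}\in X$, so $\mathfrak D[S,X]$ exists; moreover $S\big(\frac{1}{1+s}\big)(t)=\frac{\log(1+t)}{t}\approx S^2\chi_{(0,1)}(t)\in X$, so $\frac{1}{1+s}\in\mathfrak D[S,X]$ and therefore $\mathfrak D[S,\mathfrak D[S,X]]$ exists as well; and $\mathfrak D[S^2,X]$ exists by hypothesis.

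Next I would observe that the proof of the preceding theorem carries over with $S^2$ in place of $S$. The only structural inputs used there are: the subadditivity $S(f+g)^*\le Sf^*+Sg^*$, which iterates to $S^2(f+g)^*\le S^2f^*+S^2g^*$; the identity $S^2\chi_{(0,t)}=E_{1/t}S^2\chi_{(0,1)}$, placing $S^2\chi_{(0,t)}$ in $X$ for every $t$; the chain $\int_E f\le\int_0^{|E|}f^*=|E|\,Sf^*(|E|)\le|E|\,S^2f^*(|E|)\le\int_0^{|E|}S^2f^*$ for property (P5); the estimate $\|S^2f\|_X\le\|S^2f^*\|_X$, from $S^2f\prec S^2f^*$ (as $S^2$ is of class $\mathcal H$) and (P2$^*$); and, for optimality, applying $S^2$ to $f^*$. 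Equivalently, this is the case of \cite{Delgado} corresponding to the kernel operator $S^2f(t)=\frac1t\int_0^t f(r)\log(t/r)\,dr$, whose kernel is decreasing in $r$. We conclude that $\mathfrak D[S^2,X]=\{f\in L^1+L^\infty:S^2f^*\in X\}$, with $\|f\|_{\mathfrak D[S^2,X]}=\|S^2f^*\|_X$.

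With these two descriptions in hand the identity is immediate, the key point being that $Sf^*$ is already a decreasing function, so $(Sf^*)^*=Sf^*$. Then for every $f\in L^1+L^\infty$,
$$
f\in\mathfrak D[S,\mathfrak D[S,X]]\iff Sf^*\in\mathfrak D[S,X]\iff S\big((Sf^*)^*\big)=S^2f^*\in X\iff f\in\mathfrak D[S^2,X],
$$
and simultaneously the norms agree:
$$
\|f\|_{\mathfrak D[S,\mathfrak D[S,X]]}=\|Sf^*\|_{\mathfrak D[S,X]}=\big\|S\big((Sf^*)^*\big)\big\|_X=\|S^2f^*\|_X=\|f\|_{\mathfrak D[S^2,X]}.
$$

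Since the last step is a one-line computation, the genuine work lies in the first two paragraphs: confirming that the hypothesis is exactly what makes $\mathfrak D[S,\mathfrak D[S,X]]$ well defined — one should resist assuming that $\mathfrak D[S,X]$ automatically satisfies the existence criterion for a further application of $\mathfrak D[S,\cdot]$ — and recording the $S^2$-version of the domain theorem, which the excerpt phrases only for $S$. Neither point is difficult, but both deserve to be made explicit.
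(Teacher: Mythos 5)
Your proof is correct, but it follows a different route from the paper's. The paper never writes down an explicit formula for $\mathfrak D[S^2,X]$: it proves the two inclusions abstractly via the universal property of optimal domains. For one inclusion it composes the bounded maps $S:\mathfrak D[S,\mathfrak D[S,X]]\to\mathfrak D[S,X]$ and $S:\mathfrak D[S,X]\to X$ to conclude that $S^2$ is bounded on the iterated domain, hence that domain embeds into $\mathfrak D[S^2,X]$; for the other it checks directly from the norm formula that $S:\mathfrak D[S^2,X]\to\mathfrak D[S,X]$ is bounded, via $\|Sf\|_{\mathfrak D[S,X]}\le\|S(Sf^*)\|_X\le\|S^2\|\,\|f\|_{\mathfrak D[S^2,X]}$, and invokes optimality of $\mathfrak D[S,\mathfrak D[S,X]]$. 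Your argument instead re-runs the domain theorem for $S^2$ to get the concrete description $\mathfrak D[S^2,X]=\{f:S^2f^*\in X\}$ with norm $\|S^2f^*\|_X$, after which the identity collapses to the observation that $(Sf^*)^*=Sf^*$. What your approach buys is an isometric identification of the two spaces and, more importantly, the explicit verification that $\tfrac{1}{1+s}\in\mathfrak D[S,X]$, so that the iterated domain is actually well defined --- a point the paper's proof takes for granted. What the paper's approach buys is brevity and the fact that it avoids having to recheck properties (P1)--(P5) for a new operator, relying only on the already-established optimality statements. Both are sound; your step $S\chi_{(0,1)}\le S^2\chi_{(0,1)}$ (from $g\le Sg$ for decreasing $g$) and the comparison $S\bigl(\tfrac{1}{1+\cdot}\bigr)(t)=\tfrac{\log(1+t)}{t}\approx S^2\chi_{(0,1)}(t)$ are exactly what is needed to close the existence gap you correctly flag.
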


\begin{proof}
Since $S:\mathfrak D[S,X]\rightarrow X$ and $S:\mathfrak D[S,\mathfrak D[S,X]]\rightarrow \mathfrak D[S,X]$ are bounded, we have that
$$
S^2:\mathfrak D[S,\mathfrak D[S,X]]\rightarrow X
$$ 
is bounded. Hence, $\mathfrak D[S^2,X]\subset\mathfrak D[S,\mathfrak D[S,X]]$. For the converse embedding, let us see that $S:\mathfrak D[S^2,X]\rightarrow \mathfrak D[S,X]$ is bounded. Given $f\in \mathfrak D[S^2,X]$, we have
$$
\|Sf\|_{\mathfrak D[S,X]}\leq\|Sf^*\|_{\mathfrak D[S,X]}=\|S(Sf^*)\|_X\leq\|S^2\|\|f\|_{\mathfrak D[S^2,X]},
$$
where $\|S^2\|$ is the norm of $S^2:\mathfrak D[S^2,X]\rightarrow X$. Thus, we also get the embedding $\mathfrak D[S,\mathfrak D[S,X]]\subset\mathfrak D[S^2,X]$.
\end{proof}

\begin{example}
It can be seen that for the space $M_{\frac{t}{\log^2(1+\sqrt{t})}}$, we have
$$
\mathfrak D[S^2,M_{\frac{t}{\log^2(1+\sqrt{t})}}]=\mathfrak D[S,\mathfrak D[S,M_{\frac{t}{\log^2(1+\sqrt{t})}}]]=\mathfrak D[S,M_{\frac{t}{\log(1+t)}}]=L^1\cap L^\infty.
$$
\end{example}

We will elaborate on the relation between r.i\ optimal domain and r.i\ optimal range by studying the following constructions.

\begin{definition}
Given an r.i\ space $X$ with $\log^+(\frac1s)\in X'$, let $\mathcal D_X$ denote the r.i.\  optimal domain for the Hardy operator into $\mathfrak{R}[S,X]$, i.e:
$$
\mathcal D_X=\mathfrak{D}[S,\mathfrak{R}[S,X]].
$$
Similarly, given an r.i.\ space $X$ with $\frac{1}{1+s}\in X$, let 
$$
\mathcal R_X=\mathfrak{R}[S,\mathfrak{D}[S,X]].
$$
\end{definition}

Note that the hypotheses on the space $X$ are necessary for the existence of the corresponding $\mathfrak{R}[S,X]$ and $\mathfrak{D}[S,X]$. The spaces $\mathcal D_X$ and $\mathcal R_X$ satisfy the following properties:

\begin{proposition} Suppose the space $X$ satisfies the conditions in the definition of $\mathcal D_X$ and $\mathcal R_X$. Then we have:
\begin{enumerate}[{(i)}]
\item $\mathcal R_X\subset X\subset \mathcal D_X$. 
\item If $\overline{\alpha}_X<1$, then $\mathcal R_X=X=\mathcal D_X$.
\item If $X\subset Y$, then $\mathcal D_X\subset \mathcal D_Y$ and $\mathcal R_X\subset \mathcal R_Y$.
\item $\mathfrak R[S,\mathcal D_X]=\mathfrak R[S,X]$ and $\mathfrak D[S,\mathcal R_X]=\mathfrak D[S,X]$. 
\item $\mathcal R_{\mathcal{R}_X}=\mathcal R_X$ and $\mathcal D_{\mathcal{D}_X}=\mathcal D_X$.
\end{enumerate}
\end{proposition}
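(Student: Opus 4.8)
The plan is to establish the five assertions roughly in the order stated, since each builds on the previous ones. For \textit{(i)}, I would use the two embeddings already noted in the earlier sections: from the Remark just after Proposition~\ref{prop41} we have $X\subset\mathfrak{R}[S,X]_0\subseteq\mathfrak{R}[S,X]$, and dually I would want $X\subset\mathfrak{D}[S,X]$. The latter follows directly because $Sf^*=f^{**}$ and $\|f^{**}\|_X\ge\|f^*\|_X$ is not what I want — rather, I should note that $S:X\to X$ need not be bounded, so instead I argue: $\mathcal D_X=\mathfrak D[S,\mathfrak R[S,X]]$ contains $X$ precisely because $S:X\to\mathfrak R[S,X]$ is bounded (Theorem~\ref{optimal1}), so $X$ is one of the competing domains and hence $X\subset\mathfrak D[S,\mathfrak R[S,X]]=\mathcal D_X$; symmetrically $\mathcal R_X=\mathfrak R[S,\mathfrak D[S,X]]\subset X$ follows because $S:\mathfrak D[S,X]\to X$ is bounded, so $X$ is one of the competing ranges for $S$ on $\mathfrak D[S,X]$, and the optimal (smallest) range $\mathfrak R[S,\mathfrak D[S,X]]$ must be inside it. For \textit{(ii)}, when $\overline\alpha_X<1$ we have $S:X\to X$ bounded, hence $\mathfrak R[S,X]=X$ and $\mathfrak D[S,X]=X$ by the Remark and the analogous statement for domains, and then $\mathcal R_X=\mathfrak R[S,X]=X=\mathfrak D[S,X]=\mathcal D_X$.

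For \textit{(iii)}, monotonicity is essentially formal from the defining formulas: if $X\subset Y$ then $Y'\subset X'$, so $\|S'g\|_{X'}\ge c\|S'g\|_{Y'}$ (with the embedding constant), whence $\|f\|_{\mathfrak R[S,Y]}\le C\|f\|_{\mathfrak R[S,X]}$, giving $\mathfrak R[S,X]\subset\mathfrak R[S,Y]$; similarly $\mathfrak D[S,X]=\{f:Sf^*\in X\}\subset\{f:Sf^*\in Y\}=\mathfrak D[S,Y]$. Composing these two monotonicities yields $\mathcal D_X\subset\mathcal D_Y$ and $\mathcal R_X\subset\mathcal R_Y$. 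For \textit{(iv)}, I would argue $\mathfrak R[S,\mathcal D_X]=\mathfrak R[S,X]$ as follows: since $S:\mathcal D_X\to\mathfrak R[S,X]$ is bounded (that is how $\mathcal D_X$ was defined), optimality of the range gives $\mathfrak R[S,\mathcal D_X]\subset\mathfrak R[S,X]$; conversely $X\subset\mathcal D_X$ by \textit{(i)}, so by \textit{(iii)} $\mathfrak R[S,X]\subset\mathfrak R[S,\mathcal D_X]$. The identity $\mathfrak D[S,\mathcal R_X]=\mathfrak D[S,X]$ is dual: $S:\mathfrak D[S,X]\to X$ factors through $\mathcal R_X=\mathfrak R[S,\mathfrak D[S,X]]$, i.e.\ $S:\mathfrak D[S,X]\to\mathcal R_X$ is bounded, so $\mathfrak D[S,X]\subset\mathfrak D[S,\mathcal R_X]$ by optimality of the domain; and $\mathcal R_X\subset X$ by \textit{(i)} plus \textit{(iii)} gives the reverse inclusion $\mathfrak D[S,\mathcal R_X]\subset\mathfrak D[S,X]$.

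Finally \textit{(v)} follows by unwinding \textit{(iv)}: $\mathcal R_{\mathcal R_X}=\mathfrak R[S,\mathfrak D[S,\mathcal R_X]]=\mathfrak R[S,\mathfrak D[S,X]]=\mathcal R_X$, using $\mathfrak D[S,\mathcal R_X]=\mathfrak D[S,X]$ from \textit{(iv)}; and dually $\mathcal D_{\mathcal D_X}=\mathfrak D[S,\mathfrak R[S,\mathcal D_X]]=\mathfrak D[S,\mathfrak R[S,X]]=\mathcal D_X$. I expect the main obstacle to be bookkeeping rather than a deep idea: one must be careful that all the relevant operators are genuinely bounded into the right spaces so that Theorems~\ref{optimal1} and the optimal-domain theorem apply, i.e.\ that the standing hypotheses $\log^+(1/s)\in X'$ and $1/(1+s)\in X$ propagate to the intermediate spaces $\mathfrak R[S,X]$ and $\mathfrak D[S,X]$ — this should be checked using Proposition~\ref{existence of range} (and its domain analogue) together with the embedding $X\subset\mathcal D_X$, $\mathcal R_X\subset X$ from part \textit{(i)}. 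It is also worth being explicit that all embeddings here are norm-one (or bounded with constant one), which is what makes the iterated identities exact rather than just up to equivalence.
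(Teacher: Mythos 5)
Your proof is correct, and for the substantive part \textit{(i)} it takes a genuinely different (softer) route than the paper. The paper establishes $\mathcal R_X\subset X$ and $X\subset\mathcal D_X$ by explicit dual-norm computations: it first shows $\|S'h\|_{\mathfrak D[S,X]'}\le\|h\|_{X'}$ for decreasing $h$ and feeds this into the supremum defining $\|\cdot\|_{\mathcal R_X}$, and then rewrites $\|f\|_{\mathcal D_X}$ as $\sup_h\int f^*\,S'h^*/\|S'h^*\|_{X'}\le\|f\|_X$. You instead invoke the universal properties directly: $S:X\to\mathfrak R[S,X]$ is bounded (Theorem~\ref{optimal1}), so $X$ competes as a domain and sits inside the largest one, $\mathcal D_X$; and $S:\mathfrak D[S,X]\to X$ is bounded, so the smallest range $\mathcal R_X$ sits inside $X$. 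Both arguments are valid; the paper's computation has the mild advantage of exhibiting the embeddings as norm-one maps explicitly, while yours is shorter, avoids duality, and makes transparent why \textit{(iv)} and \textit{(v)} then follow formally. Parts \textit{(ii)}--\textit{(iv)} match the paper's argument essentially verbatim (the paper phrases the reverse inclusion in \textit{(iv)} as $\mathfrak R[S,\mathcal D_X]=\mathcal R_{\mathfrak R[S,X]}\subset\mathfrak R[S,X]$ via \textit{(i)}, which is the same optimality fact you use), and in \textit{(v)} you substitute $\mathfrak D[S,\mathcal R_X]=\mathfrak D[S,X]$ directly where the paper applies the other half of \textit{(iv)} to $\mathfrak D[S,X]$ --- an immaterial difference. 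Your closing remark about verifying that the existence hypotheses propagate to $\mathfrak R[S,X]$ and $\mathfrak D[S,X]$ is a point the paper leaves implicit, and your justification (composition through $L^1\cap L^\infty\subset X$ and $X\hookrightarrow L^1+L^\infty$) is the right one.
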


\begin{proof}
\textit{(i)} For any decreasing function $h$ we have
$$
\|S'h\|_{\mathfrak{D}[S,X]'} = \sup_{g}\frac{\int S'h(s) g^*(s)\,ds}{\|Sg^*\|_X} = \sup_{g}\frac{\int h(s) Sg^*(s)\,ds}{\|Sg^*\|_X} \leq \|h\|_{X'}.
$$
Hence, it follows that
$$
\Vert f\Vert_{\mathcal R_X} = \sup_{h}\frac{\int_0^\infty f^*(s)h^*(s)\,ds}{\Vert S'(h^*)\Vert_{\mathfrak{D}[S,X]'}} \geq  \sup_{h}\frac{\int_0^\infty f^*(s)h^*(s)\,ds}{\|h^*\|_{X'}}  = \Vert f\Vert_X.
$$
This proves the embedding $\mathcal R_X\subset X$. For the second embedding, we have
$$
\Vert f\Vert_{\mathcal D_X}=\sup_{h}\frac{\int_0^\infty Sf^*(s)h^*(s)\,ds}{\Vert S'(h^*)\Vert_{X'}}=\sup_{h}\frac{\int_0^\infty f^*(s)S'h^*(s)\,ds}{\Vert S'(h^*)\Vert_{X'}}\le \Vert f\Vert_X.
$$

\textit{(ii)} In the particular case when $\overline{\alpha}_X<1$, which is equivalent to $S:X\rightarrow X$, we have that
$$
\mathfrak{R}[S,X]=\mathfrak{D}[S,X]=\mathcal{D}_X=\mathcal R_X=X.
$$

\textit{(iii)} If $X\subset Y$, then we have that $\mathfrak{R}[S,X]\subset \mathfrak{R}[S,Y]$ and $\mathfrak{D}[S,X]\subset \mathfrak{D}[S,Y]$. Iterating these two facts in the corresponding order yields the conclusion.

\textit{(iv)}  According to property \textit{(i)}, we have $X\subset \mathcal D_X$. So, by property \textit{(iii)}, we get that
$$
\mathfrak R[S,X]\subset \mathfrak R[S, \mathcal D_X].
$$
For the converse embedding, observe that
$$
\mathfrak R[S, \mathcal D_X]= \mathfrak R[S, \mathfrak{D}[S,\mathfrak{R}[S,X]]]=\mathcal R_{\mathfrak R[S,X]},
$$
which, again by property \textit{(i)} satisfies
$$
\mathcal R_{\mathfrak R[S,X]}\subset \mathfrak R[S,X].
$$
Thus, we have $\mathfrak R[S,\mathcal D_X]=\mathfrak R[S,X]$. The identity $\mathfrak D[S,\mathcal R_X]=\mathfrak D[S,X]$ is proved similarly.

\textit{(v)} This is a consequence of property \textit{(iv)}. Indeed, 
$$
\mathcal R_{\mathcal{R}_X}=\mathfrak R[S,\mathcal D_{\mathfrak D[S,X]}]=\mathfrak R[S,\mathfrak D[S,X]]=\mathcal R_X.
$$
And similarly,
$$
\mathcal D_{\mathcal{D}_X}=\mathfrak D[S,\mathcal R_{\mathfrak R[S,X]}]=\mathfrak{D}[S,\mathfrak{R}[S,X]]=\mathcal D_X.
$$
\end{proof}

\begin{example}
If we consider the function $\phi(t)=t\log(1+1/t)$, then
$$
\mathcal D_{\Lambda_\phi}=\Lambda_\phi.
$$
This follows from the fact that
$$
\mathfrak{R}[S,\Lambda_\phi]=L^1+L^\infty\,\, \textrm{ and }\,\, \mathfrak{D}[S,L^1+L^\infty]=\Lambda_{\phi}.
$$
Similarly, for the largest r.i.\ space $L^1+L^\infty$ we have
$$
\mathcal R_{L^1+L^\infty}=L^1+L^\infty.
$$
\end{example}

Note that this provides an equivalent condition to the existence of the r.i.\  optimal range (see Proposition~\ref{prop41}): given an r.i.\  space $X$,  the r.i.\  optimal range $\mathfrak R[S,X]$ exists if and only if $X\subset \Lambda_{\phi}$, with $\phi(t)=t\log(1+1/t)$.

In fact, the space $\mathfrak R[S,X]$ exists if and only if $S:X\rightarrow L^1+L^\infty$ is bounded, and this holds if and only if $X\subset
\mathfrak{D}[S,L^1+L^\infty]=\Lambda_\phi$.

This embedding also follows from Proposition~\ref{prop41}, using that $\chi_{[0,1]}\log\big(\frac{1}{\cdot}\big)\in X'$ and identifying $\Lambda_\phi$ as the Zygmund space $L\log L$ in $[0,1]$ \cite[IV.6]{Bennett-Sharpley}.

\begin{lemma}
Given $X$ such that $S:X\rightarrow L^1+L^\infty$, if $S:X\rightarrow \mathfrak{R}[S,X]$ is invertible (onto its image) then
$$X=\mathcal{D}_X.$$
\end{lemma}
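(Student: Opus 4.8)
The plan is to prove the inclusion $\mathcal{D}_X\subset X$, since the reverse inclusion $X\subset\mathcal{D}_X$ has already been established (it is part of the embedding $\mathcal{R}_X\subset X\subset\mathcal{D}_X$ proved above). Recall that, by the description of the r.i.\ optimal domain, $\mathcal{D}_X=\mathfrak{D}[S,\mathfrak{R}[S,X]]$ is the space of all $f\in L^1+L^\infty$ with $Sf^*\in\mathfrak{R}[S,X]$, equipped with the norm $\|f\|_{\mathcal{D}_X}=\|Sf^*\|_{\mathfrak{R}[S,X]}$. On the other hand, since $S:X\to\mathfrak{R}[S,X]$ is bounded (with norm one, by Proposition~\ref{existence of range}), the hypothesis that this operator is invertible onto its image means precisely that there is a constant $c>0$ with
$$
c\,\|g\|_X\leq\|Sg\|_{\mathfrak{R}[S,X]},\qquad\text{for all }g\in X.
$$

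Fix $f\in\mathcal{D}_X$; it suffices to show that $f^*\in X$. First I would approximate $f^*$ from below by the truncations $f_n=\min\{f^*,n\}\,\chi_{(0,n)}$. These form an increasing sequence with $f_n\uparrow f^*$ a.e., and, being bounded and supported on a set of finite measure, they satisfy $0\le f_n\le n\chi_{(0,n)}$, so $f_n\in X$ by (P2) and (P4). Each $f_n$ is decreasing, hence $Sf_n$ is decreasing and $0\le Sf_n\le Sf^*$, so that $(Sf_n)^*=Sf_n\le Sf^*=(Sf^*)^*$. Applying property (P2) in the r.i.\ space $\mathfrak{R}[S,X]$ we obtain
$$
\|Sf_n\|_{\mathfrak{R}[S,X]}\leq\|Sf^*\|_{\mathfrak{R}[S,X]}=\|f\|_{\mathcal{D}_X}.
$$

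Now apply the lower estimate above with $g=f_n$: this gives $\|f_n\|_X\leq c^{-1}\|f\|_{\mathcal{D}_X}$ for every $n$. Since $f_n\uparrow f^*$ a.e.\ and $X$ has the Fatou property (P3), we conclude that
$$
\|f^*\|_X=\lim_{n\to\infty}\|f_n\|_X\leq c^{-1}\|f\|_{\mathcal{D}_X}<\infty,
$$
so $f\in X$, and therefore $\mathcal{D}_X\subset X$, which together with $X\subset\mathcal{D}_X$ gives $X=\mathcal{D}_X$. The delicate point is that the lower bound provided by the invertibility of $S$ only applies a priori to functions already known to belong to $X$; the monotone truncation together with the Fatou property of $X$ is exactly the mechanism that transfers that estimate to $f^*$ itself, and it is the only step requiring any care.
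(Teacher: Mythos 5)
Your proof is correct and follows essentially the same route as the paper: both reduce the claim to the lower bound $\|Sf^*\|_{\mathfrak{R}[S,X]}\geq c\,\|f^*\|_X$ coming from the invertibility of $S:X\rightarrow\mathfrak{R}[S,X]$. The only difference is that the paper applies this bound to $f^*$ directly, while you justify that step via truncations $f_n\in X$ and the Fatou property --- a detail the paper leaves implicit, and which you handle correctly.
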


\begin{proof}
As mentioned above, we always have the embedding $X\subset \mathcal{D}_X$. Now, given $f\in \mathcal{D}_X$ we have
$$
\|f\|_{\mathcal{D}_X}=\|f^*\|_{\mathfrak{D}[S,\mathfrak{R}[S,X]]}=\|Sf^*\|_{\mathfrak{R}[S,X]}.
$$
Hence, if $S:X\rightarrow \mathfrak{R}[S,X]$ is invertible, then there exists $\alpha>0$ such that
$$
\|f\|_{\mathcal{D}_X}=\|Sf^*\|_{\mathfrak{R}[S,X]}\geq\alpha\|f^*\|_X=\|f\|_X.
$$
\end{proof}

The following is  a useful criterion for the identity $\mathcal{D}_{\Lambda_\varphi}=\Lambda_\varphi$:

\begin{proposition}\label{criterioDLambda}
Let $\varphi$ be a quasi-concave function satisfying \eqref{logc}. Suppose that there exist $K\geq1$ and a decreasing function $g_\varphi$ such that for every $t>0$
$$
\frac1K \,\frac{\varphi(t)}{t} \leq SS'g_\varphi(t) \leq K \,\frac{\varphi(t)}{t}.
$$ Then $\mathcal{D}_{\Lambda_\varphi}=\Lambda_\varphi.$
\end{proposition}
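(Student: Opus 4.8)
The plan is to establish the only inclusion that is not automatic, namely $\mathcal{D}_{\Lambda_\varphi}\subseteq\Lambda_\varphi$, by testing the range norm against the single function $g_\varphi$. First note that, since $\varphi$ satisfies \eqref{logc}, Proposition~\ref{prop41} guarantees that $\mathfrak{R}[S,\Lambda_\varphi]$ exists, so that $\mathcal{D}_{\Lambda_\varphi}=\mathfrak{D}[S,\mathfrak{R}[S,\Lambda_\varphi]]$ is well defined; moreover the inclusion $\Lambda_\varphi\subseteq\mathcal{D}_{\Lambda_\varphi}$ always holds (this is part \textit{(i)} of the Proposition on $\mathcal{R}_X$ and $\mathcal{D}_X$). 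Hence it suffices to show that for every $f$
$$
\|f\|_{\Lambda_\varphi}\ \lesssim\ \|Sf^*\|_{\mathfrak{R}[S,\Lambda_\varphi]}=\|f\|_{\mathcal{D}_{\Lambda_\varphi}};
$$
equivalently, that $S:\Lambda_\varphi\to\mathfrak{R}[S,\Lambda_\varphi]$ is bounded below on decreasing functions, which is exactly the estimate exploited in the Lemma preceding this statement.

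The second step is the lower bound obtained by plugging $g=g_\varphi$ into the supremum defining the range norm. Since $Sf^*$ is decreasing, $(Sf^*)^*=Sf^*$, so
$$
\|Sf^*\|_{\mathfrak{R}[S,\Lambda_\varphi]}=\sup_{g\downarrow}\frac{\int_0^\infty Sf^*(s)\,g(s)\,ds}{\|S'g\|_{(\Lambda_\varphi)'}}\ \geq\ \frac{\int_0^\infty Sf^*(s)\,g_\varphi(s)\,ds}{\|S'g_\varphi\|_{(\Lambda_\varphi)'}}.
$$
For the denominator I would use $(\Lambda_\varphi)'=M_{t/\varphi(t)}$ together with the fact that $S'g_\varphi$ is positive and decreasing, so that $(S'g_\varphi)^{**}(t)=SS'g_\varphi(t)$; the upper bound in the hypothesis then gives
$$
\|S'g_\varphi\|_{(\Lambda_\varphi)'}=\sup_{t>0}SS'g_\varphi(t)\,\frac{t}{\varphi(t)}\leq K.
$$
For the numerator, the adjointness identity $\int_0^\infty Sf^*\cdot g_\varphi=\int_0^\infty f^*\cdot S'g_\varphi$ (Fubini) reduces matters to comparing $\int_0^\infty f^*(s)\,S'g_\varphi(s)\,ds$ with $\|f\|_{\Lambda_\varphi}=\int_0^\infty f^*(s)\,d\varphi(s)$.

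To carry out this comparison I would set $U(t)=\int_0^t S'g_\varphi(s)\,ds=t\,SS'g_\varphi(t)$, so that the hypothesis reads $\tfrac1K\varphi(t)\le U(t)\le K\varphi(t)$. Writing the decreasing function $f^*$ (after a routine truncation ensuring $f^*(\infty)=0$) as $f^*(s)=\int_{(s,\infty)}d(-f^*)(r)$ and applying Fubini yields the two identities
$$
\int_0^\infty f^*(s)\,S'g_\varphi(s)\,ds=\int_{(0,\infty)}U(r)\,d(-f^*)(r),\qquad \|f\|_{\Lambda_\varphi}=\int_{(0,\infty)}\varphi(r)\,d(-f^*)(r),
$$
and since $d(-f^*)$ is a positive measure and $U\geq\tfrac1K\varphi$ pointwise, the first integral is at least $\tfrac1K\|f\|_{\Lambda_\varphi}$. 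Combining the three displays gives $\|f\|_{\mathcal{D}_{\Lambda_\varphi}}=\|Sf^*\|_{\mathfrak{R}[S,\Lambda_\varphi]}\geq K^{-2}\|f\|_{\Lambda_\varphi}$, hence $\mathcal{D}_{\Lambda_\varphi}\subseteq\Lambda_\varphi$, which together with the trivial inclusion yields $\mathcal{D}_{\Lambda_\varphi}=\Lambda_\varphi$.

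The main obstacle I anticipate is this last step: upgrading the pointwise equivalence $U\approx\varphi$ to an equivalence of the two integrals. This is precisely the layer-cake/Fubini rewriting displayed above, and one has to be a little careful with the boundary behaviour of $f^*$ and $\varphi$ (monotone truncations of $f^*$, and the normalisation $\varphi(0^+)=0$), handled by the standard monotone-convergence arguments. Everything else is a direct computation from the definition of $\mathfrak{R}[S,\Lambda_\varphi]$, the identification $(\Lambda_\varphi)'=M_{t/\varphi(t)}$, and the duality $\int (Sh)\,g=\int h\,(S'g)$.
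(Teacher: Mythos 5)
Your proposal is correct and follows essentially the same route as the paper: both proofs reduce to bounding $\|Sf^*\|_{\mathfrak{R}[S,\Lambda_\varphi]}$ from below by testing the defining supremum with the single function $g_\varphi$, using the upper hypothesis to get $\|S'g_\varphi\|_{M_{t/\varphi(t)}}\le K$ and the lower hypothesis to compare $\int_0^\infty f^*\,S'g_\varphi$ with $\|f\|_{\Lambda_\varphi}$. The only divergence is in the bookkeeping of that last comparison: the paper applies Hardy's Lemma against the density $\phi=\varphi'$ and then treats the term $\varphi(0^+)\|f\|_\infty$ separately via the test functions $S'\chi_{(0,t)}$, whereas your layer-cake integration against $d(-f^*)$ (which is just the standard proof of Hardy's Lemma written out) absorbs the $\varphi(0^+)$ contribution automatically, provided you state the Stieltjes identity $\|f\|_{\Lambda_\varphi}=\int_{(0,\infty)}\varphi(r)\,d(-f^*)(r)$ with the convention that $\varphi(r)$ carries the jump at the origin rather than assuming $\varphi(0^+)=0$, which cannot be normalized away.
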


\begin{proof}
Since $\varphi$ is a quasi-concave function,  then it can be represented as the integral of a non-negative, decreasing function $\phi$ on $\mathbb R^+$. We have that the norm in $\Lambda_\varphi$ can be computed as follows
$$
\|f\|_{\Lambda_\varphi}=\|f\|_\infty\varphi(0^+)+\int_0^\infty f^*(s) \phi(s) ds.
$$
Note that
$$
S(\phi)(s)=\frac1s\int_0^s\phi(r)\,dr=\frac{\varphi(s)}{s}\approx   SS'g_\varphi(s).
$$
So we have that
$$
\|S'g_\varphi\|_{M_{\varphi'}}=\sup_{s>0}\frac{\int_0^sS'g_\varphi(t)dt}{\varphi(s)}\leq K\sup_{s>0}\frac{\int_0^s\phi(t)dt}{\varphi(s)}=K\sup_{s>0}\frac{\varphi(s)-\varphi(0^+)}{\varphi(s)}\leq K.
$$

Let now $f\in\mathcal{D}_{\Lambda_\varphi}$. On the one hand, using \cite[Proposition II.3.6]{Bennett-Sharpley} we have that
\begin{align*}
\|f\|_{\mathcal{D}_{\Lambda_\varphi}}&=\|Sf^*\|_{\mathfrak{R}[S,\Lambda_\varphi]}=\sup_{g\downarrow}\frac{\int_0^\infty f^*(s) S'g(s)\,ds}{\|S'g\|_{M_{\varphi'}}}\\
&\geq \frac{\int_0^\infty f^*(s) S'g_\varphi(s)\,ds}{\|S'g_\varphi\|_{M_{\varphi'}}}\geq \frac1K^2 \int_0^\infty f^*(s) \phi(s)\,ds.\\
\end{align*}

On the other hand, if $\varphi(0^+)\neq0$, then for $t>0$ we have
\begin{align*}
\|S'\chi_{(0,t)}\|_{M_{\varphi'}}&=\sup_{s>0}\frac1{\varphi(s)}\int_0^s\chi_{(0,t)}(r)\log\Big(\frac{t}{r}\Big)\,dr\\
&=\max\{\sup_{s<t}\frac1{\varphi(s)}\int_0^s\log\Big(\frac{t}{r}\Big)\,dr, \sup_{s>t}\frac1{\varphi(s)}\int_0^t\log\Big(\frac{t}{r}\Big)\,dr\}\\
&\leq\frac{t}{\varphi(0^+)}.
\end{align*}
Hence, we also have
\begin{align*}
\|f\|_{\mathcal{D}_{\Lambda_\varphi}}&\geq\sup_{t>0}\frac{\int_0^\infty f^*(s) S'\chi_{(0,t)}(s)\,ds}{\|S'\chi_{(0,t)}\|_{M_{\varphi'}}}\geq \sup_{t>0}\frac{\int_0^t Sf^*(s)\,ds}{t/\varphi(0^+)} \\
&\geq \varphi(0^+)\sup_{t>0}\frac{\int_0^t f^*(s)\,ds}{t}\geq\varphi(0^+)\|f\|_\infty.
\end{align*}

Therefore, summing the two estimates we get
$$
\|f\|_{\mathcal{D}_{\Lambda_\varphi}}\gtrsim \varphi(0^+)\|f\|_\infty+\int_0^\infty f^*(s) \phi(s)\,ds=\|f\|_{\Lambda_\varphi}.
$$

Since the converse inequality
$$
\|f\|_{\mathcal{D}_{\Lambda_\varphi}}\leq\|f\|_{\Lambda_\varphi}
$$
always holds, the proof is finished.
\end{proof}

\begin{example}\label{DLambda fi alpha}
Let $\varphi_\alpha(t)=t\log^\alpha(1+t^{-1/\alpha})$, for $\alpha\geq1$. We claim that $\mathcal D_{\Lambda_{\varphi_{\alpha+1}}}=\Lambda_{\varphi_{\alpha+1}}$.

Indeed, we will show that 
$$
\frac{\varphi_{\alpha+1}(t)}{t}\approx SS'\varphi_\alpha (t),
$$
and the conclusion follows by Proposition~\ref{criterioDLambda}. In fact, in Example~\ref{rango fi alfa} we showed that for $t<1$, it holds that $SS'\varphi_\alpha(t)\lesssim\frac{\varphi_{\alpha+1}(t)}{t}$, while for $t>1$ we have $\frac{\varphi_{\alpha+1}(t)}{t}\approx SS'\varphi_\alpha (t)$. Thus, it remains to show that $SS'\varphi_\alpha(t)\gtrsim\frac{\varphi_{\alpha+1}(t)}{t}$ also holds for $t<1$. 

To this end, note that for $0<s<1$, we have $s^{\frac1\alpha}\leq s^{\frac{1}{\alpha+1}}$, and we get

\begin{equation*}
\frac1s\log^\alpha \Big(1+\frac{1}{s^{1/\alpha}}\Big)\geq\frac1s\frac{1}{(1+s^{1/(\alpha+1)})}\log^\alpha \Big(1+\frac{1}{s^{1/(\alpha+1)}}\Big)
\end{equation*}

Now, for $t<1$ we have 
\begin{align*}
SS'\varphi_\alpha(t)&=\int_t^\infty \frac{\varphi_\alpha(s)}{s^2}\,ds=\int_t^\infty\frac1s\log^\alpha\Big(1+\frac1{s^{1/\alpha}}\Big)\,ds\\
&\approx 1+ \int_t^1\frac1s\log^\alpha\Big(1+\frac1{s^{1/\alpha}}\Big)\,ds \\
&\geq 1+ \int_t^1 \frac1s\frac{1}{(1+s^{1/(\alpha+1)})}\log^\alpha (1+\frac{1}{s^{1/(\alpha+1)}})\,ds\geq\frac{\varphi_{\alpha+1}(t)}{t},
\end{align*}
where we used that $\varphi_{\alpha+1}(t)/t$ is a primitive of the last integrand.

Observe that, by the computations in Example~\ref{rango fi alfa} we have  that $\mathfrak R[S,\Lambda_{\varphi_{\alpha+1}}]=\Lambda_{\varphi_\alpha}$. Now, we have seen that $\mathcal D_{\Lambda_{\varphi_{\alpha+1}}}=\Lambda_{\varphi_{\alpha+1}}$, so we also obtain that
$$
\mathfrak D[S,\Lambda_{\varphi_\alpha}]=\mathfrak D[S,\mathfrak R[S,\Lambda_{\varphi_{\alpha+1}}]]=\mathcal D_{\Lambda_{\varphi_{\alpha+1}}}=\Lambda_{\varphi_{\alpha+1}}.
$$
\end{example}

The situation exhibited in Example~\ref{DLambda fi alpha} can be somehow extrapolated. Observe that in that case we have that $\widetilde{\varphi_{\alpha+1}}=\varphi_\alpha$, and the following identities hold
$$
\mathfrak R[S,\Lambda_{\varphi_{\alpha+1}}]=\Lambda_{\widetilde{\varphi_{\alpha+1}}}\quad\text{and}\quad\mathfrak D[S,\Lambda_{\widetilde{\varphi_{\alpha+1}}}]=\Lambda_{\varphi_{\alpha+1}}.
$$

This means that the r.i. optimal range for the Hardy operator on certain Lorentz spaces is again a Lorentz space, and the same happens with the r.i. optimal domain. We already know, by Proposition~\ref{optimal range lorentz lorentz}, when $\mathfrak R[S,\Lambda_\varphi]=\Lambda_{\widetilde{\varphi}}$. Let us study the analogous result for the optimal domain.

\begin{proposition}\label{Lorentz-Lorentz}
Let $\varphi$ be a quasi-concave function satisfying \eqref{logc}. The following are equivalent:
\begin{enumerate}[(i)]
\item $\mathfrak R[S,\Lambda_\varphi]=\Lambda_{\widetilde{\varphi}}$ and $\mathfrak D[S,\Lambda_{\widetilde{\varphi}}]=\Lambda_\varphi$.
\item For $t>0$,
$$
\int_t^\infty\frac{\widetilde{\varphi}(s)}{s^2}\,ds\approx\frac{\varphi(t)}{t}.
$$
\end{enumerate}
\end{proposition}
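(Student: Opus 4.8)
The plan is to deduce both implications from Proposition~\ref{optimal range lorentz lorentz}, which already handles the range identity, together with Proposition~\ref{criterioDLambda}, which gives a sufficient condition for the domain identity $\mathcal D_{\Lambda_\varphi}=\Lambda_\varphi$; the one new computation needed is the fundamental function of $\mathfrak D[S,\Lambda_{\widetilde\varphi}]$. First I would record that, since $S\chi_{(0,t)}(s)=\min(1,t/s)$ is already decreasing,
$$
\varphi_{\mathfrak D[S,\Lambda_{\widetilde\varphi}]}(t)=\|S\chi_{(0,t)}\|_{\Lambda_{\widetilde\varphi}}=\int_0^td\widetilde\varphi(s)+t\int_t^\infty\frac{d\widetilde\varphi(s)}{s}=t\int_t^\infty\frac{\widetilde\varphi(s)-\widetilde\varphi(0^+)}{s^2}\,ds,
$$
the last equality by an integration by parts (the tail integral being finite precisely when $\mathfrak D[S,\Lambda_{\widetilde\varphi}]$ is a well-defined r.i.\ space). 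When $\varphi(0^+)=0$, hence $\widetilde\varphi(0^+)=0$, this reads $\varphi_{\mathfrak D[S,\Lambda_{\widetilde\varphi}]}(t)\approx t\int_t^\infty\widetilde\varphi(s)s^{-2}\,ds$; the general case is reduced to this by splitting off the $L^\infty$-component exactly as in the proof of Proposition~\ref{criterioDLambda}.

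For the implication $(i)\Rightarrow(ii)$ I would simply observe that $\mathfrak D[S,\Lambda_{\widetilde\varphi}]=\Lambda_\varphi$ forces equivalence of the respective fundamental functions, so the displayed formula gives $\varphi(t)\approx t\int_t^\infty\widetilde\varphi(s)s^{-2}\,ds$, i.e.\ $\varphi(t)/t\approx\int_t^\infty\widetilde\varphi(s)s^{-2}\,ds$, which is (ii). (If one prefers to use both hypotheses, the inequality $\int_t^\infty\widetilde\varphi(s)s^{-2}\,ds\lesssim\varphi(t)/t$ is exactly what Proposition~\ref{optimal range lorentz lorentz} extracts from $\mathfrak R[S,\Lambda_\varphi]=\Lambda_{\widetilde\varphi}$, and the reverse inequality comes from the fundamental function identity applied to the second hypothesis.)

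For $(ii)\Rightarrow(i)$: condition (ii) contains $\int_t^\infty\widetilde\varphi(s)s^{-2}\,ds\lesssim\varphi(t)/t$, so Proposition~\ref{optimal range lorentz lorentz} gives $\mathfrak R[S,\Lambda_\varphi]=\Lambda_{\widetilde\varphi}$; hence $\mathfrak D[S,\Lambda_{\widetilde\varphi}]=\mathfrak D[S,\mathfrak R[S,\Lambda_\varphi]]=\mathcal D_{\Lambda_\varphi}$, and it remains only to show $\mathcal D_{\Lambda_\varphi}=\Lambda_\varphi$. For this I would apply Proposition~\ref{criterioDLambda} with $g_\varphi$ the decreasing density of the least concave majorant $\overline{\widetilde\varphi}$ of $\widetilde\varphi$, so that $\int_0^tg_\varphi(s)\,ds=\overline{\widetilde\varphi}(t)-\overline{\widetilde\varphi}(0^+)\approx\widetilde\varphi(t)$. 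Then the same integration by parts gives
$$
SS'g_\varphi(t)=\frac1t\int_0^tg_\varphi(s)\,ds+\int_t^\infty\frac{g_\varphi(s)}{s}\,ds\approx\int_t^\infty\frac{\widetilde\varphi(s)}{s^2}\,ds\approx\frac{\varphi(t)}{t},
$$
the last step being (ii); thus $g_\varphi$ witnesses the hypothesis of Proposition~\ref{criterioDLambda}, so $\mathcal D_{\Lambda_\varphi}=\Lambda_\varphi$, and $(i)$ follows.

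I expect the main obstacle to be the chain of equivalences proving $SS'g_\varphi(t)\approx\varphi(t)/t$: one must pass from $\widetilde\varphi$ to its least concave majorant to get two-sided control of $\int_0^tg_\varphi$, carry out the integration by parts in $\int_t^\infty s^{-1}\,d\widetilde\varphi(s)$ so that the boundary term cancels the summand $\widetilde\varphi(t)/t$, and use that under (ii) the tail $\int_t^\infty\widetilde\varphi(s)s^{-2}\,ds$ both converges and is comparable to $\widetilde\varphi(t)/t$ (the latter forced since $\int_t^\infty\widetilde\varphi(s)s^{-2}\,ds\geq\widetilde\varphi(t)/t$ always); the bookkeeping of the boundary terms when $\varphi(0^+)\neq0$ is an additional technicality, handled exactly as in Proposition~\ref{criterioDLambda}.
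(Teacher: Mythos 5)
Your route is genuinely different from the paper's, which disposes of both implications by quoting the two weighted-Hardy characterizations of \cite[Theorems 4.1 and 5.1]{CPSS}: the range identity is Proposition~\ref{optimal range lorentz lorentz}, and (given that $S:\Lambda_\varphi\to\Lambda_{\widetilde\varphi}$ is bounded, so that $\Lambda_\varphi\subset\mathfrak D[S,\Lambda_{\widetilde\varphi}]$ automatically) the remaining embedding $\mathfrak D[S,\Lambda_{\widetilde\varphi}]\subset\Lambda_\varphi$ is equivalent to $\int_0^\infty f^{**}\,d\widetilde\varphi\gtrsim\int_0^\infty f^*\,d\varphi$, hence to $\int_t^\infty\widetilde\varphi(s)s^{-2}\,ds\gtrsim\varphi(t)/t$, \emph{in both directions at once}. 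Your computation of the fundamental function is essentially right --- in fact $\varphi_{\mathfrak D[S,\Lambda_{\widetilde\varphi}]}(t)=\|S\chi_{(0,t)}\|_{\Lambda_{\widetilde\varphi}}=t\int_t^\infty\widetilde\varphi(s)s^{-2}\,ds$ exactly, with no $\widetilde\varphi(0^+)$ correction, once the jump of $\widetilde\varphi$ at the origin is charged as $\widetilde\varphi(0^+)\|\cdot\|_\infty$ in the norm --- and it yields a clean, self-contained proof of $(i)\Rightarrow(ii)$.

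The gap is in $(ii)\Rightarrow(i)$ when $\varphi(0^+)>0$. One checks from the definition of $\widetilde\varphi$ that $\widetilde\varphi(0^+)=\varphi(0^+)$, so in this case no decreasing, locally integrable $g_\varphi$ can satisfy $\int_0^t g_\varphi(s)\,ds\approx\widetilde\varphi(t)$ near the origin (the left side tends to $0$). Worse, the hypothesis of Proposition~\ref{criterioDLambda} is then unattainable by \emph{any} choice of $g$: writing $G(t)=\int_0^tg$, one has $t\,SS'g(t)=G(t)+t\int_t^\infty g(s)s^{-1}\,ds\to0$ as $t\to0^+$ (split the integral at $\sqrt t$), whereas the required lower bound forces $t\,SS'g(t)\geq\frac1K\varphi(t)\geq\frac1K\varphi(0^+)>0$. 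A concrete instance is $\varphi\equiv1$, i.e.\ $\Lambda_\varphi=\Lambda_{\widetilde\varphi}=L^\infty$: condition \eqref{logc} and condition $(ii)$ hold, and $(i)$ is true, but Proposition~\ref{criterioDLambda} simply does not apply. So your closing remark that the boundary terms are ``handled exactly as in Proposition~\ref{criterioDLambda}'' does not rescue the argument: that proposition deals with $\varphi(0^+)\neq0$ only \emph{after} assuming the existence of $g_\varphi$, which is precisely what fails here. Your proof is complete under the extra assumption $\varphi(0^+)=0$; to cover $\varphi(0^+)>0$ you need either the paper's appeal to \cite[Theorem 5.1]{CPSS} or a separate treatment of the $L^\infty$-component of $\Lambda_\varphi$.
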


\begin{proof}
By Proposition~\ref{optimal range lorentz lorentz}, we already have that $\mathfrak R[S,\Lambda_\varphi]=\Lambda_{\widetilde{\varphi}}$ is equivalent to 
$$
\int_t^\infty\frac{\widetilde{\varphi}(s)}{s^2}\,ds\lesssim\frac{\varphi(t)}{t}.
$$
To finish the proof, we can assume that $S:\Lambda_\varphi\rightarrow \Lambda_{\widetilde{\varphi}}$ is bounded. Using this, now we have that $\mathfrak D[S,\Lambda_{\widetilde{\varphi}}]=\Lambda_\varphi$ holds if and only if $\mathfrak D[S,\Lambda_{\widetilde{\varphi}}]\subset\Lambda_\varphi$. This is equivalent to the estimate
$$
\int_0^\infty f^{**}(s) d\widetilde\varphi(s) \gtrsim \int_0^\infty f^*(s) d\varphi(s),
$$
and using \cite[Theorem 5.1]{CPSS} this holds if and only if
$$
\int_t^\infty \frac{\widetilde{\varphi}(s)}{s^2}\,ds\gtrsim \frac{\varphi(t)}{t}.
$$
\end{proof}

In particular, if a quasi-concave function $\varphi$ satisfies condition $(ii)$ in Proposition~\ref{Lorentz-Lorentz}, then $\mathcal D_{\Lambda_\varphi}=\Lambda_\varphi$. However, the converse is not true:

\begin{example}
Let us consider the function $\varphi(t)=\max\{1,t\}$. It is clear that $\Lambda_\varphi=L^1\cap L^\infty$. Now, it is easy to check that 
$$
\widetilde{\varphi}(t)=\frac{t}{\log(1+t)},
$$
$\mathfrak R[S,\Lambda_\varphi]=M_{\widetilde{\varphi}}$ and $\mathfrak D[S,M_{\widetilde{\varphi}}]=\Lambda_\varphi$. Thus, we have $\mathcal D_{\Lambda_\varphi}=\Lambda_\varphi$, while
$$
\int_t^\infty\frac{\widetilde{\varphi}(s)}{s^2}\,ds=\int_t^\infty\frac{1}{s\log(1+s)}\,ds=\infty.
$$

\end{example}

\section{Revisiting the spaces $R(X)$}\label{sec5}

Let us study the connection between the space $\mathfrak{D}[S,X]$ and the restricted type spaces $R(X)$. These spaces were introduced in \cite{Soria:10} in connection with the best constant for the Hardy operator minus the identity on decreasing functions and their properties have been studied in \cite{Boza-Soria,Rodriguez-Soria,Soria-Tradacete}.

By definition, the space $R(X)$ is the Lorentz space associated to the function
$$
W_X(t)=\big\|\frac{1}{1+(\frac{\cdot}{t})}\Big\|_X.
$$
Now, it is easy to see that this function is actually the fundamental function of the space $\mathfrak{D}[S,X]$:
$$
\varphi_{\mathfrak{D}[S,X]}(t)=\|S\chi_{(0,t)}\|_X=W_X(t).
$$
Hence, we have the identity
$$
R(X)=\Lambda(\mathfrak{D}[S,X]).
$$

 It was studied in \cite{Soria-Tradacete} whether every quasi-concave function satisfying that $\varphi(t)\geq C t\log(1+1/t)$ has the property that there exists an r.i. space $X$ with $R(X)=\Lambda_\varphi$. Although it is not known whether $\mathcal{D}_X=X$, for every r.i.\ space $X$,  a weaker version of this identity is actually equivalent to the previous question. 

\begin{theorem}\label{caracR(X)}
Let $\varphi$ be a quasi-concave function such that $\varphi(t)\geq Ct\log(1+1/t)$. The following are equivalent: 
\begin{enumerate}[{(i)}]
\item There exists an r.i. space $X$ such that $\Lambda_\varphi=R(X).$
\item $\Lambda_\varphi=R(\mathfrak R[S,\Lambda_\varphi])$.
\item The fundamental function of $\mathcal{D}_{\Lambda_\varphi}$ is equivalent to $\varphi$.
\item There is $K>0$ such that for every $t>0$, there exists a decreasing function $g_t$ satisfying
		\begin{enumerate}
		\item $SS'g_t(s)\leq K\varphi(s)/s$ for $s>0$; and
		\item $SS'g_t(t)\geq \frac1K \varphi(t)/t$.
		\end{enumerate}
\end{enumerate}
\end{theorem}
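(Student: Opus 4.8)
The plan is to run the cycle $(i)\Rightarrow(iii)\Rightarrow(ii)\Rightarrow(i)$ and, separately, to establish $(iii)\Leftrightarrow(iv)$. Throughout, the hypothesis $\varphi(t)\ge Ct\log(1+1/t)$ guarantees (via Proposition~\ref{prop41}) that $\mathfrak R[S,\Lambda_\varphi]$ exists and contains $S\chi_{(0,1)}=\tfrac{1}{1+\cdot}$, hence that $\mathcal D_{\Lambda_\varphi}=\mathfrak D[S,\mathfrak R[S,\Lambda_\varphi]]$ is well defined. The implication $(ii)\Rightarrow(i)$ is then immediate: take $X=\mathfrak R[S,\Lambda_\varphi]$.

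For $(ii)\Leftrightarrow(iii)$ I would use the identity $R(Y)=\Lambda(\mathfrak D[S,Y])$ recalled at the start of Section~\ref{sec5}, which gives $R(\mathfrak R[S,\Lambda_\varphi])=\Lambda(\mathfrak D[S,\mathfrak R[S,\Lambda_\varphi]])=\Lambda(\mathcal D_{\Lambda_\varphi})$. Thus $(ii)$ says exactly $\Lambda_\varphi=\Lambda(\mathcal D_{\Lambda_\varphi})$, and since two Lorentz spaces coincide precisely when their (quasiconcave) fundamental functions are equivalent, this is the same as $\varphi_{\mathcal D_{\Lambda_\varphi}}\approx\varphi$, i.e.\ $(iii)$.

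The core step is $(i)\Rightarrow(iii)$. If $\Lambda_\varphi=R(X)=\Lambda(\mathfrak D[S,X])$, then $\varphi\approx\varphi_{\mathfrak D[S,X]}$; since a Lorentz space is the smallest r.i.\ space with its fundamental function, $\Lambda_\varphi\subset\mathfrak D[S,X]$. Applying monotonicity of the optimal range (if $Y_1\subset Y_2$ then $\mathfrak R[S,Y_1]\subset\mathfrak R[S,Y_2]$) and then the embedding $\mathcal R_X\subset X$ from the basic properties of $\mathcal R_X$ and $\mathcal D_X$, we obtain
$$\mathfrak R[S,\Lambda_\varphi]\subset\mathfrak R[S,\mathfrak D[S,X]]=\mathcal R_X\subset X.$$
Monotonicity of the optimal domain then yields $\mathcal D_{\Lambda_\varphi}=\mathfrak D[S,\mathfrak R[S,\Lambda_\varphi]]\subset\mathfrak D[S,X]$, so comparing fundamental functions gives $\varphi_{\mathcal D_{\Lambda_\varphi}}\gtrsim\varphi_{\mathfrak D[S,X]}\approx\varphi$. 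The reverse bound $\varphi_{\mathcal D_{\Lambda_\varphi}}\le\varphi$ always holds because $\Lambda_\varphi\subset\mathcal D_{\Lambda_\varphi}$ with embedding constant one. Hence $(iii)$, and the cycle $(i)\Rightarrow(iii)\Rightarrow(ii)\Rightarrow(i)$ closes.

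Finally, for $(iii)\Leftrightarrow(iv)$ I would compute $\varphi_{\mathcal D_{\Lambda_\varphi}}$ explicitly. Since the fundamental function of $\mathfrak D[S,Y]$ is $t\mapsto\|S\chi_{(0,t)}\|_Y$ and $S\chi_{(0,t)}$ is decreasing, Proposition~\ref{prop41}, the adjunction $\int_0^\infty S\chi_{(0,t)}(r)g(r)\,dr=\int_0^t S'g(r)\,dr=t\,SS'g(t)$, and the identity $\|S'g\|_{(\Lambda_\varphi)'}=\sup_{s>0}s\,SS'g(s)/\varphi(s)$ for decreasing $g$ give
$$\varphi_{\mathcal D_{\Lambda_\varphi}}(t)=\|S\chi_{(0,t)}\|_{\mathfrak R[S,\Lambda_\varphi]}=\sup_{g\downarrow}\frac{t\,SS'g(t)}{\sup_{s>0}\big(s\,SS'g(s)/\varphi(s)\big)}.$$
Choosing $s=t$ in the denominator reads off $\varphi_{\mathcal D_{\Lambda_\varphi}}(t)\le\varphi(t)$. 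If $(iv)$ holds with constant $K$, inserting $g=g_t$ makes the numerator at least $\varphi(t)/K$ and the denominator at most $K$, so $\varphi_{\mathcal D_{\Lambda_\varphi}}(t)\ge\varphi(t)/K^2$, which is $(iii)$. Conversely, if $\varphi_{\mathcal D_{\Lambda_\varphi}}(t)\ge c\,\varphi(t)$, then for each $t$ pick a decreasing $g$ realizing at least $\tfrac c2\varphi(t)$ in the supremum, normalize it so that $\sup_{s>0}s\,SS'g(s)/\varphi(s)=1$, and verify that it serves as $g_t$ in $(iv)$ with $K=\max(1,2/c)$. The main obstacle is not any single estimate but keeping the interplay of the two constructions and their fundamental functions straight; the one point needing care is that an admissible competitor $g$ has $\sup_{s>0}s\,SS'g(s)/\varphi(s)$ finite and nonzero, so that the normalization in $(iii)\Rightarrow(iv)$ is legitimate.
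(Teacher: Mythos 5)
Your proposal is correct, and for the equivalences $(ii)\Leftrightarrow(iii)$ and $(iii)\Leftrightarrow(iv)$ it follows the paper's argument essentially verbatim: the identity $R(\mathfrak R[S,\Lambda_\varphi])=\Lambda(\mathcal D_{\Lambda_\varphi})$ for the former, and the computation $\varphi_{\mathcal D_{\Lambda_\varphi}}(t)=\sup_{g\downarrow}\int_0^tS'g(s)\,ds\big/\|S'g\|_{M_{\varphi'}}$ together with the normalization of $g_t$ for the latter (the paper normalizes so that $\|S'g_t\|_{M_{\varphi'}}=K^{1/2}$, you normalize to $1$; the same finiteness/nonvanishing caveat you flag is implicit there too). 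The genuine difference is your treatment of $(i)$: the paper simply cites \cite{Soria-Tradacete} for $(i)\Leftrightarrow(ii)$, whereas you prove $(i)\Rightarrow(iii)$ directly from the machinery of Section~\ref{sec4}, via
$$
\mathfrak R[S,\Lambda_\varphi]\subset\mathfrak R[S,\mathfrak D[S,X]]=\mathcal R_X\subset X,
\qquad\text{hence}\qquad
\mathcal D_{\Lambda_\varphi}\subset\mathfrak D[S,X],
$$
using that $\varphi\approx\varphi_{\mathfrak D[S,X]}$ forces $\Lambda_\varphi\subset\mathfrak D[S,X]$ (minimality of the Lorentz space), the monotonicity of $\mathfrak R[S,\cdot]$ and $\mathfrak D[S,\cdot]$, and the embedding $\mathcal R_X\subset X$. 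Comparing fundamental functions then gives $\varphi\approx\varphi_{\mathfrak D[S,X]}\lesssim\varphi_{\mathcal D_{\Lambda_\varphi}}\leq\varphi$. This closes the cycle $(i)\Rightarrow(iii)\Rightarrow(ii)\Rightarrow(i)$ without any external reference, which makes the theorem self-contained within the paper; the cost is only the tacit observation (which you should state) that hypothesis $(i)$ already forces $S\chi_{(0,1)}\in X$, so that $\mathfrak D[S,X]$ and $\mathcal R_X$ exist, and the standard fact that equivalent quasiconcave functions generate the same Lorentz space. Both points are harmless, so your argument stands as a valid and slightly stronger alternative.
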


\begin{proof}
Recall first that the boundedness of $S:\Lambda_\varphi\rightarrow L^1+L^\infty$ is equivalent to the existence of the optimal range $\mathfrak{R}[S,\Lambda_\varphi]$, and also equivalent to the inequality $\varphi(t)\geq Ct\log(1+1/t)$.

The equivalence \textit{(i)} $\Leftrightarrow$ \textit{(ii)}  has already been established in \cite{Soria-Tradacete}.

The equivalence \textit{(ii)} $\Leftrightarrow$ \textit{(iii)} is a consequence of  the following chain of identities:
$$
R(\mathfrak{R}[S,\Lambda_\varphi])=\Lambda(\mathfrak{D}[S,\mathfrak{R}[S,\Lambda_\varphi]])=\Lambda(\mathcal{D}_{\Lambda_\varphi}).
$$
Hence, if $\Lambda_\varphi=R(X)$, then we also have that $\Lambda_\varphi=R(\mathfrak{R}[S,\Lambda_\varphi])$, and by the above identity, we get that $\varphi\approx \varphi_{\mathcal{D}_{\Lambda_\varphi}}$. Conversely, if $\varphi\approx \varphi_{\mathcal{D}_{\Lambda_\varphi}}$, then $\Lambda_\varphi=\Lambda(\mathcal{D}_{\Lambda_\varphi})=R(\mathfrak{R}[S,\Lambda_\varphi])$.

Finally, for the equivalence \textit{(iii)} $\Leftrightarrow$ \textit{(iv)}, note that
$$
\varphi_{\mathcal{D}_{\Lambda_\varphi}}(t)=\|\chi_{[0,t]}\|_{\mathcal{D}_{\Lambda_\varphi}}=\sup_{g\downarrow}\frac{\int_0^t S'g(s)\,ds}{\|S'g\|_{M_{\varphi'}}}.
$$
The inequality $\varphi_{\mathcal{D}_{\Lambda_\varphi}}(t)\leq\varphi(t)$ always holds. Therefore, condition \textit{(iii)}  is equivalent to the existence of a constant $K>0$ such that $\varphi \leq K \varphi_{\mathcal{D}_{\Lambda_\varphi}}$. This holds if and only if for every $t>0$, there is a decreasing function $g_t$ such that 
$$
\varphi(t)\leq K\frac{\int_0^t S'g_t(s)\,ds}{\|S'g_t\|_{M_{\varphi'}}}.
$$
By scaling, we can assume that $\|S'g_t\|_{M_{\varphi'}}=K^{1/2}$ (or equivalently   $SS'g_t(s)\leq K^{1/2}\varphi(s)/s$, for $s>0$), so we must have $SS'g_t(t)\geq K^{-1/2}\varphi(t)/t$.
\end{proof}

\begin{corollary}
Suppose  there exists a decreasing function $g_\varphi$ such that 
$$
\frac{\varphi(t)}{t}\approx SS'g_\varphi(t),
$$ 
then $R(\mathfrak{R}[S,\Lambda_\varphi])=\Lambda_\varphi.$
\end{corollary}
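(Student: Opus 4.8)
The plan is to deduce this directly from Theorem~\ref{caracR(X)} by verifying its condition \textit{(iv)} with the \emph{constant} choice $g_t=g_\varphi$ for every $t>0$. The hypothesis $\varphi(t)/t\approx SS'g_\varphi(t)$ supplies constants $0<c_1\le c_2$ with $c_1\,\varphi(t)/t\le SS'g_\varphi(t)\le c_2\,\varphi(t)/t$ for all $t>0$; setting $K=\max\{c_2,c_1^{-1}\}$ makes both requirements of \textit{(iv)} hold at once, namely $SS'g_\varphi(s)\le K\varphi(s)/s$ for every $s>0$ (so (a) holds for all $t$) and $SS'g_\varphi(t)\ge K^{-1}\varphi(t)/t$ (so (b) holds at each $t$). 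Then the implication \textit{(iv)}$\Rightarrow$\textit{(ii)} of Theorem~\ref{caracR(X)} yields exactly $\Lambda_\varphi=R(\mathfrak R[S,\Lambda_\varphi])$.

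The one point that is not completely immediate is that Theorem~\ref{caracR(X)} carries the standing assumption \eqref{logc}, namely $\varphi(t)\geq Ct\log(1+1/t)$, and I would first check that this is forced by the hypothesis. We may assume $g_\varphi\not\equiv0$, so by monotonicity there are $\delta,s_0>0$ with $g_\varphi\geq\delta$ on $(0,s_0)$; hence $S'g_\varphi(s)=\int_s^\infty g_\varphi(r)r^{-1}\,dr\geq\delta\log(s_0/s)$ for $0<s<s_0$, and integrating once more,
\begin{equation*}
SS'g_\varphi(t)=\frac1t\int_0^t S'g_\varphi(s)\,ds\geq\delta\big(\log(s_0/t)+1\big)\gtrsim\log(1+1/t)\qquad(0<t<s_0),
\end{equation*}
while for $t\geq s_0$ one has $SS'g_\varphi(t)\geq t^{-1}\int_0^{s_0}S'g_\varphi(s)\,ds\gtrsim 1/t\approx\log(1+1/t)$. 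Since on any compact subinterval of $(0,\infty)$ both sides stay between positive constants, combining these gives $SS'g_\varphi(t)\gtrsim\log(1+1/t)$ for all $t>0$, whence $\varphi(t)/t\approx SS'g_\varphi(t)\gtrsim\log(1+1/t)$, which is \eqref{logc}.

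With \eqref{logc} in hand the argument of the first paragraph applies verbatim and the corollary follows; there is no genuine obstacle, since the hypothesis is precisely a uniform ($t$-independent) form of condition \textit{(iv)} and all the work is already done in the chain \textit{(ii)}$\Leftrightarrow$\textit{(iii)}$\Leftrightarrow$\textit{(iv)} of Theorem~\ref{caracR(X)}. Alternatively one could route through \textit{(iii)}: the hypothesis is exactly the assumption of Proposition~\ref{criterioDLambda}, so $\mathcal D_{\Lambda_\varphi}=\Lambda_\varphi$, in particular $\varphi_{\mathcal D_{\Lambda_\varphi}}\approx\varphi$, and then \textit{(iii)}$\Rightarrow$\textit{(ii)} of Theorem~\ref{caracR(X)} gives the same conclusion.
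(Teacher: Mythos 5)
Your proof is correct and essentially coincides with the paper's, which simply cites Proposition~\ref{criterioDLambda} together with Theorem~\ref{caracR(X)} --- exactly the alternative route you give in your last sentence; your primary route via condition \textit{(iv)} with the constant choice $g_t=g_\varphi$ is just the same theorem entered through a different node of its equivalence chain. Your additional check that the hypothesis forces \eqref{logc} is a worthwhile point that the paper leaves implicit.
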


\begin{proof}
This is a direct consequence of Proposition~\ref{criterioDLambda} and Theorem \ref{caracR(X)}.
\end{proof}

\end{document}